\newcommand{\LoadPackagesNow}{}
\newcommand{\LoadPackageLater}[1]{%
   \g@addto@macro{\LoadPackagesNow}{%
      \usepackage{#1}%
   }%
}
\definecolor{pdfurlcolor}{rgb}{0,0,0.6}
\definecolor{pdffilecolor}{rgb}{0.7,0,0}
\definecolor{pdflinkcolor}{rgb}{0,0,0.6}
\definecolor{pdfcitecolor}{rgb}{0,0,0.6}
\g@addto@macro\bfseries{\boldmath}
\newcommand{\ifargdef}[3][{}]{\ifthenelse{\equal{#2}{}}{#1}{#3}}
\newenvironment{properties}[1][{}]
{\begin{enumerate}[label={\textsc{(#1\arabic*)}},itemindent=2em]}
{\end{enumerate}} 
\newenvironment{proofsteps}[1]
{\begin{enumerate}[label={\textsc{(#1\arabic*)}},itemindent=4em,leftmargin=0em]}
{\end{enumerate}} 
\newenvironment{romanlist}
{\begin{enumerate}[label=\upshape(\roman*),itemindent=2em,leftmargin=0pt]}
{\end{enumerate}}
\newtheoremstyle{claim}
	{\topsep}{\topsep}%
	{\itshape}
	{}
	{\bfseries\boldmath}
	{}
	{.5em}
	{\thmname{#1} \thmnumber{#2} \thmnote{(#3)}}
\newtheoremstyle{definition}
	{\topsep}{\topsep}%
	{}
	{}
	{}
	{}
	{.5em}
	{{\bfseries\thmname{#1} \thmnumber{#2}} \thmnote{(#3)}}
\newtheoremstyle{remark}
	{\topsep}{\topsep}%
	{}
	{}
	{}
	{}
	{.5em}
	{{\itshape\thmname{#1}:}}
\declaretheorem[style=claim,numberwithin=section]{theorem}
\declaretheorem[style=claim,sibling=theorem]{proposition}
\declaretheorem[style=claim,sibling=theorem]{lemma}
\declaretheorem[style=claim,sibling=theorem]{corollary}
\declaretheorem[style=definition,sibling=theorem]{definition}
\declaretheorem[style=definition,sibling=theorem]{algorithm}
\newcommand{\opleft}[1]{\mathopen{}\left#1}
\newcommand{\opright}[1]{\right#1\mathclose{}}
\newcommandx{\braces}[4]{%
\ifstrequal{#3}{normal}{#1#4#2}{%
\ifstrequal{#3}{auto}{\left#1#4\right#2}{%
\ifstrequal{#3}{opauto}{\opleft#1#4\opright#2}{%
#3#1#4#3#2}}}%
}
\newcommandx{\opannot}[3][3=\downarrow]{\stackrel{\mathclap{\substack{#1 \\ #3 \vspace{2pt}}}}{#2}}
\newcommandx{\lineannot}[3][3=\rightarrow]{\mathllap{\boxed{\text{\textsmaller{#1}}} #3} #2}
\newcommandx{\multilineannot}[4][4=\rightarrow]{\mathllap{\boxed{\parbox{#1}{\RaggedRight\textsmaller{#2}}} #4} #3}
\newcommand{\N}{\mathbb{N}} 
\newcommand{\Nzero}{\mathbb{N}_0} 
\newcommand{\Z}{\mathbb{Z}} 
\newcommand{\R}{\mathbb{R}} 
\newcommand{\eps}{\varepsilon} 
\renewcommand{\iff}{\Leftrightarrow} 
\renewcommand{\implies}{\Rightarrow} 
\newcommand{\suchthat}[1][normal]{\ifstrequal{#1}{normal}{\mid}{#1|}} 
\newcommand{\setcompl}[1]{#1^c} 
\newcommand{\cardinality}{\#} 
\newcommand{\union}{\cup} 
\newcommand{\bigunion}{\bigcup} 
\newcommand{\boundary}[1]{\partial#1} 
\newcommandx{\intvcl}[3][1=normal]{\braces{[}{]}{#1}{#2, #3}} 
\newcommandx{\intvop}[3][1=normal]{\braces{(}{)}{#1}{#2, #3}} 
\newcommandx{\intvclop}[3][1=normal]{\braces{[}{)}{#1}{#2, #3}} 
\newcommandx{\intvopcl}[3][1=normal]{\braces{(}{]}{#1}{#2, #3}} 
\newcommand{\dotarg}{\ensuremath{\raisebox{.15ex}{$\scriptstyle [\cdot]$}}} 
\DeclareMathOperator*{\argmin}{argmin} 
\newcommandx{\abs}[2][1=normal]{\braces{\lvert}{\rvert}{#1}{#2}} 
\newcommand{\conj}[1]{\overline{#1}} 
\newcommandx{\ceil}[2][1=normal]{\braces{\lceil}{\rceil}{#1}{#2}} 
\newcommandx{\floor}[2][1=normal]{\braces{\lfloor}{\rfloor}{#1}{#2}} 
\newcommandx{\round}[2][1=normal]{\braces{[}{]}{#1}{#2}} 
\newcommandx{\der}[1]{D^{#1}} 
\newcommandx{\partder}[4][1={},4={}]{\frac{\partial^{#4} #2}{\partial #3^{#4}}\ifargdef{#1}{\Big|_{#1}}} 
\newcommandx{\integ}[4][1={},2={}]{\int_{#1}^{#2} #3 \, #4} 
\newcommandx{\asympffaster}[2][1=normal]{o\braces{(}{)}{#1}{#2}} 
\newcommandx{\asympfaster}[2][1=normal]{O\braces{(}{)}{#1}{#2}} 
\newcommandx{\asympeq}[2][1=normal]{\Theta\braces{(}{)}{#1}{#2}} 
\newcommandx{\asympsslower}[2][1=normal]{\omega\braces{(}{)}{#1}{#2}} 
\newcommandx{\asympslower}[2][1=normal]{\Omega\braces{(}{)}{#1}{#2}} 
\newcommand{\matr}[1]{\begin{bmatrix} #1 \end{bmatrix}} 
\newcommandx{\norm}[2][1=normal]{\braces{\|}{\|}{#1}{#2}} 
\renewcommandx{\sp}[3][1=normal]{\braces{\langle}{\rangle}{#1}{#2, #3}} 
\newcommand{\adj}[1]{{#1}^\ast} 
\newcommandx{\End}[2][2={}]{\mathcal{L}\opleft( #1 \ifargdef{#2}{, #2} \opright)} 
\newcommand{\orthsum}{\oplus} 
\newcommand{\T}{\top} 
\newcommand{\embeds}{\hookrightarrow} 
\newcommandx{\measure}[2][1=normal]{\operatorname{vol}\braces{(}{)}{#1}{#2}} 
\newcommand{\indset}[1]{\chi_{#1}} 
\newcommand{\indcoeff}[1]{\mathds{1}_{#1}} 
\DeclareMathOperator{\supp}{supp} 
\newcommandx{\Leb}[3][1={},3=normal]{L^{#2}\ifargdef{#1}{\braces{(}{)}{#3}{#1}}{}} 
\newcommandx{\Lebnorm}[4][1=normal,3={2},4={}]{\norm[#1]{#2}_{\Leb[#4]{#3}}} 
\renewcommandx{\l}[3][1={},3=normal]{\ell^{#2}\ifargdef{#1}{\braces{(}{)}{#3}{#1}}} 
\newcommandx{\lnorm}[4][1=normal,3={2},4={}]{\norm[#1]{#2}_{\l[#4]{#3}}} 
\newcommandx{\Smooth}[4][1={},3={},4=normal]{C_{#3}^{#2}\ifargdef{#1}{\braces{(}{)}{#4}{#1}}} 
\newcommandx{\Schwartz}[2][1={},2=normal]{\mathscr{S}\ifargdef{#1}{\braces{(}{)}{#2}{#1}}} 
\newcommandx{\Schwartzpoly}[2][1=normal]{\braces{\langle}{\rangle}{#1}{\abs[#1]{#2}} } 
\newcommandx{\Tempdistr}[2][1={},2=normal]{\mathscr{S}'\ifargdef{#1}{\braces{(}{)}{#2}{#1}}} 
\newcommandx{\distrinp}[3][1=normal]{\braces{\langle}{\rangle}{#1}{#2, #3}} 
\newcommand{\Linedistr}[1][]{\mathfrak{L}\ifargdef{#1}{_{#1}}{}} 
\newcommand{\conv}{\ast} 
\newcommandx{\ft}[3][1=default,2=auto]{
\ifstrequal{#1}{default}{\widehat{#3}}{
\ifstrequal{#1}{long}{{\braces{(}{)}{#2}{#3}}^{\wedge}}{}}} 
\newcommand{\ftop}{\mathcal{F}} 
\newcommandx{\ift}[3][1=default,2=auto]{
\ifstrequal{#1}{default}{\check{#3}}{
\ifstrequal{#1}{long}{{\braces{(}{)}{#2}{#3}}^{\vee}}{}}} 
\title{\bfseries Asymptotic Analysis of Inpainting via Universal Shearlet Systems}
\author{\hspace*{-1cm}
  Martin Genzel and Gitta Kutyniok\\[.5em]
  \textsc{\hspace*{-1cm}Department of Mathematics, Technische Universit\"at Berlin}\\
  10623 Berlin, Germany \\[.5em]
  \hspace*{-1cm}E-mails: \href{mailto:genzel@math.tu-berlin.de}{genzel@math.tu-berlin.de}, \href{mailto:kutyniok@math.tu-berlin.de}{kutyniok@math.tu-berlin.de}
}
\begin{document}
\listoftodos
\newcommand{\OpAnalysis}[1]{T_{#1}} 
\newcommand{\OpSynthesis}[1]{\adj T_{#1}} 
\newcommand{\OpFrame}[1]{S_{#1}} 
\newcommand{\defsf}{\varphi} 
\newcommand{\InpSp}{\mathcal{H}} 
\newcommand{\InpSpK}{\InpSp_K} 
\newcommand{\ProK}{P_K} 
\newcommand{\InpSpM}{\InpSp_M} 
\newcommand{\ProM}{P_M} 
\newcommand{\sig}{x^0} 
\newcommand{\sigrec}{x^\star} 
\newcommand{\PF}{\Phi} 
\newcommand{\pf}{\phi} 
\newcommand{\cluster}{\Lambda} 
\newcommand{\concentr}[2]{\kappa\ifargdef{#1}{\opleft( #1, #2 \opright)}} 
\newcommand{\clustercoh}[2]{\mu_c \ifargdef{#1}{( #1 , #2)}} 
\newcommand{\anorm}[2]{\norm{#1}_{1,#2}} 
\newcommand{\ver}{\mathrm{v}} 
\newcommand{\hor}{\mathrm{h}} 
\newcommand{\dir}{\imath} 
\newcommand{\meyerscal}{\phi} 
\newcommand{\Scalfunc}{\Phi} 
\newcommand{\Corofunc}{W} 
\newcommand{\Coro}{\mathscr{K}} 
\newcommand{\conefunc}{v} 
\newcommand{\Conefunc}[1]{V_{(#1)}} 
\newcommand{\Cone}[1]{\mathscr{C}_{(#1)}} 
\newcommand{\pscal}{A} 
\newcommand{\pshear}{S} 
\newcommand{\pscalcone}[2]{A_{#1,(#2)}} 
\newcommand{\shearcone}[1]{S_{(#1)}} 
\newcommand{\unishplain}{\psi} 
\newcommand{\unishplainft}{\ft{\unishplain}} 
\newcommand{\unish}[5][{}]{\unishplain_{#3,#4,#5}^{#2\ifargdef{#1}{,(#1)}}} 
\newcommand{\unishft}[5][{}]{\unishplainft_{#3,#4,#5}^{#2\ifargdef{#1}{,(#1)}}} 
\newcommand{\Scalparamdomain}{\mathsf{A}} 
\newcommand{\aj}{\alpha_j} 
\newcommandx{\Unish}[3][1=\meyerscal,2=\conefunc,3=(\aj)_j]{\operatorname{SH}(#1, #2, #3)} 
\newcommandx{\UnishLow}[1][1=\meyerscal]{\operatorname{SH}_{\mathrm{Low}}(#1)} 
\newcommandx{\UnishInt}[3][1=\meyerscal,2=\conefunc,3=(\aj)_j]{\operatorname{SH}_{\mathrm{Int}}(#1, #2, #3)} 
\newcommandx{\UnishBound}[3][1=\meyerscal,2=\conefunc,3=(\aj)_j]{\operatorname{SH}_{\mathrm{Bound}}(#1, #2, #3)} 
\newcommand{\Unishgroup}{\Gamma} 
\newcommand{\Unishind}{\gamma} 
\newcommand{\lmax}{l_j} 
\newcommand{\weight}{w} 
\newcommand{\wlen}{\rho} 
\newcommand{\model}{{\weight\Linedistr}} 
\newcommand{\modelrec}{\model^\star} 
\newcommand{\Corofilter}{F} 
\newcommand{\mdiam}{h} 
\newcommand{\mask}[1]{\mathscr{M}_{#1}} 
\newcommand{\Unishshort}{\Psi} 
\newcommand{\scalpm}[1]{#1^{\pm1}} 
\newcommand{\translind}[2]{#1^{(#2)}} 
\maketitle
\begin{abstract}
Recently introduced inpainting algorithms using a combination of applied harmonic analysis and compressed sensing have turned out to be very successful.
One key ingredient is a carefully chosen representation system which provides (optimally) sparse approximations of the original image. 
Due to the common assumption that images are typically governed by anisotropic features, directional representation systems have often been utilized. 
One prominent example of this class are \emph{shearlets}, which have the additional benefit to allowing faithful implementations. 
Numerical results show that shearlets significantly outperform wavelets in inpainting tasks. One of those software packages, \url{www.shearlab.org}, even offers the flexibility of using a different parameter for each scale, which is not yet covered by shearlet theory.

In this paper, we first introduce \emph{universal shearlet systems} which are associated with an arbitrary scaling sequence, thereby modeling the previously mentioned flexibility.
In addition, this novel construction allows for a smooth transition between wavelets and shearlets and therefore enables us to analyze them in a uniform fashion.
For a large class of such scaling sequences, we first prove that the associated universal shearlet systems form band-limited Parseval frames for $\Leb[\R^2]{2}$ consisting of Schwartz functions.
Secondly, we analyze the performance for inpainting of this class of universal shearlet systems within a distributional model situation using an $\l{1}$-analysis minimization algorithm for reconstruction.
Our main result in this part states that, provided the scaling sequence is comparable to the size of the (scale-dependent) gap, nearly-perfect inpainting is achieved at sufficiently fine scales.
\end{abstract}

\vspace{.1in}

\textbf{Key words.}
Co-Sparsity, Compressed Sensing, Inpainting, $\ell_1$ Minimization, Multiscale Representation Systems, Shearlets, Sparse Approximation, Wavelets

\vspace{.1in}

\textbf{AMS subject classifications.}
42C40, 42C15, 65J22, 65T60, 68U10

\vspace{.1in}
\textbf{Acknowledgements.}
The second author acknowledges support by the Einstein Foundation Berlin, by the Einstein Center for Mathematics
Berlin (ECMath), by Deutsche Forschungsgemeinschaft (DFG) Grant KU 1446/14, by the DFG Collaborative
Research Center TRR 109 ``Discretization in Geometry and Dynamics'', and by the DFG Research Center
\textsc{Matheon} ``Mathematics for key technologies'' in Berlin.

\vspace{.1in}

\section{Introduction}

In data processing, due to frequent failures of technologies, one of the most natural questions to ask is whether and how lost data can be recovered.
In the context of imaging sciences, this is termed \emph{inpainting} due to the according physical task when restoring a painting.
Given a digital image which is corrupted, it is certainly impossible to reconstruct the missing content without any prior knowledge.
For instance, one may think of an image of the sky where several birds can be seen, but with one bird now being deleted.
Hence, each inpainting method has to make certain further assumptions on the given image signal.

Inpainting algorithms based on variational approaches propagate information from the boundaries and intend to guarantee smoothness; see, e.g., \cite{BalBerCas01, bertalmio2001navier, ChaShe01}.
Those which are based on applied harmonic analysis---typically combined with ideas of compressed sensing---encode prior information by assuming that a carefully selected representation system provides sparse approximations of the original image; and we refer to the pioneering paper \cite{ElaStaQue05} as well as \cite{CaiChaShe10, DonJiLi12, herrmann2008non}.
It should also be mentioned that most of these approaches assume that the position and shape of the area that requires inpainting is known.
If this is not the case, ``blind inpainting'' has to be applied, which is, as one can easily imagine, an even much more difficult task; first approaches can be found
in \cite{BobStaFad07, DonJiLi12}.

In this paper, we will focus on inpainting via a combination of applied harmonic analysis and compressed sensing in the sense of  $\l{1}$-analysis minimization and assuming that the missing area is known.
In this setting, the directional representation system of shearlets has recently been shown to outperform not only wavelets, but also most other directional systems \cite{kutyniok2014shearlab}.
This raises the challenge to develop a theoretical framework which is particularly able to relate the structural difference between wavelets and shearlets to the inpainting problem.
A first study was performed in \cite{king2014analysis} which however could only show superiority of shearlets over wavelets for a basic thresholding algorithm.

In the sequel, we will pursue a different path and analyze a parametrized family of systems ranging from wavelets to shearlets, aiming to derive a deep understanding of the transition from isotropy to anisotropy for the considered inpainting approach.
Along the way, with the introduction and analysis of \emph{universal shearlet systems} as sparsifying systems, we will also provide a general framework---including wavelets and shearlets---which has the additional freedom to choose a different degree of anisotropy for each scale.
Moreover, this approach has several impacts which also go beyond inpainting tasks (cf. Subsection~\ref{subsec:intro:impact}).

\subsection{Inpainting via Applied Harmonic Analysis and Compressed Sensing}
\label{subsec:intro:inp}

Results of \emph{compressed sensing} have shown that, assumed a signal is sparse within a frame, it can be recovered from highly underdetermined, non-adaptive linear measurements by $\l{1}$-minimization (cf. \cite{DDEK12}).
For this, recall that a \emph{frame} for a Hilbert space $\InpSp$ is a sequence $\PF=(\pf_i)_{i \in I}$ satisfying $A\norm{x}^2 \leq
\lnorm{(\sp{x}{\pf_i})_{i \in I}}[2][I] \leq B \norm{x}^2$ for all $x \in \InpSp$. If the frame bounds $A, B$ can chosen to be equal to $1$, it is typically called \emph{Parseval frame}.

This approach can be also utilized for solving the inverse problem of inpainting. Letting $\sig$ denote the original signal, we assume that $\sig$ is sparse within some frame $\PF$.
Due to the redundancy of frames, there exist infinitely many expansions of $\sig$ in $\PF$. Therefore, it is typically assumed that the \emph{analysis operator}
\begin{equation}
\OpAnalysis{\PF} \colon \InpSp \to \l[I]{2}, \ x \mapsto (\sp{x}{\pf_i})_{i \in I}
\end{equation}
maps $\sig$ to a sparse sequence---this is nowadays referred to as \emph{co-sparsity} \cite{nam2013cosparse}.
In order to recover the original signal $\sig$ from its known part $\ProK \sig$ (here $\ProK$ denotes the orthogonal projection onto the ``known'' subspace of $\InpSp$), the algorithm in \cite{ElaStaQue05} suggests to solve the \emph{$\l{1}$-analysis minimization problem} given by
\begin{equation}
\min_{x} \lnorm{(\sp{x}{\pf_i})_{i \in I}}[1][I] \quad \mbox{subject to } \ProK x = \ProK\sig.
\end{equation}
The key ingredient of this algorithmic approach is the choice of the sparsifying frame. In imaging science, this relates to the question of what an image actually is.
Certainly, we cannot give a general answer to this question.
However, one classical viewpoint states that images are governed by edge-like structures meaning that anisotropic (in the sense of directional) features are dominant.
This point of view is also backed up by neurophysiology because it was shown that the human visual cortex is highly sensible to curvilinear structures \cite{hubel1959receptive,daugman1988complete}.
Images from applications such as seismic data might have the strongest bias towards such structures since they basically only consist of parabolic curves \cite{hennenfent2006application}.
This is one of the reasons why directional representation systems are currently a focus of intense research.

\subsection{From Classical Shearlets to Universal Shearlets}
\label{subsec:intro:shearlets}

About 15 years ago, it was recognized that wavelet systems are not capable to optimally encode anisotropic features. Among the approaches for directional representation systems, that were suggested to cope with this problem, the framework of \emph{curvelets} can be justifiably regarded as a breakthrough.
Indeed, curvelet systems were the first representation system which have been shown to provide optimally sparse approximations for a certain class of model functions, called \emph{cartoon-like functions}.
These are functions of the form $f = f_0 + f_1 \indset{B}$, where  $B \subset \intvcl{0}{1}^2$ is a set with $\boundary{B}$ being a closed $\Smooth{2}$-curve with bounded curvature, and $f_i \in \Smooth[\R^2]{2}$ are functions with $\supp{f_i} \subset \intvcl{0}{1}^2$ and $\norm{f_i}_{\Smooth{2}} \leq 1$ for $i=0,1$ (cf. \cite{CD2004}).

Shearlet systems were originally introduced in \cite{GKL05} as an attempt to introduce a directional representation system with similar optimal approximation properties, but with the advantage of allowing for a unified treatment of the continuum and digital situation in the sense of faithful implementations.
This could be achieved by using \emph{parabolic scaling}, \emph{shearing} as a means to change the orientation---curvelets employ rotation which destroys the digital grid---, and \emph{translation} applied to very few generating functions.
More precisely, these geometric transformations operate on some generator $\unishplain \in \Leb[\R^2]{2}$, leading to the system
\begin{equation}\label{eq:intro:shearletsystem}
\{\unishplain_{j,l,k} := 2^{3j/2}\unishplain(\pshear^l \pscal^j \dotarg - k) \suchthat j \in \Z, l \in \Z, k \in \Z^2\},
\end{equation}
where
\begin{equation}
\pscal = \matr{4 & 0 \\ 0 & 2}, \quad \pshear = \matr{1 & 1 \\ 0 & 1}
\end{equation}
denote the parabolic scaling matrix and shearing matrix, respectively.
The tiling of frequency domain of such a simple shearlet system is illustrated in Figure~\ref{fig:intro:tiling:classical}.
However, to avoid a directional bias and to promote more symmetry in the definition, this naive approach is usually enhanced by a \emph{cone-adaption}, see also Figure~\ref{fig:intro:tiling:coneadapted}.

\begin{figure}
	\centering
	\subfigure[]{
		\centering
		\includegraphics[width=.3\textwidth]{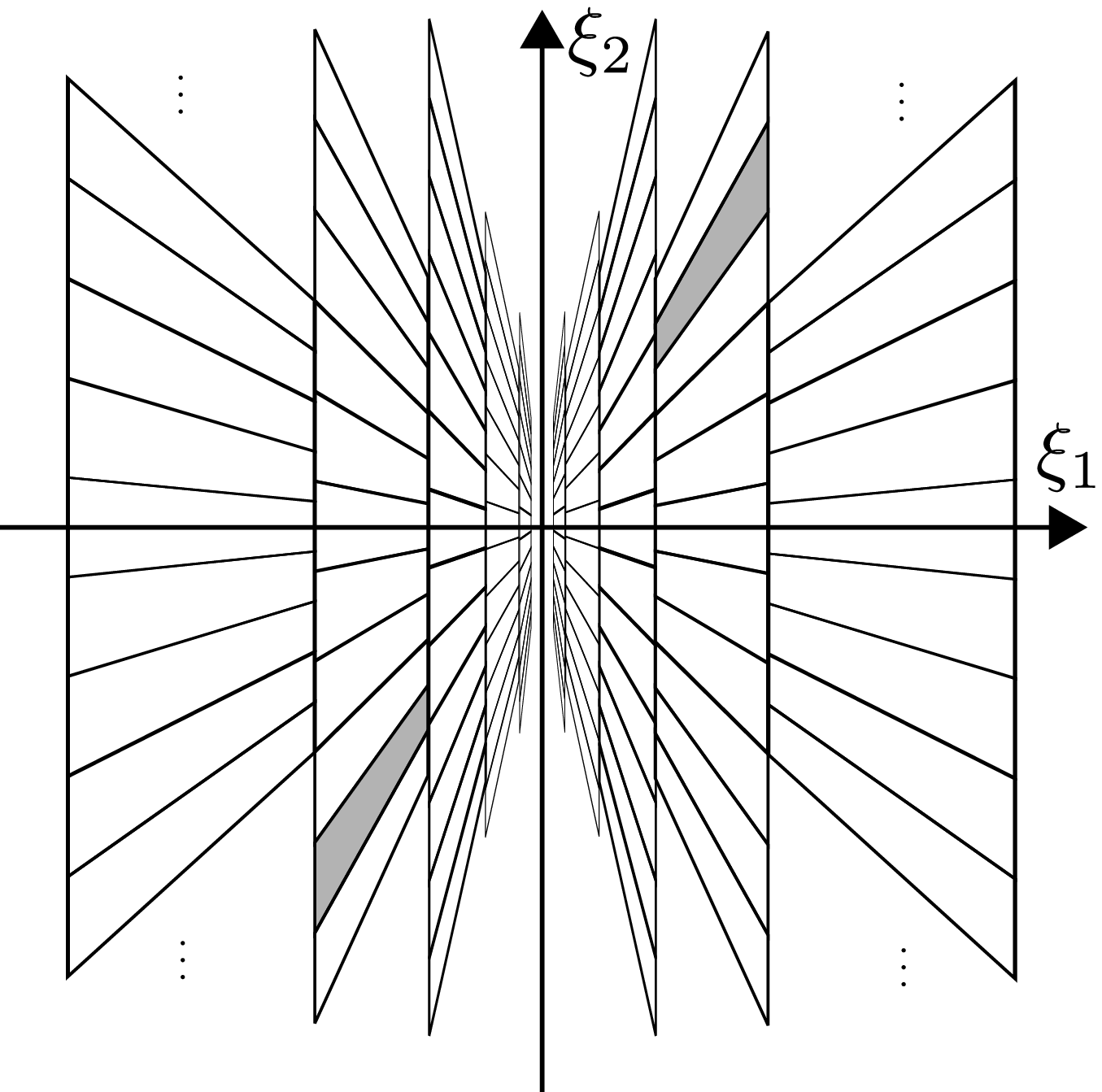}
		\label{fig:intro:tiling:classical}
	}%
	\qquad
	\subfigure[]{
		\centering
		\includegraphics[width=.3\textwidth]{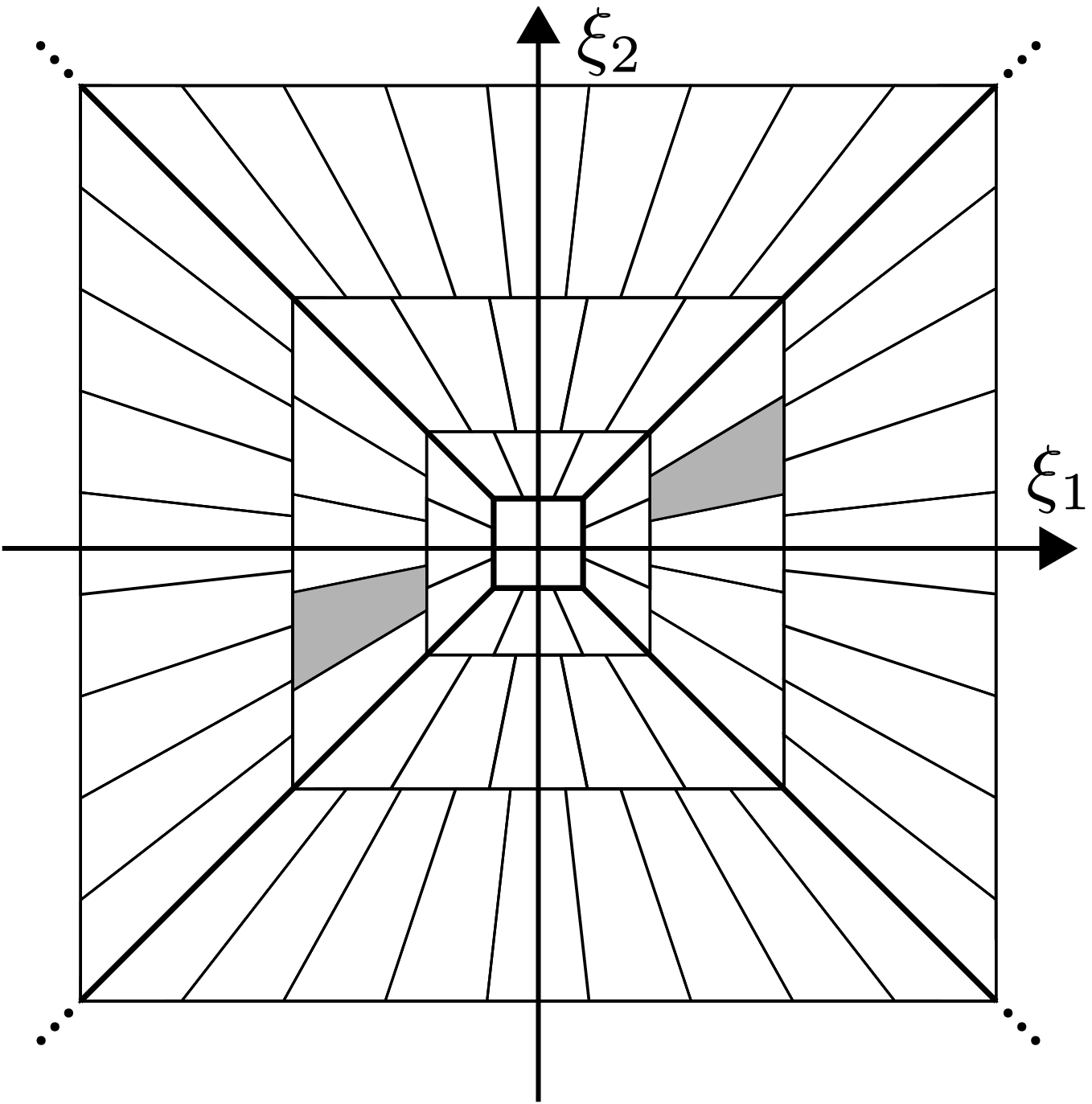}
		\label{fig:intro:tiling:coneadapted}
	}%
	\caption{\subref{fig:intro:tiling:classical} Frequency tiling of a classical shearlet system. \subref{fig:intro:tiling:coneadapted} Frequency tiling of a cone-adapted shearlet system.}
	\label{fig:intro:tiling}
\end{figure}

The original definition used band-limited generators, and the associated ``classical'' \emph{(cone-adapted) shearlet systems} indeed provide optimally sparse approximations of cartoon-like functions \cite{guo2007optimally}, while constituting a Parseval frame for $\Leb[\R^2]{2}$ at the same time.
However, due to an application of truncations in frequency domain, this frame has non-smooth Fourier transforms leading to bad spatial decay.
But as suggested in \cite{guo2013construction}, a slight modification of the seam elements can be used to obtain a band-limited Parseval frame for $\Leb[\R^2]{2}$ which consists of Schwartz functions. In order to achieve superior localization, compactly supported shearlets have then been introduced in \cite{KKL2012}, also providing similarly optimal sparsity (cf. \cite{KL11}), but coming with the drawback that they do not form a Parseval frame.

It is certainly desirable to understand the structural difference between wavelets and shearlets, not only from a theoretical side to bridge the gap and to derive a deep understanding of the transition of properties, but also from a practical perspective to provide a parametrized family that allows optimal encoding of different anisotropies.
A first approach in this direction were so-called \emph{$\alpha$-shearlets} \cite{Kei2012,KLL2012}, which form compactly supported (non-Parseval) frames.
But as mentioned before, for some applications band-limitedness or/and the Parseval property are crucial.
A natural approach to tackle this problem is to transfer the construction from \cite{guo2013construction} to this setting.

Intriguingly---and this is by far not straightforward---, one problem is the complicated interaction of the scaling parameter with the overlapping of the seam elements at the boundaries of the cones (see Figure~\ref{fig:intro:tiling:coneadapted}).
This requires the introduction of \emph{scaling sequences} $(\aj)_{j} \subseteq \intvop{-\infty}{2}$ with associated scaling matrices
\begin{equation}
	\pscal_{\aj}^j = \begin{pmatrix} 4 & 0 \\ 0 & 2^{\aj} \end{pmatrix},
\end{equation}
finally offering a maximal flexibility in scaling, since the scaling parameter could vary independently for each $j$.

Based on this idea, we introduce \emph{universal shearlet systems} in Definition~\ref{def:shearlets:unishsystem}. This notion now enables us for the first time to treat various band-limited systems---including classical cone-adapted shearlets, $\alpha$-shearlets, and band-limited wavelets---in a uniform manner. Closing the gap between (band-limited) shearlets and wavelets in a continuous way does in particular allow for a common analysis.
In addition, the relaxation in the directional scaling turns out to be even flexible enough to prove that, for a large class of scaling sequences, the associated universal shearlets form a band-limited Parseval frame for $\Leb[\R^2]{2}$ consisting of Schwartz functions (cf. Theorem~\ref{thm:shearlets:unishprop}). 

The underlying construction of universal shearlet systems presented in Section~\ref{sec:shearlets} is self-contained. The constructed systems might be interesting also for other applications because of their desirable properties: smoothness, excellent space-frequency localization, and infinitely many vanishing moments. It should be mentioned that the shearlet toolbox, presented in \cite{kutyniok2014shearlab} and available for download from \url{www.shearlab.org}, utilizes this novel construction technique and provides an implementation of compactly supported universal shearlets.

\subsection{Our Setting}

Universal shearlets are indeed perfectly suited to serve as sparsifying system in our analysis of the previously discussed inpainting algorithm, since they in particular allow for a unified view of wavelets and shearlets. 

Concerning a suitable image model, remembering that we aim to model curvilinear structures, a discrete setting seems inappropriate because it would obscure the geometric content. Therefore, we choose a continuous model,
more precisely, a \emph{distributional model} which mimics a straight (horizontal) line. Since images are typically defined on bounded domains, this line is compactly supported and smoothly truncated at its endpoints to avoid any point singularities. The mask has the shape of the vertical strip and cuts out the middle part of the line (see Figure~\ref{fig:intro:model:maskedmodel}). As simple as this approach seems, it even provides a quite accurate model for some applications; for example in seismology, where the absence of sensors produces white strips in the data, which basically only consist of smooth curves (see Figure~\ref{fig:intro:model:seismic}). Also, this model still enables us to relate the size of the gap with the length of the elements of the universal shearlet system, leading to very natural and intuitive necessary conditions for inpainting to succeed.

\begin{figure}
	\centering
	\subfigure[]{
		\centering
		\includegraphics[width=.3\textwidth]{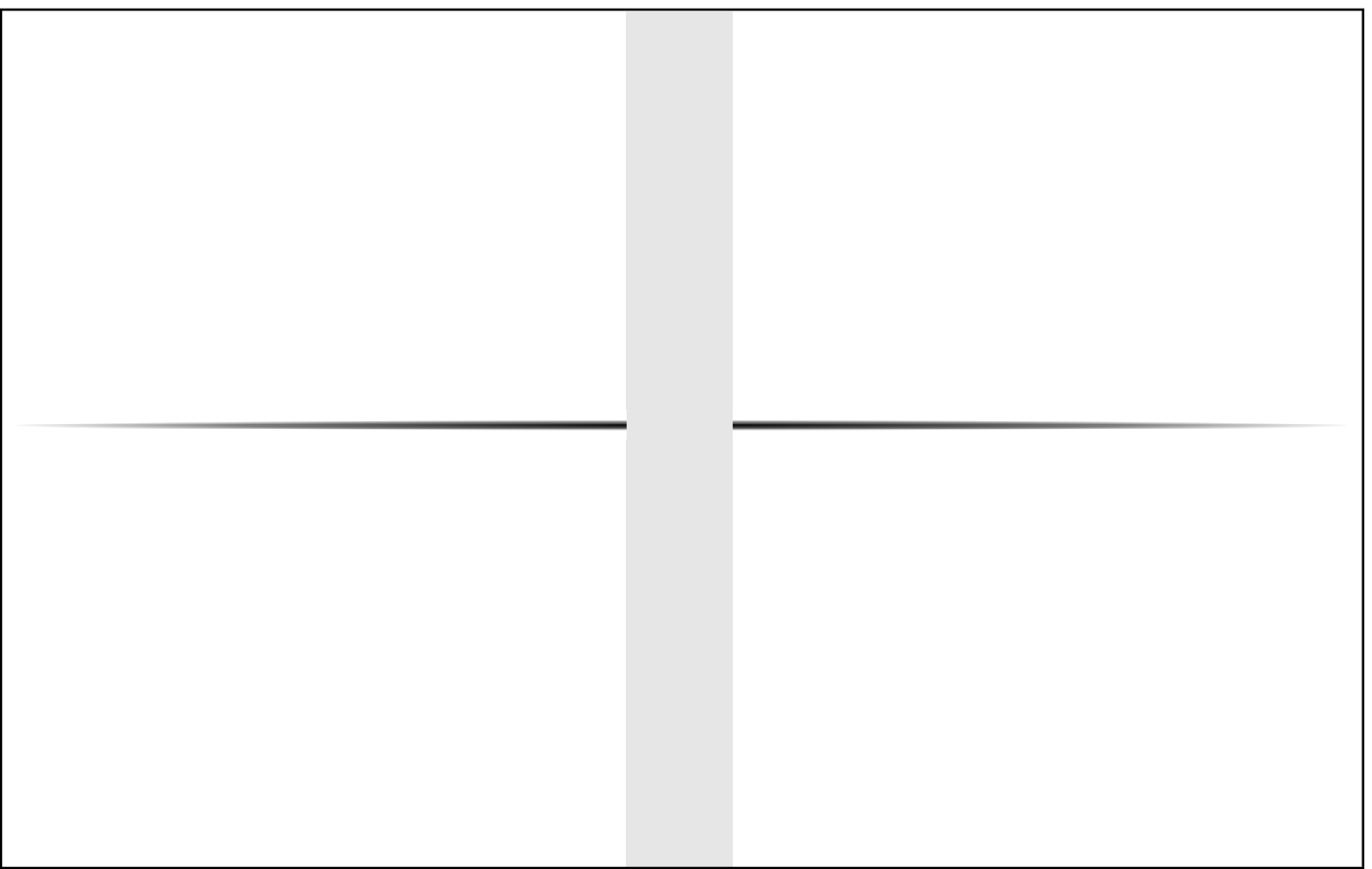}
		\label{fig:intro:model:maskedmodel}
	}%
	\qquad
	\subfigure[]{
		\centering
		\includegraphics[width=.3\textwidth]{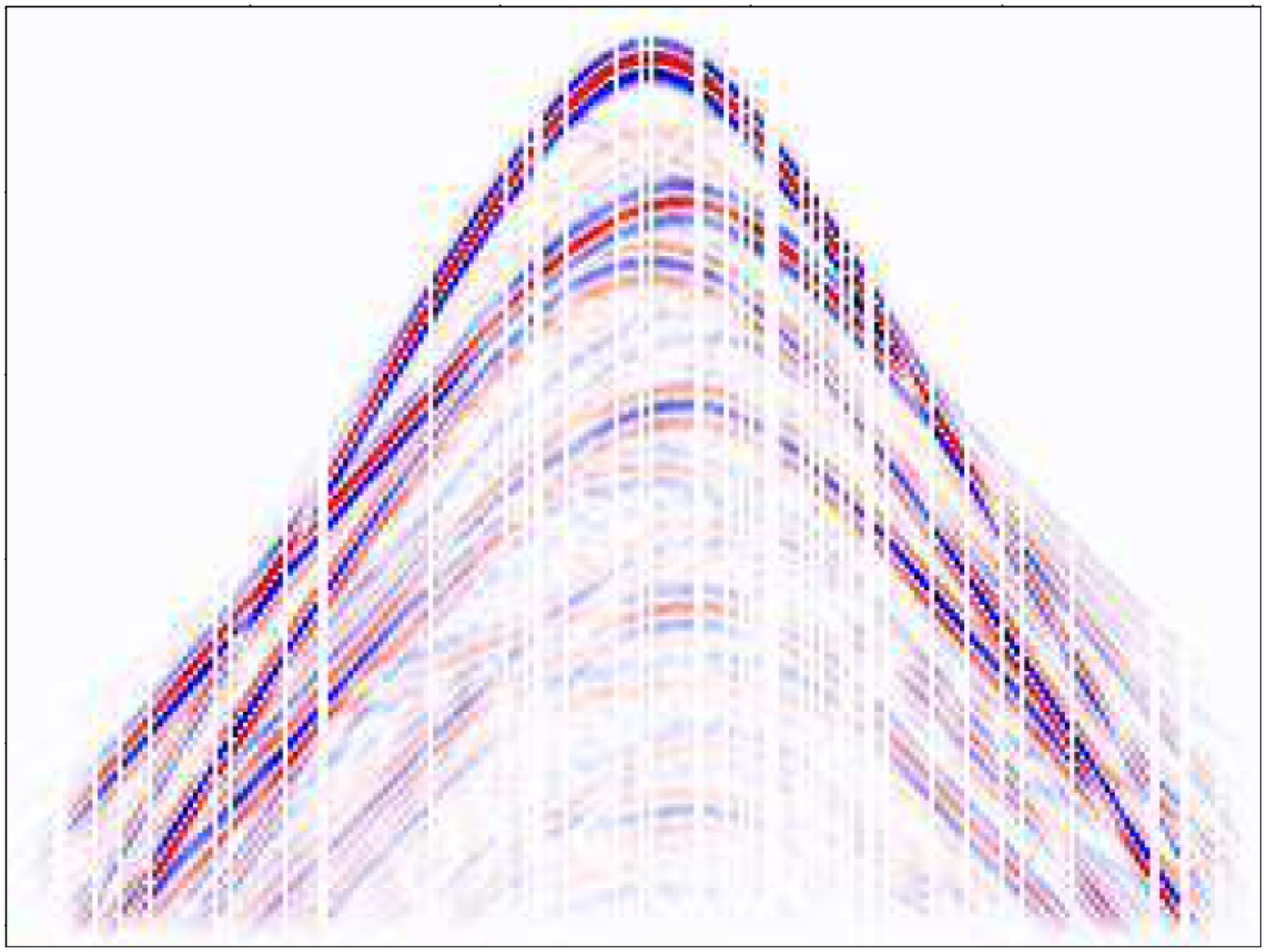}
		\label{fig:intro:model:seismic}
	}%
	\caption{\subref{fig:intro:model:maskedmodel} Line singularity with a vertical strip removed. \subref{fig:intro:model:seismic} Synthetic seismic data; some sensors failed to work resulting in randomly distributed missing traces (image from \cite{hennenfent2006application}).}
	\label{fig:intro:model}
\end{figure}

On the algorithmic side, we will choose the $\l{1}$-analysis minimization approach, which has been already discussed in Subsection~\ref{subsec:intro:inp}.

\subsection{Asymptotic Inpainting}

Through a bandpass filtering process on the model, and simultaneously allowing the gap size to vary with the scale, we are able to perform an asymptotic analysis depending on the scale. Certainly, in practical applications we can only consider a finite number of scales and the gap size usually stays fixed, but the main goal of our analysis is to derive theoretical insight into the interplay between the scaling behavior of the sparsifying system and the gap size.

Our main result concerning the recovery error of inpainting in the setting just described, Theorem~\ref{thm:imginp:asympconv}, then proves that, assuming the gap size is asymptotically smaller than the length of the corresponding shearlet elements, \emph{asymptotically perfect inpainting} is achieved. Indeed, the necessary condition provides a deep insight into the relation between the degree of anisotropy of the underlying system and the admissible gap size; and it particularly shows the structural difference between wavelets and shearlets.

\subsection{Expected Impact}
\label{subsec:intro:impact}

We anticipate our results to have the following impacts:

\begin{itemize}[leftmargin=3em]
\item
	\emph{ShearLab:} A comprehensive software package for the 2D and 3D shearlet transform is available at \url{www.shearlab.org}, see also \cite{kutyniok2014shearlab}. The provided algorithms offer the user to specify various parameters among which are also scaling parameters. However, the current shearlet theory does not incorporate this amount of flexibility. With the introduction and analysis of universal shearlets, now a first theoretical framework is available, giving also additional insight into convenient choices of scaling parameters.
\item
	\emph{Construction of compactly supported shearlet systems:} One main objective---which some people even coined the ``holy grail'' in applied harmonic analysis---is to construct a compactly supported directional representation system which forms a Parseval frame and provides optimally sparsifying approximations of cartoon-like functions. In shearlet theory, a certain class of band-limited shearlets which constitute a Parseval frame was shown to provide such optimal sparse approximations \cite{guo2007optimally}. Some years later, compactly supported shearlets were introduced and under certain conditions similar approximation rates for cartoon-like functions could be achieved \cite{KL11}; however those frames do not have the Parseval property. We believe that universal shearlets could be another attack point to this challenge due to their additional flexibility. It seems conceivable that a smart choice of scaling sequences could lead to optimally sparsifying, compactly supported shearlet systems whose frame 
bounds can be moved (arbitrarily) close to $1$.
\item
	\emph{Structured dictionary learning:} Most dictionary learning algorithms suffer from the fact that the generated systems lack in structure such as good frame bounds or fast associated transforms. Some attempts have been made to equip the generated systems with structural properties such as, for instance, the double sparsity approach in \cite{RZE10}. The system of universal shearlets can be seen in this light, allowing an adaption to the data at hand by learning the scaling sequence, but still preserving their advantageous functional analytic and numerical properties. Thus, we expect this work to eventually initiate a new approach toward structured dictionary learning with choosable objectives such as, for instance, inpainting.
\item
	\emph{Interplay of systems and masks:} It is well-known that directional representation systems are superior to wavelets for inpainting via compressed sensing methodologies assumed that the image is governed by curvilinear structures. Rigorous results are however missing; only a first analysis of a simple thresholding algorithm was presented as one result of \cite{king2014analysis}. Deriving an analysis which enables to precisely show superiority of certain representation systems over others (depending on the size and structure of the mask) will require novel tools. But we strongly believe that universal shearlets and our inpainting results can serve as a first step. For instance, the notion of scaling sequences provides a means to compare different systems with respect to their anisotropy in a much more careful manner as possible before.
\end{itemize}

\subsection{Outline}

The paper is organized as follows. The abstract framework of inpainting and associated error estimates, in the setting of a general Hilbert space, are introduced and discussed in Section~\ref{sec:framework}. The novel system of universal shearlets is introduced in Section~\ref{sec:shearlets} with the proof of the main result on its Parseval frame property being outsourced to Subsection~\ref{subsec:proofs:unishprop}. Section~\ref{sec:model} is then devoted to the specific image model, that we exploit in our work, as well as its asymptotic decomposition. Having detailed the considered setting, our main result on the asymptotic behavior of shearlet inpainting and its proof are presented in Section~\ref{sec:imginp}. Finally, some possible extensions are discussed (cf. Section~\ref{sec:extension}), followed by some proofs in Section~\ref{sec:proofs}, which we decided to outsource due to their non-intuitive and more technical nature.

\section{Abstract Inpainting Framework}
\label{sec:framework}

\subsection{Modeling Spaces}

We follow \cite{king2014analysis} and first tackle the problem of inpainting in the very general setting of a separable Hilbert space $\InpSp$.
As underlying and undamaged \emph{signal}, which we ultimately want to recover, we choose an arbitrary element $\sig \in \InpSp$.
To incorporate the fact that some ``information'' of $\sig$ got lost, we assume that $\InpSp$ decomposes into an orthogonal sum of two closed subspaces, namely a \emph{known part} and a \emph{missing part},
\begin{equation}
\InpSp = \InpSpK \orthsum \InpSpM = \ProK \InpSp \orthsum \ProM\InpSp,
\end{equation}
where $\ProK$ and $\ProM$ denote the orthogonal projections onto $\InpSpK$ and $\InpSpM$, respectively.
This leads to the following challenge: \emph{Given a corrupt signal $\ProK \sig$, recover the missing part $\ProM\sig$.}

This includes in particular that the space $\InpSpM$ is explicitly known to us.
Moreover, the above task implies that we are rather looking for the ``real'' missing part $\ProM\sig$ and not only for some completion of $\ProK \sig$ which appears ``reasonable'' to the beholder (for example, if objects are removed from the background, cf. \cite{KomTzi07}).
We finally wish to emphasize that the signal model and the space decomposition are defined independently, enabling us to analyze the $\sig$ and $\InpSpM$ almost separately in Subsection~\ref{subsec:framework:errorestimate}.

In terms of image inpainting, we will consider a continuous image model situation where $\InpSp = \Leb[\R^2]{2}$.
Here, the missing space is going to be $\InpSpM = \Leb[\mask{}]{2}$ for some measurable set $\mask{}\subset \R^2$, which should be seen as a \emph{mask} covering the corrupted parts of the painting. Thus, it might be helpful to already think of this concrete setting, even when dealing with the abstract framework.

\subsection{Inpainting via \texorpdfstring{$\l{1}$}{l\textonesuperior}-Minimization}

In order to formulate an appropriate reconstruction method, it is indispensable to make further assumptions on the signal and missing space.
Throughout this paper, we presume that $\sig$ can be ``efficiently'' represented by a certain Parseval frame $\PF = (\pf_i)_{i \in I}$ for $\InpSp$, which can be either chosen non-adaptively or adaptively, based on the given model.
In the classical theory of sparse representations, this would ask for solving the $\l{0}$-minimization problem
\begin{equation}\label{eq:framework:l0min}
\min_{c \in \l[I]{2}} \lnorm{c}[0][I] \quad \text{subject to } \sig = \OpSynthesis{\PF} c = \sum_{i \in I} c_i \pf_i,
\end{equation}
i.e., one tries to find a synthesis sequence of $\sig$ that has as few as possible non-zero elements measured by $\lnorm{c}[0] := \cardinality\{ c_i \suchthat c_i \neq 0 \}$. Those coefficients may be later identified with the ``significant'' features of  $\sig$.

As in \cite{donoho2013microlocal, king2014analysis}, we will pursue a different strategy---the so-called \emph{analysis approach}---which seems to be more appropriate for our purposes.
Instead of regarding all synthesis sequences for $\sig$, we merely consider the \emph{analysis coefficients} $\OpAnalysis{\PF}\sig = (\sp{\sig}{\pf_i})_{i \in I}$ which, due to the Parseval
property, allow for a reconstruction by $\sig = \OpSynthesis{\PF}(\sp{\sig}{\pf_i})_{i \in I} = \sum_{i \in I} \sp{\sig}{\pf_i} \pf_i$.
Since $\PF$ does not necessarily need to constitute a basis, $\OpAnalysis{\PF}\sig$ might not be a (sparse) solution of \eqref{eq:framework:l0min}.
It is therefore not completely clear what it means to provide a sparse representation in the sense of this analysis approach; a precise definition will follow in the next subsection.
However, one major benefit of this strategy is the circumvention of certain problems that could occur when using synthesis coefficients, e.g., numerical instabilities.
Moreover, we just aim for a good reconstruction in $\InpSp$; thus, knowing the sparsest representation might not give further profit \cite{king2014analysis}.

Inspired by the ideas of \emph{compressed sensing} (cf. \cite{DDEK12,FR2013}), we now present a recovery algorithm which was already used in \cite{kinganalysis, king2014analysis}:
\begin{framed}
\begin{algorithm}[Inpainting via $\l{1}$-minimization]\label{algo:framework:l1min}\leavevmode


\textsc{Input:} corrupted signal $\ProK \sig \in \InpSpK$, Parseval frame $\PF = (\pf_i)_{i \in I}$ for $\InpSp$

\textsc{Compute:} 
\begin{equation} \tag{$\l{1}$\scshape -Inp} \label{eq:framework:l1min}
	\sigrec = \argmin_{x \in \InpSp}\lnorm{\OpAnalysis{\PF} x}[1][I] \quad \text{subject to } \ProK \sig = \ProK x
\end{equation}

\textsc{Output:} recovered signal $\sigrec \in \InpSp$

\end{algorithm}
\end{framed}
Summarized in words, Algorithm~\ref{algo:framework:l1min} minimizes the $\l{1}$-norm among all possible reconstruction candidates, which is the set of all signals coinciding with $\sig$ on $\InpSpK$.
If we assume that the undamaged signal $\sig$ is sufficiently sparsified by $\PF$---this particularly implies a small $\l{1}$-norm of $\OpAnalysis{\PF}\sig$---the completion $\sigrec$ hopefully provides a good recovery respecting the significant features of $\sig$.

The numerical realization of Algorithm~\ref{algo:framework:l1min} implicitly requires that the minimum in \eqref{eq:framework:l1min} is really attained.
However, the error estimate of Theorem~\ref{thm:framework:errorl1min} (see Subsection~\ref{subsec:framework:errorestimate}) still holds for all $x \in \InpSp$ satisfying $\ProK \sig = \ProK x$ and $\lnorm{\OpAnalysis{\PF} x}[1] \leq \lnorm{\OpAnalysis{\PF} \sig}[1]$.
Assuming that $\lnorm{\OpAnalysis{\PF} \sig}[1] < \infty$, these two conditions are trivially satisfied for $x = \sig$.

We wish to mention that for various applications, e.g., geometric separation \cite{donoho2013microlocal}, the analysis approach, combined with an $\l{1}$-optimiza\-tion similar to \eqref{eq:framework:l1min}, has turned out to be successful for both empirical problems and theoretical analysis. Interestingly, Algorithm~\ref{algo:framework:l1min} is closely
related to some other optimization methods.
For instance, it can be viewed as a relaxation of the follwing \emph{co-sparsity} problem:
\begin{equation}
\sigrec = \argmin_{x \in \InpSp}\lnorm{\OpAnalysis{\PF} x}[0][I] \quad \text{subject to } \ProK \sig = \ProK x.
\end{equation}
General theoretical results on co-sparsity can be found in \cite{nam2011cosparse,nam2013cosparse}.
Another class of reconstruction methods is the one of thresholding algorithms.
In a certain sense, they can be seen as approximations of the $\l{1}$-problem given by \eqref{eq:framework:l1min}.
A very basic version of thresholding, called ``one-step thresholding'', was introduced in \cite{Kut14} and in an adapted form used in \cite{king2014analysis}.
It surprisingly turned out that very similar convergence results can be proven for this approach (cf. \cite{king2014analysis}), although the algorithmic
approach seems much cruder than \eqref{eq:framework:l1min}.

\subsection{Analysis Tools}
\label{subsec:framework:tools}

\subsubsection{\texorpdfstring{$\delta$}{delta}-Clustered Sparsity}

To prove appropriate error estimates for recovery by Algorithm~\ref{algo:framework:l1min}, we first need to introduce several analysis tools that give us further insight into the structure of the abstract model. The following definition introduces the novel concept of \emph{clustered sparsity}. It was originally used to study the geometric separation problem \cite{donoho2013microlocal}, and precisely specifies our notion of (approximate) sparsity:

\begin{definition}[\cite{king2014analysis}]
\label{def:framework:deltasparse}
Let $\delta > 0$ and $\cluster \subset I$.
A signal $x \in \InpSp$ is called \emph{$\delta$-clustered sparse} in $\PF$ with respect to $\cluster$, if
\begin{equation} \label{eq:framework:deltasparse}
	\lnorm{\indcoeff{\setcompl{\cluster}} \OpAnalysis{\PF} x}[1] \leq \delta.
\end{equation}
In this case, $\cluster$ is said to be a \emph{$\delta$-cluster} for $x$ in $\PF$.
\end{definition}

Intuitively spoken, being $\delta$-clustered sparse for a small $\delta$ means that all coefficients lying outside of $\cluster$ are small, or vice versa:
The analysis coefficients are highly concentrated on $\cluster$. However, the coefficients contained in $\cluster$ do not necessarily need to be large.
In particular, successively enlarging $\cluster$ ensures \eqref{eq:framework:deltasparse} for smaller $\delta$, i.e., better sparsity can be always achieved independent on the structure of $x$. According to this, one should regard $\cluster$ rather as an analysis tool. In fact, Definition~\ref{def:framework:deltasparse}
only makes sense when $x$ is $\delta$-clustered sparse with respect to a ``small'' cluster $\cluster$, and $\delta$ is sufficiently small at the same time.
The concept of clustered sparsity is closely related to classical notions of (approximate) sparsity in this situation because
$\OpAnalysis{\PF}x = (\sp{\sig}{\pf_i})_{i \in I}$ contains only a few large elements (lying in $\cluster$).

Concerning inpainting, one major task is therefore to design a dictionary $\PF$ which provides sufficient (clustered) sparsity for the given benchmark signal $\sig$.
In this way, the significant features of $\sig$ get encoded in terms of an appropriate $\delta$-cluster, giving us deeper insight into the ``geometric'' structure of $\sig$.

\subsubsection{Cluster Coherence}

We did not touch any specific properties of $\InpSpM$ up to now.
Hence, there is still the need for a concept that enables us to analyze the missing part of $\sig$:
\begin{definition}[\cite{king2014analysis}]
\label{def:framework:coherence}
Let $\cluster \subset I$.
The \emph{cluster coherence} of $\PF$ with respect to $\InpSpM$ and $\cluster$ is given by
\begin{equation}
	\clustercoh{\cluster}{\ProM \PF} := \max_{j \in I} \sum_{i \in \cluster} \abs{\sp{\ProM\pf_i}{\ProM \pf_j}},
\end{equation}
where $\ProM \PF := (\ProM \pf_i)_{i \in I}$.
\end{definition}

Compared with other measures for the ``size'' of $\InpSpM$, e.g., the notion of \emph{concentration} introduced in \cite{king2014analysis}, $\clustercoh{\cluster}{\ProM \PF}$ has a very explicit form and is rather easy to estimate.
Note that the signal $\sig$ is not involved in this definition, and $\cluster$ does not need to be a $\delta$-cluster; but later on, we will use one and the same cluster for both definitions.

The following aspects shall help to acquire a better understanding of why the cluster coherence can be viewed as an abstract measure for the gap size: Since we do not know $P_M\sig$,
it is natural to ask for the ``maximal amount'' of missing information.
More precisely, we consider the Parseval expansion $\sig = \sum_{j \in I} \sp{\sig}{\pf_j} \pf_j$ and estimate the analysis coefficients of its projection onto $\InpSpM$:
\begin{equation}\label{eq:framework:coherenceintuition}
	\abs{\sp{\ProM \sig}{\pf_i}} \leq \sum_{i \in I} \abs{\sp{\sig}{\pf_j}} \abs{\sp{\ProM\pf_j}{\pf_i}} = \sum_{i \in I} \abs{\sp{\sig}{\pf_j}} \abs{\sp{\ProM\pf_j}{\ProM \pf_i}}.
\end{equation}
Similar to classical notions of coherence (cf. \cite{donoho2001uncertainty}), the scalar products $\abs{\sp{\ProM\pf_j}{\ProM \pf_i}}$ indicate to which extent measurements with $\pf_i$ and $\pf_j$, respectively, are correlated on $\InpSpM$.
Assuming that $\sig$ is also $\delta$-clustered sparse (with respect to some cluster $\cluster$), we may restrict in \eqref{eq:framework:coherenceintuition} to those frame elements $\pf_i$ which are contained in $\cluster$.
In this sense, the cluster coherence can be interpreted as a signal-independent magnitude for the lack of information; but note that the actual value of $\clustercoh{\cluster}{\ProM \PF}$ results from a strong interaction between the system $\PF$ and the ``geometry'' of $\InpSpM$.

\subsection{Abstract Error Estimate}
\label{subsec:framework:errorestimate}

\subsubsection{The \texorpdfstring{$\l{1}$}{l\textonesuperior}-Analysis Space}

First of all, we shall investigate the question of how to measure the recovery error.
In the pioneer works of \cite{donoho2013microlocal, king2014analysis}, the Hilbert space norm was used to estimate the distance between $\sig$ and its $\l{1}$-recovery $\sigrec$.
For an estimate from above, this indeed worked well; but to show an optimality result---that is, an estimate from below---measuring with $\norm{\cdot}$ seems to be inappropriate.
This drawback is essentially caused by the (Parseval) frame property of $\PF$ implying $\norm{\sig - \sigrec} = \lnorm{\OpAnalysis{\PF} (\sig - \sigrec)}[2]$.
Compared to $\l{1}$, the $\l{2}$-norm has an averaging effect on the coefficients, and therefore, sparsity features might be hidden when using the Hilbert space norm.
For the concrete example of image inpainting (cf. Subsection~\ref{subsec:imginp:optimality}), we will see that $\lnorm{\OpAnalysis{\PF} (\sig - \sigrec)}[2]$ is virtually independent of the scaling parameters $\aj$, i.e., any estimate from below would not take into account the important feature of anisotropy.

Hence, we propose a new measure which naturally arises from our analysis approach of Algorithm~\ref{algo:framework:l1min}, and enables us to detect sparsity.
\begin{definition}
Let $x \in \InpSp$ and $\PF$ be a Parseval frame for $\InpSp$.
Then we define the \emph{$\l{1}$-analysis norm}, with respect to $\PF$, by
\begin{equation}
\anorm{x}{\PF} := \lnorm{\OpAnalysis{\PF} x}[1] = \lnorm{(\sp{x}{\pf_i})_{i \in I}}[1],
\end{equation}
and the \emph{$\l{1}$-analysis space} is given by $\InpSp_{1,\PF} := \{ x \in \InpSp \suchthat \anorm{x}{\PF} < \infty\}$, equipped with $\anorm{\cdot}{\PF}$.
\end{definition}


The tuple $(\InpSp_{1,\PF}, \anorm{\cdot}{\PF})$ indeed defines a normed vector space, due to the injectivity of $\OpAnalysis{\PF}$.
Moreover,
\begin{equation} \label{eq:framework:l1ananormbyhilbnorm}
	\norm{x} = \lnorm{\OpAnalysis{\PF} x}[2] \leq \lnorm{\OpAnalysis{\PF} x}[1] = \anorm{x}{\PF}
\end{equation}
yields the embedding $\InpSp_{1,\PF} \embeds \InpSp$.
In particular, by choosing $x = \sigrec - \sig$, we conclude that small recovery errors in $\InpSp_{1,\PF}$ always imply small ones in $\InpSp$.

To give more intuition, we may think of $\PF$ to be a shearlet system in $\Leb[\R^2]{2}$ for the moment.
In this situation, $\anorm{x}{\PF}$ becomes particularly large when $x$ is governed by curvilinear structures.
It was already mentioned in the introductory part that there is a surprising connection to the human visual system, which is highly sensible to directional features \cite{hubel1959receptive,daugman1988complete}.
Therefore, the $\l{1}$-analysis norm evaluates the content of images in a similar way as the human brain does, and a small reconstruction error will probably give good empirical inpainting results.

\subsubsection{Recovery Error Estimate}

We conclude this section with an abstract estimation of the recovery error.
A detailed proof of the following theorem can be found in \cite{king2014analysis} (actually, the corresponding statement in \cite{king2014analysis} is an estimate for $\norm{\sigrec - \sig}$, but the proof makes use of \eqref{eq:framework:l1ananormbyhilbnorm} in the very beginning and \eqref{eq:framework:errorl1min} is then shown explicitly).

\begin{theorem}
\label{thm:framework:errorl1min}
Let $\delta > 0$ and $\cluster \subset I$ be a $\delta$-cluster for $\sig$ in $\PF$.
Moreover, assume that $\clustercoh{\cluster}{\ProM \PF} < 1/2$.
If $\sig \in \InpSp_{1,\PF}$, then the $\l{1}$-minimizer $\sigrec$ of Algorithm~\ref{algo:framework:l1min} is also contained in $\InpSp_{1,\PF}$ and satisfies
\begin{equation} \label{eq:framework:errorl1min}
	\anorm{\sigrec - \sig}{\PF} \leq \frac{2\delta}{1 - 2 \clustercoh{\cluster}{\ProM \PF}}.
\end{equation}
\end{theorem}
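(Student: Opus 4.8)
The plan is to exploit the defining property of the minimizer $\sigrec$ together with the two hypotheses (the $\delta$-cluster for $\sig$ and the cluster-coherence bound) in the now-standard way from compressed-sensing analysis arguments, working throughout in the $\l{1}$-analysis norm rather than the Hilbert-space norm. Write $d := \sigrec - \sig$ for the error. Since $\sigrec$ solves \eqref{eq:framework:l1min}, we have $\ProK d = 0$, so $d \in \InpSpM$ and hence $\ProM d = d$; this is where the geometry of the missing space enters. Also, because $\sig$ is feasible for \eqref{eq:framework:l1min} and $\sig \in \InpSp_{1,\PF}$, optimality gives $\anorm{\sigrec}{\PF} \leq \anorm{\sig}{\PF} < \infty$, which already shows $\sigrec \in \InpSp_{1,\PF}$ and therefore $d \in \InpSp_{1,\PF}$.

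The core of the argument is a reverse-triangle / splitting estimate on the cluster $\cluster$. First I would write, using $\anorm{\sig}{\PF} \geq \anorm{\sigrec}{\PF} = \anorm{\sig + d}{\PF}$ and splitting the index set into $\cluster$ and $\setcompl{\cluster}$,
\begin{equation}
\anorm{\sig}{\PF} \;\geq\; \lnorm{\indcoeff{\cluster}\OpAnalysis{\PF}(\sig+d)}[1] + \lnorm{\indcoeff{\setcompl{\cluster}}\OpAnalysis{\PF}(\sig+d)}[1]
\;\geq\; \lnorm{\indcoeff{\cluster}\OpAnalysis{\PF}\sig}[1] - \lnorm{\indcoeff{\cluster}\OpAnalysis{\PF}d}[1] + \lnorm{\indcoeff{\setcompl{\cluster}}\OpAnalysis{\PF}d}[1] - \lnorm{\indcoeff{\setcompl{\cluster}}\OpAnalysis{\PF}\sig}[1].
\end{equation}
Since $\lnorm{\indcoeff{\cluster}\OpAnalysis{\PF}\sig}[1] = \anorm{\sig}{\PF} - \lnorm{\indcoeff{\setcompl{\cluster}}\OpAnalysis{\PF}\sig}[1]$ and $\sig$ is $\delta$-clustered sparse, i.e. $\lnorm{\indcoeff{\setcompl{\cluster}}\OpAnalysis{\PF}\sig}[1] \leq \delta$, rearranging yields the cone-type inequality $\lnorm{\indcoeff{\setcompl{\cluster}}\OpAnalysis{\PF}d}[1] \leq \lnorm{\indcoeff{\cluster}\OpAnalysis{\PF}d}[1] + 2\delta$, and hence $\anorm{d}{\PF} \leq 2\lnorm{\indcoeff{\cluster}\OpAnalysis{\PF}d}[1] + 2\delta$.

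It remains to control the single term $\lnorm{\indcoeff{\cluster}\OpAnalysis{\PF}d}[1] = \sum_{i \in \cluster}\abs{\sp{d}{\pf_i}}$ by the cluster coherence. Using $d = \ProM d$, self-adjointness of $\ProM$, and the Parseval expansion $d = \sum_{j \in I}\sp{d}{\pf_j}\pf_j$, we get
\begin{equation}
\sum_{i \in \cluster}\abs{\sp{d}{\pf_i}} = \sum_{i \in \cluster}\abs{\sp{\ProM d}{\pf_i}} = \sum_{i \in \cluster}\Bigl\lvert\sum_{j \in I}\sp{d}{\pf_j}\sp{\ProM\pf_j}{\ProM\pf_i}\Bigr\rvert \leq \sum_{j \in I}\abs{\sp{d}{\pf_j}}\sum_{i \in \cluster}\abs{\sp{\ProM\pf_j}{\ProM\pf_i}} \leq \clustercoh{\cluster}{\ProM\PF}\,\anorm{d}{\PF},
\end{equation}
after swapping the order of summation and invoking Definition~\ref{def:framework:coherence}. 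Feeding this back gives $\anorm{d}{\PF} \leq 2\clustercoh{\cluster}{\ProM\PF}\anorm{d}{\PF} + 2\delta$, and the hypothesis $\clustercoh{\cluster}{\ProM\PF} < 1/2$ lets us absorb the first term on the left, producing exactly \eqref{eq:framework:errorl1min}. The main obstacle to watch is the finiteness bookkeeping: one must ensure $\anorm{d}{\PF} < \infty$ before rearranging (so that absorbing $2\clustercoh{\cluster}{\ProM\PF}\anorm{d}{\PF}$ is legitimate and not an $\infty - \infty$), which is why establishing $\sigrec \in \InpSp_{1,\PF}$ from optimality has to come first; the interchange of summation in the coherence step also needs the absolute convergence guaranteed by $d \in \InpSp_{1,\PF}$ together with $\ell^1$–$\ell^\infty$ bounds on the Gram entries.
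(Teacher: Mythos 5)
Your proof is correct and follows essentially the same route as the argument the paper relies on (it defers to the proof in \cite{king2014analysis}): the cone-type inequality obtained from minimality and $\delta$-clustered sparsity, followed by the cluster-coherence bound on the on-cluster coefficients of the error via $\ProM d = d$ and the Parseval expansion, and finally absorption of the coherence term using $\clustercoh{\cluster}{\ProM \PF} < 1/2$. Your attention to the finiteness of $\anorm{\sigrec - \sig}{\PF}$ before rearranging, and to the justification of the summation interchange, is exactly the right bookkeeping.
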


Theorem~\ref{thm:framework:errorl1min} provides intuitively good estimates if $\PF$ sparsifies $\sig$ and the size of the missing space, measured in terms of cluster coherence, is not too large.
More precisely, we indent to choose a cluster $\cluster$ such that $\sig$ becomes $\delta$-clustered sparse for small $\delta$ and, simultaneously, $\clustercoh{}{}$ shall not exceed the bound of $1/2$.
While the notions of cluster sparsity and cluster coherence were introduced independently in the previous subsection, both terms are now coupled in terms of $\cluster$.
In particular, a variation of this cluster reveals the essential trade-off relation: Enlarging $\cluster$ makes \eqref{eq:framework:errorl1min} true for smaller $\delta$, whereas $\clustercoh{\cluster}{\ProM \PF}$ increases and could exceed $1/2$ at some point.
Applying Theorem~\ref{thm:framework:errorl1min} therefore involves the fundamental task of choosing an appropriate cluster \cite{king2014analysis}.

In the following sections, the above abstract framework is transferred to the specific case of a continuous image model in $\InpSp = \Leb[\R^2]{2}$.
Based on Theorem~\ref{thm:framework:errorl1min}, we will finally present a convergence result in Section~\ref{sec:imginp} which rigorously proves the success of image inpainting via (universal) shearlets.

\section{Universal Shearlets}
\label{sec:shearlets}

It was already emphasized in the introductory part that shearlets provide an excellent, non-adaptive dictionary to represent anisotropic structures. In this section---which is essentially self-contained, and might be of interest on its own---we present a novel shearlet construction which overcomes the limitations of parabolic scaling.
The crucial feature of our new system is a generalized scaling matrix,\footnote{The $(\hor)$ stands for horizontal and its meaning will become clear in the sequel.}
\begin{equation}
\pscalcone{\aj}{\hor} = \matr{2^{2j} & 0 \\ 0 & 2^{\aj j}},
\end{equation}
where the \emph{scaling parameters} $\aj$ can be chosen independently for each scaling level $j$.
Compared with usual cone-adapted shearlet systems, this offers much more flexibility, and allows us to achieve an almost arbitrary scaling behavior.
The following definitions are based on the original construction of Guo and Labate in \cite{guo2013construction}.
In particular, our adaption still constitutes a smooth Parseval frame for $\Leb[\R^2]{2}$ and preserves the band-limiting property, as well as rapid decay in the spacial domain (cf. Theorem~\ref{thm:shearlets:unishprop}). An implementation of universal shearlets is provided in \url{www.shearlab.org}, see also \cite{kutyniok2014shearlab}.

Before presenting more technical details, let us recall the notion of \emph{rapidly decreasing} functions (or \emph{Schwartz functions})
\begin{equation}
\Schwartz[\R^d] := \Big\{ \defsf \in \Smooth[\R^d]{\infty} \suchthat \forall K, N \in \Nzero \colon \sup_{x \in \R^d} (1 + \abs{x}^2)^{-N/2} \sum_{\abs{\alpha} \leq K} \abs{\der{\alpha}\defsf(x)} < \infty \Big\}.
\end{equation}
For the sake of convenience, we will make use of the compact notation $\Schwartzpoly{x} := (1 + \abs{x}^2)^{-N/2}$. The \emph{Fourier transform}, given by
\begin{equation}
\ftop \colon \Schwartz[\R^d] \to \Schwartz[\R^d], \ \ft{\defsf}(\xi) :=  \integ[\R^d]{\defsf(x) e^{-2\pi i \xi^\T x}}{dx},
\end{equation}
is then a well-defined operator, which extends to an unitary operator form $\Leb[\R^d]{2}$ to $\Leb[\R^d]{2}$ (cf. \cite{grafakos2008classical}).

\subsection{Construction and Basic Properties}
\label{subsec:shearlets:construction}

The basic component of our construction is a function $\meyerscal \in \Schwartz[\R]$ satisfying $0\leq \ft\meyerscal \leq 1$, $\ft\meyerscal(u) = 1$ for $u \in \intvcl{-\frac{1}{16}}{\frac{1}{16}}$, and $\supp\ft\meyerscal \subset \intvcl{-\frac{1}{8}}{\frac{1}{8}}$.
A function with these properties is commonly known as a \emph{Meyer scaling function} (see Figure~\ref{fig:shearlets:meyerscalplot}).
Now, we define the \emph{corona scaling functions} for $j \in \Nzero$ by ($\xi = (\xi_1, \xi_2) \in \R^2$)
\begin{align}
	\ft\Scalfunc(\xi) &:= \ft\meyerscal(\xi_1)\ft\meyerscal(\xi_2), \\
	\Corofunc(\xi) &:= \sqrt{\ft\Scalfunc^2(2^{-2}\xi) - \ft\Scalfunc^2(\xi)}, \\
	\Corofunc_j(\xi) &:= \Corofunc(2^{-2j} \xi).
\end{align}
The definition of $\meyerscal$ implies that $\Corofunc_j$ is compactly supported in corona-shaped \emph{scaling levels}, i.e.,
\begin{equation} \label{eq:shearlets:corosupp}
	\supp\Corofunc_j \subset \Coro_j := \intvcl{-2^{2j-1}}{2^{2j-1}}^2 \setminus \intvop{-2^{2j-4}}{2^{2j-4}}^2,
\end{equation}
and---this is the crucial feature---the Fourier domain is decomposed by the sequence of scaling functions:
\begin{equation} \label{eq:shearlets:corotiling}
	\ft\Scalfunc^2(\xi) + \sum_{j \geq 0} \Corofunc_j^2(\xi) = 1, \quad \xi \in \R^2.
\end{equation}
For an illustration, we refer to Figure~\ref{fig:shearlets:coro:scalinglevels}.
The function $\Corofunc$ can be viewed as a wavelet, in the sense that it satisfies \eqref{eq:shearlets:corotiling}, which is a version of the \emph{discrete Calder\'{o}n condition} (cf. \cite{daubechies1992ten}).
In fact, this property is key to ensuring that our shearlet system is capable to detect edge singularities.

\begin{figure}
	\centering
	\subfigure[]{
		\centering
		\includegraphics[width=.3\textwidth]{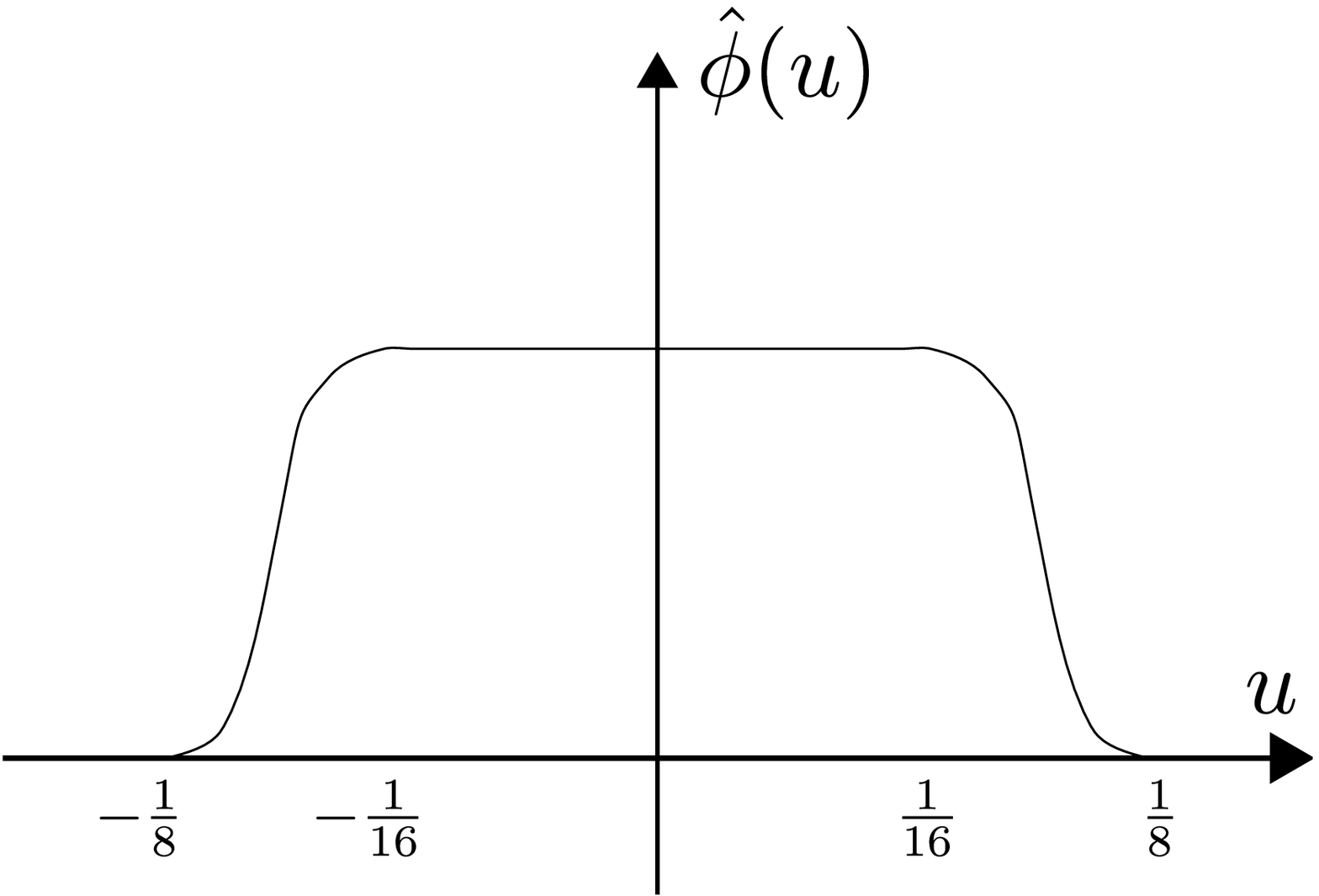}
		\label{fig:shearlets:meyerscalplot}
	}%
	\qquad
	\subfigure[]{
		\centering
		\includegraphics[width=.25\textwidth]{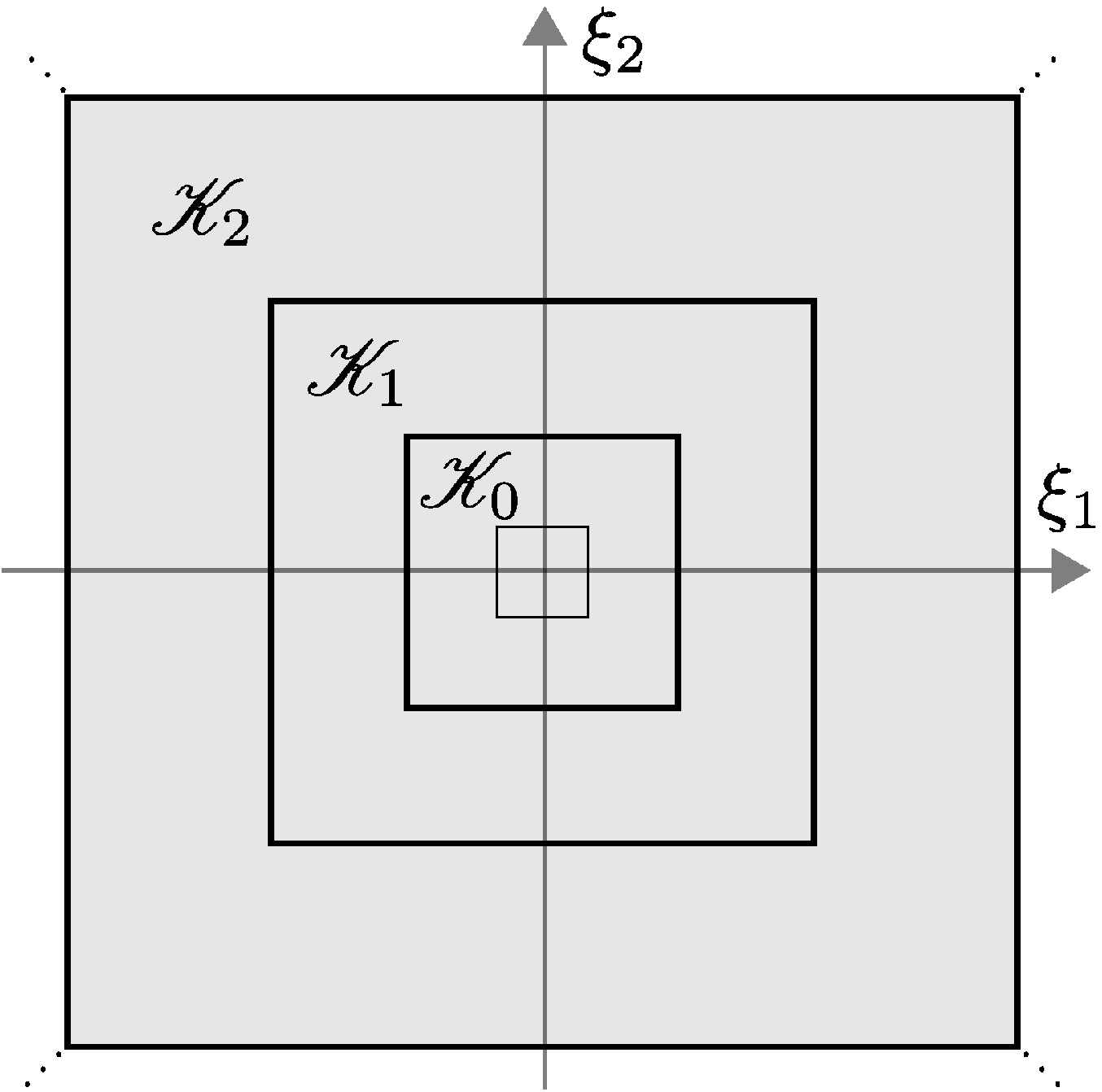}
		\label{fig:shearlets:coro:scalinglevels}
	}%
	\qquad
	\subfigure[]{
		\centering
		\includegraphics[width=.25\textwidth]{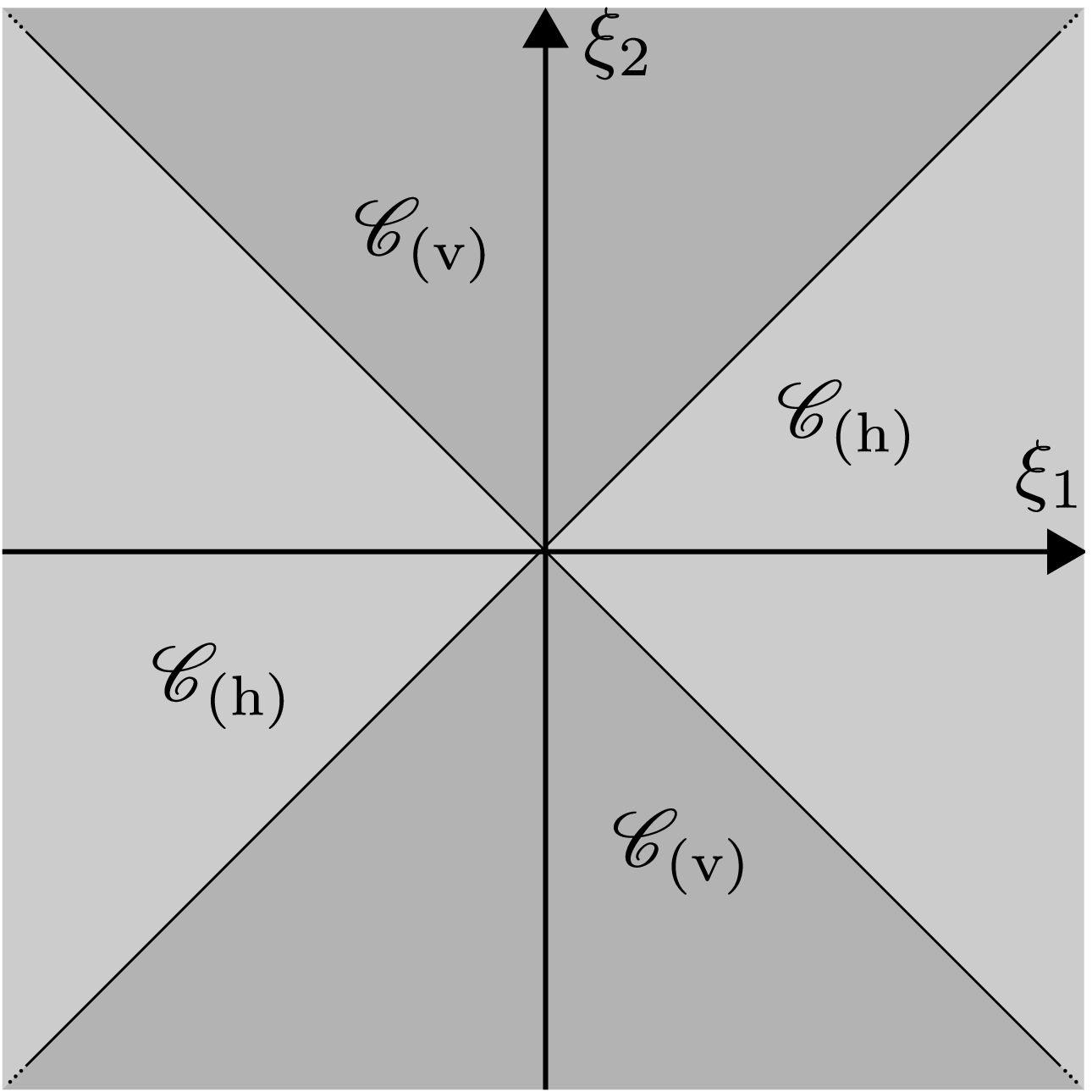}
		\label{fig:shearlets:cones}
	}
	\caption{Ingredients of the construction: \subref{fig:shearlets:meyerscalplot} Fourier transform of a Meyer scaling function.
	\subref{fig:shearlets:coro:scalinglevels} Decomposition of the frequency plane by corona functions $\Corofunc_j$ and $\Scalfunc$. Note that the corona shapes $\Coro_j$ could slightly overlap.
	\subref{fig:shearlets:cones} Symmetric frequency decomposition by cones.}
	\label{fig:shearlets:coro}
\end{figure}

Next, we consider a \emph{bump-like} function $\conefunc \in \Smooth[\R]{\infty}$ which satisfies $\supp \conefunc \subset \intvcl{-1}{1}$ and
\begin{align}
	&\abs{\conefunc(u -1)}^2 + \abs{\conefunc(u)}^2 + \abs{\conefunc(u+1)}^2 = 1 \text{ for $u \in \intvcl{-1}{1}$, and} \label{eq:shearlets:bump1} \\
	&\conefunc(0) = 1 \text{ and } \conefunc^{(n)}(0) = 0 \text{ for $n \geq 1$.} \label{eq:shearlets:bump2}
\end{align}
An explicit construction of $\conefunc$ can be found in \cite{guo2007optimally}.
Compared to $\Corofunc$, the actual role of $\conefunc$ will be to produce the directional scaling feature of the system.

The simplicity of the shearlet system that was defined in the introductory part of this work (see \eqref{eq:intro:shearletsystem}) causes a directional bias which is already recognizable in Figure~\ref{fig:intro:tiling:classical}: For large shearing parameters $l$, the corresponding elements do increasingly elongate along the $\xi_2$-axis (in the Fourier domain).
This drawback will lead to problems when approximating horizontal edges and renders the system ineffective for practical applications \cite{guo2013construction}.
To circumvent this problem, one first restricts the number of shears such that the remaining system is contained in the \emph{horizontal frequency cone}
\begin{equation}
\Cone{\hor} := \left\{ (\xi_1, \xi_2) \in \R^2 \suchthat \abs[auto]{\tfrac{\xi_2}{\xi_1}} \leq 1 \right\}.
\end{equation}
Then, by interchanging the variables $\xi_1$ and $\xi_2$, we can define a flipped system on the \emph{vertical frequency cone}
\begin{equation}
	\Cone{\ver} := \left\{ (\xi_1, \xi_2) \in \R^2 \suchthat \abs[auto]{\tfrac{\xi_1}{\xi_2}} \leq 1 \right\}.
\end{equation}
This \emph{cone-based approach}, initially introduced in \cite{GKL05}, provides a much more symmetric frequency tiling (see also Figure~\ref{fig:shearlets:cones}),
and has turned out to be suited for both theoretical analysis and applications (cf. \cite{KL2012}).

To give a precise definition of a cone-adapted shearlet system, we still need to introduce adapted versions of the usual shearing and scaling matrix,
\begin{alignat}{3}
	\pscalcone{\alpha}{\hor} &:= \matr{2^2 & 0 \\ 0 & 2^\alpha}, &\quad
	\shearcone{\hor} &:= \matr{1 & 1 \\ 0 & 1}, \\
	\pscalcone{\alpha}{\ver} &:= \matr{2^\alpha & 0 \\ 0 & 2^2}, &\quad
	\shearcone{\ver} &:= \matr{1 & 0 \\ 1 & 1},
\end{alignat}
where $\alpha \in \intvop{-\infty}{2}$ is the \emph{scaling parameter}.
Similarly, the adapted \emph{cone functions} are given by
\begin{equation}
	\Conefunc{\hor}(\xi) := \conefunc\opleft( \tfrac{\xi_2}{\xi_1} \opright), \quad \Conefunc{\ver}(\xi) := \conefunc\opleft( \tfrac{\xi_1}{\xi_2} \opright), \quad \xi \in \R^2.
\end{equation}

Now, we are able to define the single ingredients of a universal shearlet system; notice that the
definition of a universal shearlet system will then follow in Definition~\ref{def:shearlets:unishsystem}.
The respective parts of the following definition are visualized in Figure~\ref{fig:shearlets:unishdef}.
\begin{definition}
\label{def:shearlets:unishdef}
Let $\Scalfunc, \Corofunc, \Conefunc{\hor}, \Conefunc{\ver} \in \Leb[\R^2]{2}$ be defined as before.
\begin{enumerate}
\item
	\emph{Coarse scaling functions}:
	For $k \in \Z^2$, we set
	\begin{equation}
		\unishplain_{-1,k} (x) := \Scalfunc(x - k), \quad x \in \R^2.
	\end{equation}
\item
	\emph{Interior shearlets}:
	Let $\alpha \in \intvop{-\infty}{2}$, $j \in \Nzero$, $l \in \Z$ with $\abs{l} < 2^{(2-\alpha) j}$, $k \in \Z^2$ and $\dir \in \{\hor, \ver\}$.
	Then we define $\unish[\dir]{\alpha}{j}{l}{k}$ by its Fourier representation,
	\begin{equation}\label{eq:shearlets:unishdef:interior}
		\unishft[\dir]{\alpha}{j}{l}{k} (\xi) := 2^{-(2+\alpha)j/2} \Corofunc( 2^{-2j}\xi ) \Conefunc{\dir}\opleft( \xi^\T \pscalcone{\alpha}{\dir}^{-j} \shearcone{\dir}^{-l} \opright) e^{-2\pi i \xi^\T \pscalcone{\alpha}{\dir}^{-j} \shearcone{\dir}^{-l} k}, \quad \xi \in \R^2.
	\end{equation}
\item
	\emph{Boundary shearlets}:
	For $\alpha \in \intvop{-\infty}{2}$, $j \geq 1$, $l = \pm \ceil{2^{(2-\alpha) j}}$ and $k \in \Z^2$, we define
	\begin{equation}\label{eq:shearlets:unishdef:boundary}
		\unishft{\alpha}{j}{l}{k} (\xi) := \begin{cases}
			2^{-\frac{(2+\alpha)j}{2}-\frac{1}{2}} \Corofunc( 2^{-2j}\xi )\Conefunc{\hor}\opleft( \xi^\T \pscalcone{\alpha}{\hor}^{-j} \shearcone{\hor}^{-l} \opright) e^{-\pi i \xi^\T \pscalcone{\alpha}{\hor}^{-j} \shearcone{\hor}^{-l} k}, & \xi \in \Cone{\hor}, \\
			2^{-\frac{(2+\alpha)j}{2}-\frac{1}{2}} \Corofunc( 2^{-2j}\xi )\Conefunc{\ver}\opleft( \xi^\T \pscalcone{\alpha}{\ver}^{-j} \shearcone{\ver}^{-l} \opright) e^{-\pi i \xi^\T \pscalcone{\alpha}{\hor}^{-j} \shearcone{\hor}^{-l} k}, & \xi \in \Cone{\ver},
		\end{cases}
	\end{equation}
	and in the case $j = 0$, $l = \pm1$, we put
	\begin{equation}
		\unishft{\alpha}{0}{l}{k} (\xi) := \begin{cases}
			\Corofunc( \xi )\Conefunc{\hor}\opleft( \xi^\T \shearcone{\hor}^{-l} \opright) e^{-2\pi i \xi^\T k}, & \xi \in \Cone{\hor}, \\
			\Corofunc( \xi )\Conefunc{\ver}\opleft( \xi^\T \shearcone{\ver}^{-l} \opright) e^{-2\pi i \xi^\T k}, & \xi \in \Cone{\ver}.
		\end{cases}
	\end{equation}
\end{enumerate}
\end{definition}

\begin{figure}
	\centering
	\includegraphics[width=0.3\textwidth]{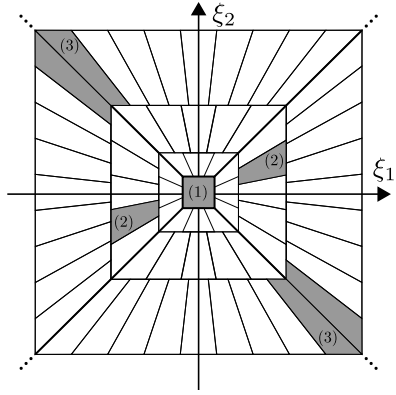}
	\caption{Illustration of the Fourier supports of the different components.
	(1) Coarse scaling part covering low frequencies.
	(2) Interior shearlets entirely contained in the interior of the cones.
	(3) Boundary shearlets ``glued'' together along the boundaries of the cones.}
	\label{fig:shearlets:unishdef}
\end{figure}

We remark that the case $\alpha = 1$ produces exactly the original parabolic-scaling construction of \cite{guo2013construction}.
Compared to the Fourier transform of the ``classical'' shearlet system $\{\unishplain_{j,l,k} \suchthat j \in \Z, l \in \Z, k \in \Z^2\}$ in Subsection~\ref{subsec:intro:shearlets}, the interior shearlets of equation \eqref{eq:shearlets:unishdef:interior} are not \emph{shear-invariant}, meaning that $\Corofunc_j$ contains no shearing term.
Hence, the system of Definition~\ref{def:shearlets:unishdef} has no chance to be singly generated.
However, $\unishft[\hor]{\alpha}{j}{l}{k}$ has compact support in the trapezoidal region
\begin{equation} \label{eq:shearlets:unishdef:trapezoidsupport}
	\left\{ \xi \in \R^2 \suchthat \xi_1 \in \intvcl{-2^{2j-1}}{2^{2j-1}} \setminus \intvop{-2^{2j-4}}{2^{2j-4}}, \abs[auto]{\tfrac{\xi_2}{\xi_1} - l 2^{-(2-\alpha)j}} \leq 2^{-(2-\alpha)j} \right\},
\end{equation}
which essentially coincides with $\supp \unishplainft_{j,l,k}$ at least when $\alpha = 1$.
Thus, $\unish[\hor]{\alpha}{j}{l}{k}$ and $\unishplain_{j,l,k}$ have virtually the same space-frequency behavior.
For a more detailed discussion see also \cite{guo2013construction}.

Up to now, it is quite unclear if the elements of Definition~\ref{def:shearlets:unishdef} form a (Parseval) frame for $\Leb[\R^2]{2}$.
A rather simple approach to satisfy the frame property would be to project the system onto the subspaces
\begin{equation}
\ift[long]{\Leb[\Cone{\dir}]{2}} = \{ f \in \Leb[\R^2]{2} \suchthat \supp\ft{f} \subset \Cone{\dir} \}, \quad \dir = \hor, \ver,
\end{equation}
corresponding to a rough truncation of the Fourier supports.
Then, one can show that these truncated systems constitute frames for $\ift[long][normal]{\Leb[\Cone{\dir}]{2}}$, respectively (cf. \cite{guo2013construction}); their union indeed yields a frame for the whole $\Leb[\R^2]{2}$.
However, cutting-off along the boundaries of the cones causes discontinuities which finally lead to bad decay behavior in space.

It is therefore much more favorable to consider the system on the entire space $\Leb[\R^2]{2}$.
Due to the piecewise definition of the boundary shearlets, smoothness (of the Fourier representations) is not guaranteed for all $\alpha \in \intvop{-\infty}{2}$.
It turns out that the exponent of $2^{(2-\alpha)j}$ needs necessarily to be integer-valued when analyzing the first partial derivatives of \eqref{eq:shearlets:unishdef:boundary}---this is explicitly done in the proof of Theorem~\ref{thm:shearlets:unishprop} (cf. equation \eqref{eq:proofs:unishprop:partderhor} and \eqref{eq:proofs:unishprop:partderver}).
To satisfy this for every $j \in \Z$, we would have to restrict the set of admissible $\alpha$ to the set of integers; but this would completely burst the major goal of arbitrary scaling behavior.

The key idea to overcome this problem is to relax the condition of a globally fixed $\alpha$ and, instead of that, to introduce a \emph{separate} scaling parameter $\aj$ on \emph{each} scale.
Thus, the universal shearlet system will be associated with a whole sequence $(\aj)_{j \in \Nzero}$.
In this looser setting, the set of admissible scaling parameters can be substantially enlarged because we have $(2-\aj)j \in \Z$ whenever $\aj$ is a multiple of $1/j$.
This approach of discretization naturally gives rise to the following notion:

\begin{definition}
\label{def:shearlets:scalingsequence}
A sequence $(\aj)_{j \in \Nzero} \subset \R$ is called a \emph{scaling sequence} if
\begin{equation}
	\aj \in \Scalparamdomain_j := \left\{ \tfrac{m}{j} \suchthat m \in \Z, m \leq 2j-1 \right\} = \left\{ \dotsc, -\tfrac{2}{j}, -\tfrac{1}{j}, 0, \tfrac{1}{j}, \dotsc, 2 - \tfrac{1}{j} \right\}
\end{equation}
for $j \geq 1$ and $\alpha_0 = 0$.
\end{definition}

Now, we are able to give the main definition of this section:
\begin{definition}
\label{def:shearlets:unishsystem}
Let $(\aj)_{j \in \Nzero}$ be a scaling sequence.
Then we define the associated \emph{universal-scaling shearlet system}, or shorter, \emph{universal shearlet system}, by
\begin{equation}
	\Unish := \UnishLow \union \UnishInt \union \UnishBound,
\end{equation}
where
\begin{align}
	& \UnishLow := \left\{ \unishplain_{-1,k} \suchthat k \in \Z^2 \right\}, \\
	& \UnishInt := \left\{ \unish[\dir]{\aj}{j}{l}{k} \suchthat j \geq 0, \abs{l} < 2^{(2-\aj) j}, k \in \Z^2, \dir \in \{\hor,\ver \} \right\}, \\
	& \UnishBound := \left\{ \unish{\aj}{j}{l}{k} \suchthat j \geq 0, \abs{l} = \pm 2^{(2-\aj) j}, k \in \Z^2  \right\}.
\end{align}
\end{definition}

Notice that the elements of a scaling sequence $\aj$ can be chosen independently for each scaling level and, in particular, the sequence $(\aj)_j$ does not need to converge.
Interestingly, this additional degree of freedom allows for a unification of the formal shearlet definition and its numerical implementation.
In practice, one usually determines the total number of shearings (on each scale) first, rather then selecting a fixed $\alpha$.
The scaling parameter is then implicitly given and can be computed by an easy relation (cf. \cite{kutyniok2014shearlab}).
But since the amount of shearings is necessarily integer-valued, the resulting $\aj$ might not coincide for all $j$.
According to this phenomenon, the formal concept of a scaling sequence matches precisely to what is done in practice.

The following theorem collects the most important properties of universal shearlets.
In particular, all systems provide excellent localization in both Fourier and spatial domain.
We postpone the proof to Subsection~\ref{subsec:proofs:unishprop}.
\begin{theorem}
\label{thm:shearlets:unishprop}
Let $(\aj)_{j}$ be a scaling sequence and $\Unish$ be an associated universal shearlet system.
Then $\Unish$ constitutes a Parseval frame for $\Leb[\R^2]{2}$ consisting of band-limited Schwartz functions.
Moreover, the interior and boundary shearlets have infinitely many vanishing moments.
\end{theorem}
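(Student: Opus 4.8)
The plan is to establish the four assertions of the theorem --- band-limitedness, the Schwartz property, infinitely many vanishing moments, and the Parseval frame property --- roughly in that order, adapting the construction of Guo and Labate~\cite{guo2013construction} to the scale-dependent exponents $\aj$. Band-limitedness and the vanishing moments are read off the definitions. Every element of $\Unish$ is prescribed through its Fourier transform as a product of a corona factor $\Corofunc(2^{-2j}\cdot)$ (a product of Meyer scaling functions in the case of $\UnishLow$), a cone factor $\Conefunc{\dir}(\cdot\,\pscalcone{\aj}{\dir}^{-j}\shearcone{\dir}^{-l})$, and a unimodular exponential; since $\Corofunc$, $\conefunc$ and $\meyerscal$ are compactly supported, so is each $\ft{\unishplain}$, which is band-limitedness. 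Moreover, by~\eqref{eq:shearlets:corosupp} the support of $\Corofunc_j$ avoids a fixed neighbourhood of the origin, so for every interior and boundary shearlet $\ft{\unishplain}$ vanishes to infinite order at $\xi = 0$; since $\der{\beta}\ft{\unishplain}(0)$ equals, up to the nonzero constant $(-2\pi i)^{\abs{\beta}}$, the moment $\integ[\R^2]{x^\beta \unishplain(x)}{dx}$, this is precisely the asserted infinitude of vanishing moments.

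For the Schwartz property it suffices to show that each generator has Fourier transform in $\Schwartz[\R^2]$, because $\ftop$ maps $\Schwartz[\R^2]$ onto itself and a compactly supported $C^\infty$ function is Schwartz; so only global smoothness of the Fourier representations has to be checked. For the coarse scaling functions and the interior shearlets this is immediate: the relevant arguments are linear in $\xi$, and on the support of $\Corofunc(2^{-2j}\cdot)$ the first coordinate of $\xi^\T\pscalcone{\aj}{\dir}^{-j}\shearcone{\dir}^{-l}$ stays bounded away from $0$, so $\Conefunc{\dir}$ is composed with a $C^\infty$ map. The delicate case is the boundary shearlets of~\eqref{eq:shearlets:unishdef:boundary}, glued from two branches on $\Cone{\hor}$ and $\Cone{\ver}$: one must verify that the two pieces, together with all their partial derivatives, agree along the seam $\{\abs{\xi_2/\xi_1}=1\}$. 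Inspecting the first partials of the two branches shows that agreement on the seam forces the exponent of $2^{(2-\aj)j}$ to be integer-valued --- exactly what is guaranteed by the requirement $\aj\in\Scalparamdomain_j$ in Definition~\ref{def:shearlets:scalingsequence}, which yields $(2-\aj)j\in\Z$ --- whereupon $\shearcone{\dir}^{\mp 2^{(2-\aj)j}}$ carries the boundary ray of one cone onto that of the other and the flatness $\conefunc^{(n)}(0)=0$ from~\eqref{eq:shearlets:bump2} handles all higher-order matching. I expect this seam-regularity computation to be the main technical obstacle of the whole proof.

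For the Parseval frame property one uses Plancherel, $\sp{f}{\unishplain}=\sp{\ft f}{\ft{\unishplain}}$, and sums $\abs{\sp{\ft f}{\ft{\unishplain}}}^2$ over the system in three stages. First, fixing a scale $j$, a cone $\dir$ and a shear $l$, the Fourier support of the corresponding generator lies in a box on which $\{\,\abs{\det\pscalcone{\aj}{\dir}^{-j}}^{1/2}e^{-2\pi i\xi^\T\pscalcone{\aj}{\dir}^{-j}\shearcone{\dir}^{-l}k}\,\}_{k\in\Z^2}$ is an orthonormal basis, and the normalization $2^{-(2+\aj)j/2}$ --- together with the extra $2^{-1/2}$ and the halved modulation $e^{-\pi i(\cdot)}$ on boundary elements --- is calibrated so that summing over $k\in\Z^2$ yields $\integ[\R^2]{\abs{\ft f(\xi)}^2\abs{\Corofunc(2^{-2j}\xi)}^2\abs{\Conefunc{\dir}(\xi^\T\pscalcone{\aj}{\dir}^{-j}\shearcone{\dir}^{-l})}^2}{d\xi}$, weighted by $1/2$ for boundary terms. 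Second, summing over the interior shears $\abs l<2^{(2-\aj)j}$, the boundary shears $l=\pm 2^{(2-\aj)j}$, and $\dir\in\{\hor,\ver\}$, the bump relation~\eqref{eq:shearlets:bump1}, $\sum_n\abs{\conefunc(u-n)}^2=1$, collapses the cone factors, the $1/2$-weights exactly compensating the double counting of the two trapezoids straddling each cone seam; this leaves $\integ[\R^2]{\abs{\ft f(\xi)}^2\Corofunc_j^2(\xi)}{d\xi}$ for each $j\ge0$, while $\UnishLow$ contributes $\integ[\R^2]{\abs{\ft f(\xi)}^2\ft\Scalfunc^2(\xi)}{d\xi}$ (since $\supp\ft\Scalfunc$ fits in a fundamental domain of $\Z^2$). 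Third, summing over $j\ge0$ and invoking the corona tiling~\eqref{eq:shearlets:corotiling}, $\ft\Scalfunc^2(\xi)+\sum_{j\ge0}\Corofunc_j^2(\xi)=1$, one obtains $\sum_{\unishplain\in\Unish}\abs{\sp{\ft f}{\ft{\unishplain}}}^2=\integ[\R^2]{\abs{\ft f(\xi)}^2}{d\xi}=\norm{f}^2$, which is the Parseval identity. The points needing care here are the geometric bookkeeping at the cone boundaries --- that interior and boundary shears tile each corona-restricted cone with no gap and exactly the accounted overlap, which again uses $2^{(2-\aj)j}\in\Z$ --- and checking that the argument of $\conefunc$ stays in $\intvcl{-1}{1}$ on the relevant supports so that~\eqref{eq:shearlets:bump1} applies.
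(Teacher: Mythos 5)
Your outline follows the paper's proof essentially step for step: band-limitedness and the vanishing moments are read off the corona supports, the Schwartz property reduces to matching all partial derivatives of the two branches of the boundary shearlets along the seam $\{\abs{\xi_1}=\abs{\xi_2}\}$ (which is exactly where $(2-\aj)j\in\Z$ and the flatness $\conefunc^{(n)}(0)=0$ enter), and the Parseval identity is obtained from Plancherel, a lattice of exponentials adapted to each generator's Fourier support, the bump relation \eqref{eq:shearlets:bump1} and the corona tiling \eqref{eq:shearlets:corotiling}.

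The one place where your accounting does not match what actually happens is the boundary terms in the Parseval computation. You describe the sum over $k$ for a boundary element as producing the two full-plane integrals each ``weighted by $1/2$,'' with these weights ``compensating the double counting'' across the seam. Taken literally this does not close the partition of unity: at a point $\xi$ in the interior of $\Cone{\hor}$ near the seam, the interior shears contribute $\abs{\conefunc(\lmax\tfrac{\xi_2}{\xi_1}-l)}^2$ for $\abs{l}<\lmax$, and \eqref{eq:shearlets:bump1} requires the boundary element to supply the \emph{full} missing term $\abs{\conefunc(\lmax(\xi_2/\xi_1-1))}^2$, not $\tfrac12\bigl[\abs{\conefunc(\lmax(\xi_2/\xi_1-1))}^2+\abs{\conefunc(\lmax(\xi_1/\xi_2-1))}^2\bigr]$; these differ off the seam since $\xi_2/\xi_1-1\neq-(\xi_1/\xi_2-1)$. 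The actual mechanism (as in the paper's Case~1) is different: the halved modulation $e^{-\pi i(\cdot)}$ and the factor $2^{-1/2}$ serve to adapt the exponential lattice to the \emph{enlarged} support of the glued element (the union of the two trapezoids, which after the substitution $\eta^\T=2^{-1}\xi^\T\pscalcone{\aj}{\hor}^{-j}\shearcone{\hor}^{-\lmax}$ fits into a single fundamental cube), and then summing over $k$ yields $\int_{\Cone{\hor}}\abs{\ft f}^2\abs{\Corofunc(2^{-2j}\xi)}^2\abs{\conefunc(\lmax(\xi_2/\xi_1-1))}^2\,d\xi+\int_{\Cone{\ver}}\abs{\ft f}^2\abs{\Corofunc(2^{-2j}\xi)}^2\abs{\conefunc(\lmax(\xi_1/\xi_2-1))}^2\,d\xi$ with coefficient one on each piece, the cross term vanishing because the two cones overlap only on a null set. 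So each branch enters its own cone's partition of unity with full weight, and no $1/2$ appears in the final identity. Since you flagged the boundary bookkeeping as the point needing care, this is a correction to the heuristic rather than to the strategy, but as stated that step would not produce $\norm{f}^2$ exactly.
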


To illustrate the flexibility of our new construction, we finally present two natural choices of $(\aj)_j$ inducing well-known representation systems.

\subsection{\texorpdfstring{$\alpha$}{alpha}-Shearlets}
\label{subsec:shearlets:alphashearlet}

Let $\alpha \in (-\infty, 2)$, and recall the definition of $\Scalparamdomain_j$ from Definition~\ref{def:shearlets:scalingsequence}.
We then choose $(\aj)_j$ such that it provides the best possible approximation of $\alpha$, i.e.,
\begin{equation}
	\aj := \argmin_{\tilde\alpha_j \in \Scalparamdomain_j}\abs{\tilde\alpha_j - \alpha}, \quad j \geq 1.
\end{equation}
One easily verifies $2^{\aj j} \in \asympeq{2^{\alpha j}}$ as $j \to \infty$.
Hence, the corresponding system $\Unish$ has asymptotically the same scaling behavior as \emph{$\alpha$-shearlets},
see \cite{KLL2012,Kei2012} and more generally \cite{GKKS14}.
If $\alpha = 1$, we exactly obtain the cone-adapted shearlet frame from \cite{guo2013construction}.
On the other hand, the choice of a small (possibly negative) $\alpha$ produces more anisotropic elements; letting $\aj \to -\infty$ this could even give rise to an approximation of the ``ridgelet case''.

\subsection{Wavelets}
\label{subsec:shearlets:wavelet}

\begin{figure}
	\centering
	\includegraphics[width=0.3\textwidth]{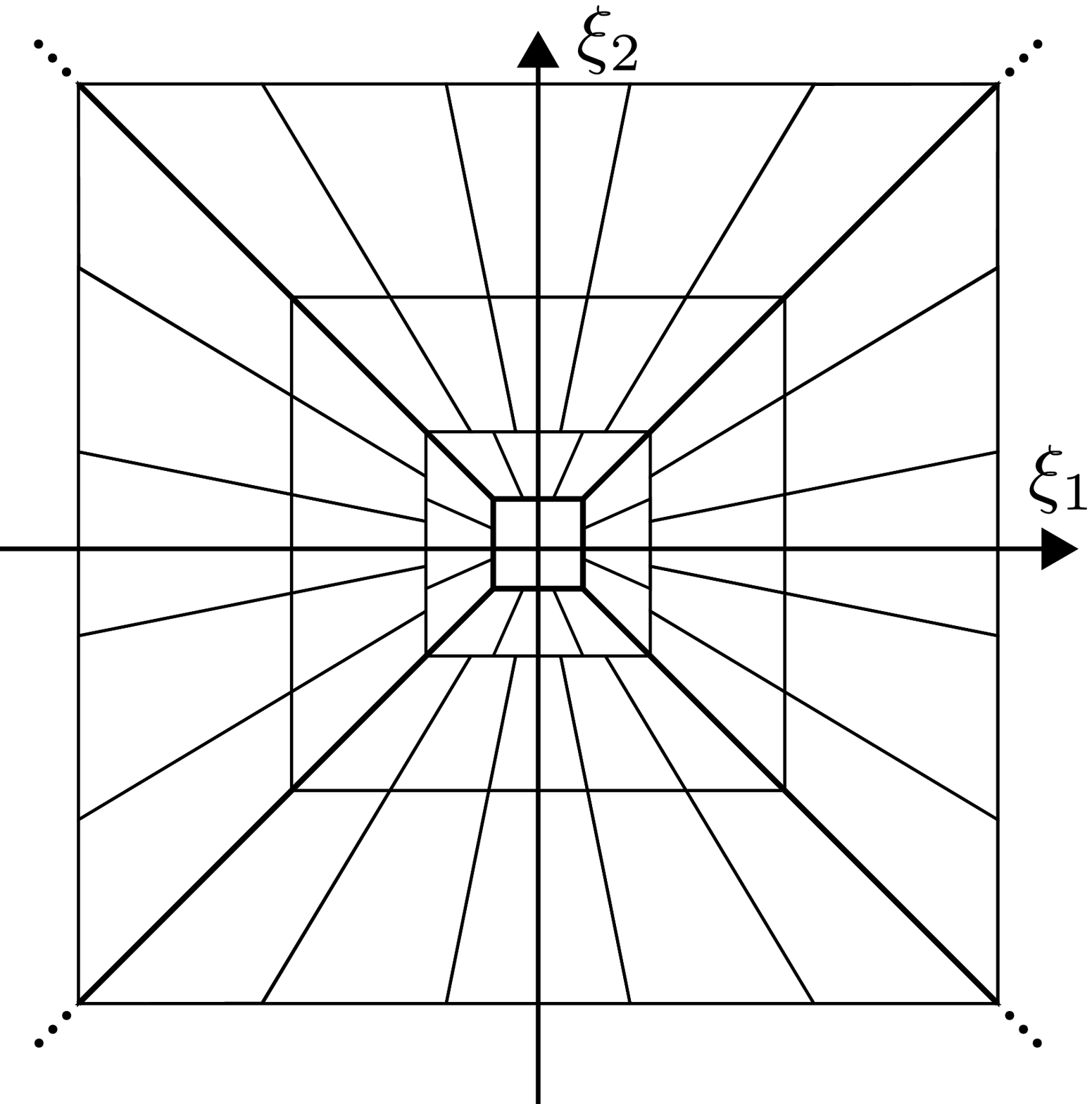}
	\caption{Frequency tiling obtained by $\Unish$ in the wavelet case.}
	\label{fig:shearlets:wavelet}
\end{figure}

While the original scaling parameter $\alpha$ had to be strictly less than $2$, it is now possible to reach this limit case.
Concretely, we choose $(\aj)_j$ as large as possible, namely $\aj := 2 - \frac{1}{j}$ for $j \geq 1$.
Then we have $2^{\aj j} \in \asympeq{2^{2 j}}$ implying that the elements of $\Unish$ scale in an \emph{isotropic} fashion.
Furthermore, the number of shears is constant (equal to $10$) on each scaling level (see Figure~\ref{fig:shearlets:wavelet}).
Therefore, $\Unish$ can be viewed as a special kind of a wavelet frame.
This particularly allows us to treat wavelets and shearlets in a uniform fashion and to compare their respective (inpainting) analysis directly.

\section{Image Model on \texorpdfstring{$\Leb[\R^2]{2}$}{L\texttwosuperior(R\texttwosuperior)}}
\label{sec:model}

\subsection{Line Distribution and Mask}
\label{subsec:model:distr}

As already motivated in the first section, we intend to analyze the model of a corrupted line segment.
From a theoretical point of view, such a line object shall not have any spatial widening (i.e., no thickness) and its support shall be parametrizable by a curve in $\R^2$.
It would not be clear how to distinguish surfaces and curvilinear structures otherwise.
However, one-dimensional objects have measure zero in $\R^2$ and cannot be directly represented by $\Leb{2}$-functions.
It is therefore convenient to work in the space of tempered distributions $\Tempdistr[\R^2]$, which is the set of all linear and continuous functionals on $\Schwartz[\R^2]$.
To adapt the example of seismic image data, we follow \cite{king2014analysis} again, and consider a compactly supported edge singularity along the $x_1$-axis.

Let $0 \neq \weight \in \Smooth[\R]{\infty}$ be a \emph{weighting function} satisfying $0 \leq \weight \leq 1$ and $\supp\weight \subset \intvcl{-\wlen}{\wlen}$ for some fixed $\wlen > 0$.
The \emph{weighted edge singularity} $\model \in \Tempdistr[\R^2]$ is then given by the product of $\weight$ and a line distribution along the $x_1$-axis,
\begin{equation}
	\distrinp{\model}{\defsf} := \distrinp{\Linedistr}{\weight\defsf} := \integ[-\wlen][\wlen]{\weight(x_1) \defsf(x_1, 0)}{dx_1}, \quad \defsf \in \Schwartz[\R^2].
\end{equation}
From this definition, it immediately follows that $\model$ has compact support on $\intvcl{-\wlen}{\wlen} \times \{0\}$.
The ``graph'' of $\model$ is sketched in Figure~\ref{fig:model:distr:edgesingularitygraph}.

\begin{figure}
	\centering
	\subfigure[]{
		\centering
		\includegraphics[width=.3\textwidth]{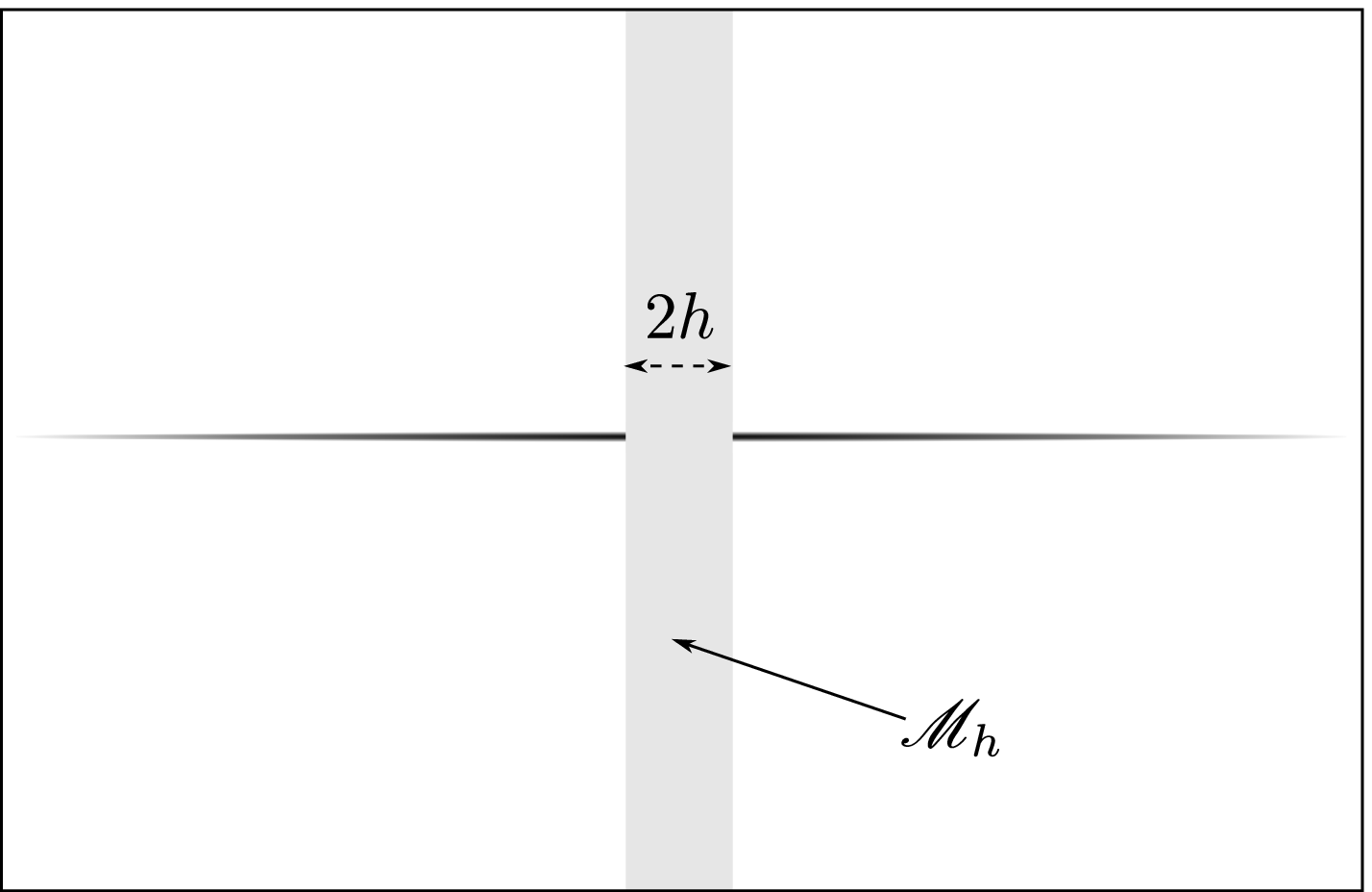}
		\label{fig:model:distr:edgesingularity}
	}%
	\qquad
	\subfigure[]{
		\centering
		\includegraphics[width=.3\textwidth]{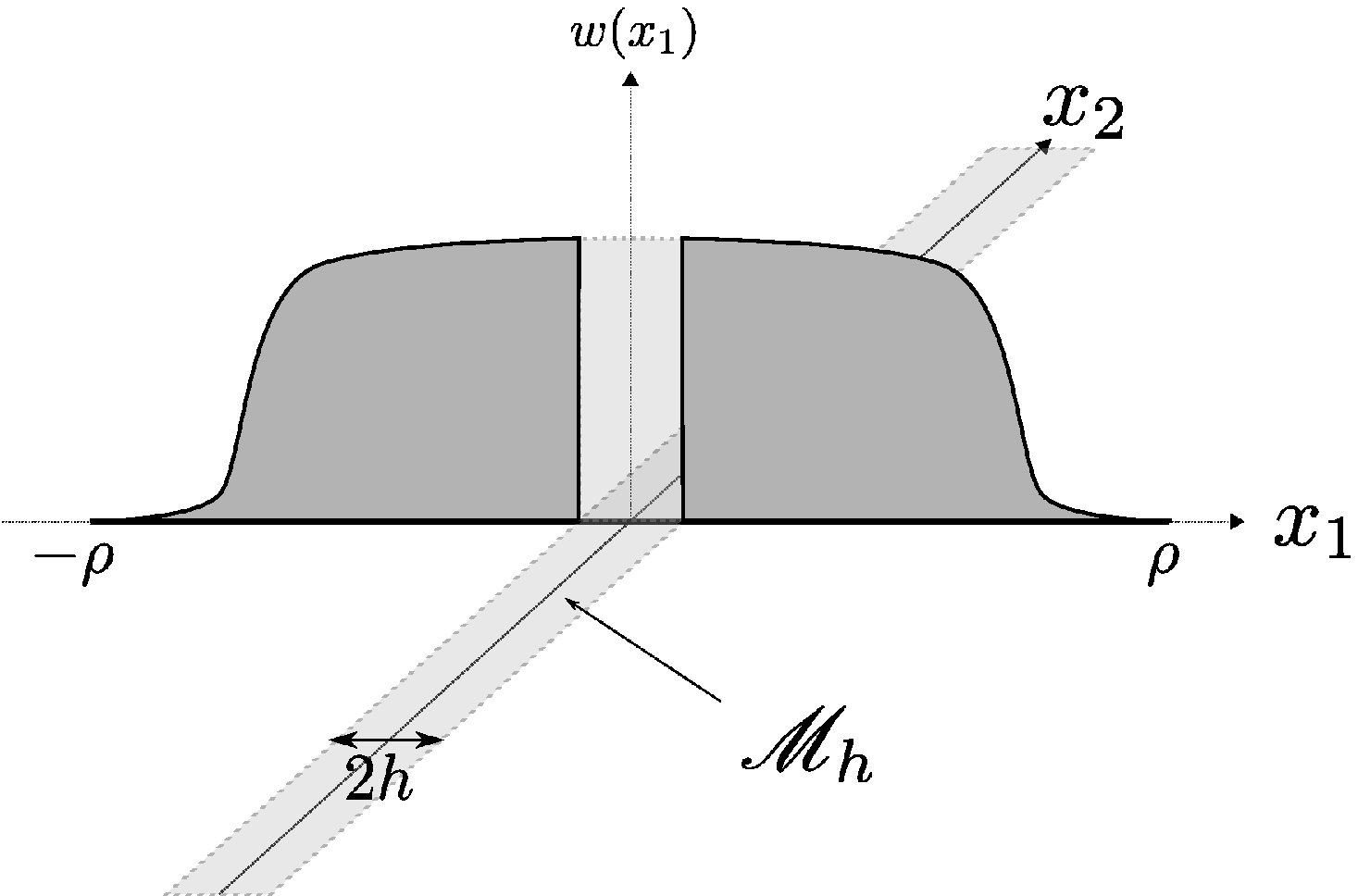}
		\label{fig:model:distr:edgesingularitygraph}
	}%
	\caption{\subref{fig:model:distr:edgesingularity} Sketch of the corrupted modeling image.
	Note that the line singularity has no ``thickness'' but some (gray-scale) intensity.
	\subref{fig:model:distr:edgesingularitygraph} ``Graph'' of the corrupted line distribution $\ProK\model$, which is compactly supported on the $x_1$-axis.}
	\label{fig:model:distr}
\end{figure}

We will frequently work with Fourier representations in the forthcoming section.
Thus, we already compute the Fourier transform of $\model$ at this point:
\begin{equation}
	\distrinp{\ft\model}{\defsf}
	=\distrinp{\model}{\ft\defsf}
	= \integ[-\wlen][\wlen]{\weight(x_1) \left( \integ[\R^2]{\defsf(\xi)e^{-2\pi i \xi_1 x_1}}{d\xi} \right)}{dx_1} = \integ[\R^2]{\ft\weight(\xi_1) \defsf(\xi)}{d\xi}, \quad \defsf \in \Schwartz[\R^2].
\end{equation}
This particularly shows that $\ft\model$ is a regular distribution with $\ft\model(\xi) = \ft\weight(\xi_1)$, and $\ft\model \in \Smooth[\R^2]{\infty}[b]$.
However, it has no decay in the $\xi_2$-direction and $\model$ ``carries'' therefore arbitrarily high frequencies.

To specify the destroyed region of our image model, we let us be inspired by the motivating example of seismic data once more: We cut off a small
vertical strip around the $x_2$-axis which is given by the symmetric \emph{mask}
\begin{equation}
	\mask{\mdiam} := \{ (x_1, x_2) \in \R^2 \suchthat \abs{x_1} \leq \mdiam \}
\end{equation}
of diameter $2\mdiam > 0$ (see Figure~\ref{fig:model:distr} for visualization).
The gap space is then set to $\InpSpM = \Leb[\mask{\mdiam}]{2}$.
Note that, for this choice, the orthogonal projections can be written in terms of characteristic functions:
\begin{equation}
	\ProM f = \indset{\mask{\mdiam}} \hphantom{\cdot} f, \quad \ProK f = \indset{\setcompl{\mask{\mdiam}}} \hphantom{\cdot} f, \quad f \in \Leb[\R^2]{2}.
\end{equation}

\subsection{Filter Decomposition}
\label{subsec:model:decomp}

Due to the fact that the energy of $\model$ is infinite, we need to break this model down to $\Leb[\R^2]{2}$ in a reasonable manner.
For the sake of convenience, this decomposition should be also compatible with the universal shearlet systems, which will serve as underlying Parseval frames for $\l{1}$-inpainting (cf. Algorithm~\ref{algo:framework:l1min}).
According to that, we introduce a sequence of \emph{frequency filters} $\Corofilter_j \in \Schwartz[\R^2]$, $j \geq 0$, which are defined by the inverse Fourier transform of the corona functions from the shearlet construction (cf. Subsection~\ref{subsec:shearlets:construction}), i.e.,
\begin{equation}
	\ft\Corofilter_j(\xi) := \Corofunc_j(\xi) = \Corofunc(2^{-2j} \xi), \quad \xi \in \R^2, \quad j \geq 0.
\end{equation}
So we obtain a whole sequence of (scale-dependent) \emph{benchmark models} $(\model_j)_{j \geq 0}$ given by
\begin{equation}
\model_j(x) := \model \conv \Corofilter_j(x) = \distrinp{\model}{\Corofilter_j(x-\dotarg)}, \quad x \in \R^2.
\end{equation}
Note that---although the definition of universal shearlets is involved here---there is no dependence on the scaling sequence $(\aj)_j$.
Hence, the models are comparable with different choices of $\aj$.
Using relation \eqref{eq:shearlets:corotiling}, the original image model can be recovered from $(\model_j)_{j \geq 0}$ by
\begin{equation}\label{eq:model:decomposerecover}
	\model = \Scalfunc \conv (\Scalfunc \conv \model) + \sum_{j \geq 0} \Corofilter_j \conv \model_j,
\end{equation}
where the first term of the sum corresponds to the low-frequency part of $\model$.

A computation of the Fourier transforms,
\begin{equation} \label{eq:model:fourierfiltered}
	\ft\model_j(\xi) = \ft\model(\xi) \cdot \ft\Corofilter_j(\xi) = \ft\weight(\xi_1) \cdot \Corofunc_j(\xi), \quad \xi \in \R^2,
\end{equation}
shows that the $\model_j$ are band-limited Schwartz functions and have therefore finite energy, that is $\model_j \in \Leb[\R^2]{2}$.
Furthermore, \eqref{eq:model:fourierfiltered} allows an important conclusion on the filter decomposition: Due to the multiplication with $\Corofunc_j$, the frequency support of $\model$ gets truncated to the corona $\Coro_j$ (see Figure~\ref{fig:shearlets:coro:scalinglevels}).
Recalling the geometry of $\Coro_j$, this implies that $\model_j$ covers increasingly higher frequency parts of $\model$ (as $j$ grows).
But at the same time, the energy of $\model_j$ strongly concentrates around the horizontal axis:
\begin{equation}
\abs{\model_j(x_1, x_2)} \leq C_N \Schwartzpoly{2^{2j}x_2}^{-N}, \quad \text{for every $N \in \N$,}
\end{equation}
see also computation \eqref{eq:proofs:spatialdecay:modeldecay} for more details.
Roughly spoken, we approximate a sharp edge singularity when sending $j$ to infinity.
This also coincides with our intuition that paintings with high frequency content are often governed by curvilinear structures.
Hence, the following inpainting analysis will mainly focus on the asymptotic case $j \to \infty$.

Contemplating, at the first sight, the above frequency decomposition appears to be a rather artificial and technical approach to
represent an edge model in $\Leb[\R^2]{2}$, but this is actually a crucial step toward a theoretical result on inpainting: If we would
analyze only one single image model, it would be essentially unclear how to evaluate the quality of the recovery.
On the contrary, an asymptotic analysis of a whole sequence of models---more precisely, we consider the inpainting behavior for $(\model_j)_{j \geq 0}$ as $j \to \infty$---allows us to make a statement about convergence, independently from any concrete error quantities.

\section{A Convergence Theorem for Image Inpainting}
\label{sec:imginp}

In the previous two sections, we have specified the ingredients of the image inpainting problem formulated by Algorithm~\ref{algo:framework:l1min} in case of $\InpSp = \Leb[\R^2]{2}$.
Now, we will apply the abstract error estimate of Theorem~\ref{thm:framework:errorl1min} to show the following convergence theorem, which is the main result of this paper and formally proves the success of image inpainting via universal shearlets. Note that in particular the inpainting results from \cite{king2014analysis} for shearlets and wavelets included as special cases.

\begin{theorem}
\label{thm:imginp:asympconv}
Let $(\aj)_{j}$ be a scaling sequence and let $\Unishshort := \Unish$ be an associated universal shearlet system.
We choose a gap width $\mdiam_j > 0$ for every $j \geq 0$ and consider the models $\ProK \model_j = \indset{\setcompl{\mask{\mdiam_j}}}\model_j$
of the corrupted image.
The respective recoveries provided by Algorithm~\ref{algo:framework:l1min}, using $\Unishshort$, are denoted by $\modelrec_j$.
Furthermore, we assume that the following two conditions are satisfied:
\begin{properties}[Inp]
\item\label{item:imginp:asympconv:aj}
	There exist $\delta > 0$ and $j_0 \in \N$ such that $\aj > \delta$ for all $j \geq j_0$, i.e., $\liminf_{j \to \infty} \aj > 0$.
\item\label{item:imginp:asympconv:gaps}
	For a fixed $\eps > 0$, the sequence of gap widths satisfies $(\mdiam_j)_j \in \asympffaster{2^{-(\aj+\eps) j}}$.
\end{properties}
Then, the (relative) recovery error decays \emph{rapidly} in the $\l{1}$-analysis norm, i.e.,
\begin{equation}\label{eq:imginp:asympconv}
	\left( \frac{\anorm{\modelrec_j - \model_j}{\Unishshort}}{\anorm{\model_j}{\Unishshort}} \right)_{\mathclap{j}} \in \asympffaster{2^{-Nj}}, \quad \text{as $j \to \infty$}
\end{equation}
for every $N \in \Nzero$.
(Note that the asymptotic constants in \eqref{eq:imginp:asympconv} may depend on $N$.)
\end{theorem}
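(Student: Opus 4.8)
The plan is to reduce the statement to the abstract recovery bound of Theorem~\ref{thm:framework:errorl1min}. Fix $N \in \Nzero$. For each sufficiently large $j$ I would construct a finite index set $\cluster_j \subset \Unishgroup$ (the index set of $\Unishshort$) such that (i) $\model_j$ is $\delta_j$-clustered sparse in $\Unishshort$ with respect to $\cluster_j$, with $\delta_j$ smaller than any prescribed negative power of $2^j$, and (ii) $\clustercoh{\cluster_j}{\ProM\Unishshort} \to 0$ as $j \to \infty$ (so in particular eventually $< 1/2$). Since $\model_j$ is a band-limited Schwartz function its analysis coefficients decay rapidly, hence $\model_j \in \InpSp_{1,\Unishshort}$ and Theorem~\ref{thm:framework:errorl1min} yields $\anorm{\modelrec_j - \model_j}{\Unishshort} \leq 2\delta_j/(1 - 2\clustercoh{\cluster_j}{\ProM\Unishshort})$ for $j$ large. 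Dividing by $\anorm{\model_j}{\Unishshort}$, which stays bounded away from $0$ because $\anorm{\model_j}{\Unishshort} \geq \norm{\model_j}$ and $\norm{\model_j}^2 = \integ[\R^2]{\abs{\ft\weight(\xi_1)}^2 \Corofunc_j^2(\xi)}{d\xi}$ grows like $2^{2j}$, then gives \eqref{eq:imginp:asympconv}.

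For the cluster I would take the vertical-cone shearlets $\unish[\ver]{\alpha_{j'}}{j'}{l}{k}$ of scales $j' \in \{j-1, j, j+1\}$ --- the two neighbouring scales have to be included since the coronas $\Coro_{j'}$ overlap, so that $\ft\model_j = \ft\weight(\xi_1)\Corofunc_j$ also meets their Fourier supports --- restricted to bounded shearing $\abs{l} \leq L$ and translations $k = (k_1, k_2)$ in a box $\abs{k_1} \leq B_j$, $\abs{k_2} \leq C_j$; here $L$ is an absolute constant, $B_j \in \asympeq{2^{\aj j}}$ is taken large enough that the box covers the segment $\intvcl{-\wlen}{\wlen} \times \{0\}$ (on whose scale these elements have $x_1$-extent $\asympeq{2^{-\aj j}}$), and $C_j$ grows slowly, to be fixed in terms of $N$. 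The motivation is that $\ft\model_j$ is supported in $\Coro_j$ and, $\ft\weight$ being rapidly decaying, concentrated near the $\xi_2$-axis, while $\model_j$ is spatially concentrated near the segment.

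The clustered-sparsity estimate is then a matter of Schwartz tail bounds: a coefficient $\sp{\model_j}{\unishplain_\gamma}$ with $\gamma \notin \cluster_j$ is either exactly zero (scales with $\abs{j'-j} \geq 2$, where the coronas are disjoint); or indexed by a horizontal-cone element, which is supported where $\abs{\xi_1}$ is of order $2^{2j}$, so $\ft\weight(\xi_1)$ there is smaller than any power of $2^{-j}$ while the number of such elements grows only polynomially in $2^j$; or by a vertical-cone element with $\abs{l} > L$, supported where $\abs{\xi_1} \gtrsim 2^{\aj j}$, which by condition~\ref{item:imginp:asympconv:aj} is at least $2^{\delta j}$ --- that condition is essential here, since it forces this frequency range to grow exponentially; or by a vertical-cone element with $\abs{l} \leq L$ but $k$ outside the box, hence spatially far (by a margin growing exponentially in $j$, respectively by $2^{-2j}C_j$ in the $x_2$-direction) from $\supp\weight \times \{0\}$, where $\model_j$ decays rapidly. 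Summing these contributions and choosing the Schwartz-decay exponents, $L$ and $C_j$ large enough as functions of $N$ makes $\delta_j$ smaller than any prescribed negative power of $2^j$, which together with the lower bound on $\anorm{\model_j}{\Unishshort}$ above gives (i).

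I expect the cluster-coherence estimate to be the main obstacle. The geometric point is that $\mask{\mdiam_j}$ is a vertical strip of width $2\mdiam_j$, whereas a cluster element has $x_1$-extent $\asympeq{2^{-\alpha_{j'}j'}}$; condition~\ref{item:imginp:asympconv:gaps} gives $\mdiam_j 2^{\aj j} \in \asympffaster{2^{-\eps j}} \to 0$, so the strip is much thinner than the extent of the relevant elements, scale by scale. A change of variables adapted to the anisotropic scaling $\pscalcone{\alpha_{j'}}{\ver}^{j'}$ together with the rapid decay of the generating profile should give, for every $M$, a bound $\norm{\ProM \unish[\ver]{\alpha_{j'}}{j'}{l}{k}} \leq c_M (\mdiam_j 2^{\aj j})^{1/2}(1 + \abs{k_1})^{-M}$ for $\gamma = (j',\ver,l,k) \in \cluster_j$, uniformly in $j$ (the boundedness of the $\alpha_{j'}$ implied by condition~\ref{item:imginp:asympconv:aj} is used again here). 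Combined with Cauchy--Schwarz, the uniform estimate $\sup_{\gamma'\in\Unishgroup} \norm{\ProM\unishplain_{\gamma'}} \leq \sup_{\gamma'}\norm{\unishplain_{\gamma'}} < \infty$, and the convergence of $\sum_{k_1\in\Z}(1+\abs{k_1})^{-M}$ (making the $k_1$-sum over the box $\asympfaster{1}$ and leaving only a factor $\asympfaster{C_j}$ from the remaining $j'$-, $l$- and $k_2$-sums), one arrives at $\clustercoh{\cluster_j}{\ProM\Unishshort} \leq c\,(\mdiam_j 2^{\aj j})^{1/2}\,C_j$, which tends to $0$ provided $C_j$ was chosen to grow slowly enough relative to $\eps$, establishing (ii). The delicate points --- which is also why one needs uniformity in $j$, hence $\liminf_j \aj > 0$ --- are the precise dependence of $\norm{\ProM\unishplain_\gamma}$ on the interplay of the gap width $\mdiam_j$, the anisotropic scale $2^{-\alpha_{j'}j'}$ and the shearing $l$, kept uniform across the three scales $j-1, j, j+1$, and the balancing of this single-element bound against the cardinality of $\cluster_j$ so that the product still vanishes in the limit.
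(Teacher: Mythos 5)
Your overall architecture is exactly the paper's: reduce to Theorem~\ref{thm:framework:errorl1min} by exhibiting clusters spread over the three scales $j-1, j, j+1$ (needed because the coronas overlap) for which the sparsity defect $\delta_j$ decays rapidly and the cluster coherence tends to zero, then divide by $\anorm{\model_j}{\Unishshort} \geq \norm{\model_j} \gtrsim 2^{j}$. Your case analysis for sparsity (horizontal-cone elements killed by the decay of $\ft\weight$ at $\abs{\xi_1} \sim 2^{2j}$; shears $\abs{l} > L$ killed at $\abs{\xi_1} \gtrsim 2^{\aj j} \geq 2^{\delta j}$ via \ref{item:imginp:asympconv:aj}; the rest by spatial separation) mirrors Lemmas~\ref{lem:imginp:coeffdecay} and \ref{lem:imginp:spatialdecay}, and your coherence estimate via $\norm{\ProM \unishplain_{\Unishind}}$ and Cauchy--Schwarz is a legitimate variant of the paper's direct inner-product computation; it costs a square root in $2^{\aj j}\mdiam_j$, which still vanishes.

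There is, however, a genuine gap in the cluster design. You take translations in a box $\abs{k_1} \leq B_j$, $\abs{k_2} \leq C_j$ and argue that a vertical-cone element with $\abs{l} \leq L$ and $k$ outside the box is spatially far from $\supp\weight \times \{0\}$. This is false for $l \neq 0$: the element $\unish[\ver]{\aj}{j}{l}{k}$ is concentrated around $\pscalcone{\aj}{\ver}^{-j}\shearcone{\ver}^{-l}k = (2^{-\aj j}k_1,\, 2^{-2j}(k_2 - lk_1))$, so its distance to the $x_1$-axis is governed by $k_2 - lk_1$, not by $k_2$. Take $l = 1$ and $k_2 = k_1$ with $2^{-\aj j}\abs{k_1} \leq \wlen$: this element sits directly on the singularity and its coefficient is generically of order $2^{(5-\aj)j/2}$ (cf.\ the heuristic in Subsection~\ref{subsec:imginp:optimality}); nothing in your argument makes it small. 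Yet $\abs{k_2} = \abs{k_1}$ can be as large as $\sim 2^{\aj j} \gg C_j$, so roughly $2^{\aj j}$ such indices are excluded from your cluster, and their total contribution already makes your $\delta_j$ blow up rather than decay. The paper's cluster uses the sheared tube $\abs{k_2 - lk_1} \leq 2^{\eps j}$ precisely to respect this shear--translation interplay. Relatedly, the tube half-width cannot merely ``grow slowly'': the tail of the spatial sum over $\abs{k_2 - lk_1} > C_j$ is of order $2^{(5-\aj)j/2}\Schwartzpoly{C_j}^{-(N-1)}$, so to beat every $2^{-Nj}$ against the exponential prefactor you must take $C_j \sim 2^{\eps' j}$ for some $\eps' > 0$ (and then $\eps' < \eps/2$ so that your bound $C_j (2^{\aj j}\mdiam_j)^{1/2}$ on the coherence still tends to zero); any subexponential choice of $C_j$ fails.
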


Theorem~\ref{thm:imginp:asympconv} will be a direct consequence of the Propositions~\ref{prop:imginp:clusteredsparse} and \ref{prop:imginp:clustercoh}---their proof will be the major challenge of this section---applied to Theorem~\ref{thm:framework:errorl1min}.
Note that the normalization in \eqref{eq:imginp:asympconv} has only a minor influence on the actually statement, as Corollary~\ref{cor:imginp:analysisnorm} will show.

Before giving a complete proof, we wish to point out an important perspective of how to read this result.
As already discussed at the end of the previous section, Theorem~\ref{thm:imginp:asympconv} provides an asymptotical statement about the inpainting
performance for $\ProK \model_j$ when the scaling level becomes sufficiently high.
In fact, each model is reconstructed separately, and also the corresponding gap width $\mdiam_j$ depends on the scale $j$; this presumption is essential because the Fourier support of $\model_j$ is scale-depended.

Under the assumption of ``adequate control of gap size'' in terms of \ref{item:imginp:asympconv:gaps}, we indeed obtain rapid error decay.
As supposed in \cite{king2014analysis}, we call the underlying recovery process \emph{asymptotically perfect} whenever \eqref{eq:imginp:asympconv} holds.
In the sense of a theoretical analysis, an asymptotically perfect recovery of a model sequence should be rather seen as a \emph{benchmark criterion} for the success of inpainting.
The actual question that one should ask is therefore:
\begin{center}
\emph{Under which (sufficient and/or necessary) conditions do we obtain \eqref{eq:imginp:asympconv}?}
\end{center}

Form this point of view, the most interesting aspects of Theorem~\ref{thm:imginp:asympconv} are the (sufficient) conditions \ref{item:imginp:asympconv:aj} and \ref{item:imginp:asympconv:gaps}.
The first one excludes the artificial case of negative scaling: When the elements $\aj$ of the scaling sequence are all non-positive, the essential length of the shearlet element becomes constant or even blows up to infinity.
In fact, this restriction is quite natural because otherwise, we could fill in gaps of arbitrary size for a finitely supported model.

The second condition forms the ``heart'' of Theorem~\ref{thm:imginp:asympconv}.
Intuitively spoken, \ref{item:imginp:asympconv:gaps} restricts the allowed gap widths so that they can be covered by one single shearlet element (see also Figure~\ref{fig:imginp:gapcover}).
In this way, Theorem~\ref{thm:imginp:asympconv} relates the degree of anisotropic scaling to the admissible gap sizes: When $\aj$ becomes smaller, the asymptotical condition $(\mdiam_j)_j \in \asympffaster{2^{-(\aj+\eps) j}}$ is satisfied for larger $\mdiam_j$.
For the two sample systems from Subsection~\ref{subsec:shearlets:alphashearlet} and \ref{subsec:shearlets:wavelet}, we particularly conclude that, in terms of image inpainting, ($\alpha$-)shearlet frames are superior to isotropic wavelet systems; more general, increasing the degree of anisotropy (by making $\alpha$ smaller) ``continuously'' improves the above convergence result.

To summarize, the essence of our formal inpainting analysis is a \emph{comparison} of several representation frames that vary in their degree of directional scaling.
However, the theorem gives only sufficient conditions for asymptotically perfect inpainting; a rigorous proof that, for instance, shearlets are superior to wavelets
would also require a necessary condition. This (still open) problem is discussed in Subsection~\ref{subsec:imginp:optimality}.

\begin{figure}
	\centering
	\includegraphics[width=0.25\textwidth]{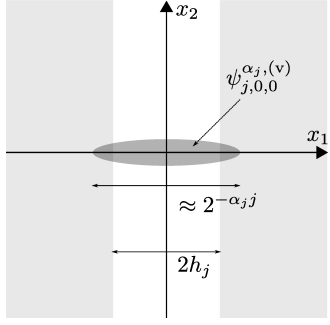}
	\caption{Visualization of condition \ref{item:imginp:asympconv:gaps}.
	The gap width $\mdiam_j$ shall not exceed the essential length of the horizontally oriented shearlet elements (on scale $j$).}
	\label{fig:imginp:gapcover}
\end{figure}

\subsection{Clustered Sparsity}
\label{subsec:imginp:clusteredsparse}

At first, we shall investigate the Fourier support properties of the modeling functions $\model_j$.
For the sake of convenience, we introduce some indexing sets for universal shearlet frames:
\begin{align}
	\Unishgroup &:= \begin{aligned}[t]
		& \{ (j, l, k; \aj, \dir) \suchthat j \geq 0, \abs{l} < 2^{(2-\aj)j}, k \in \Z^2, \dir \in \{\hor, \ver \} \} \\
		& \union \{ (j, l, k; \aj, \emptyset) \suchthat j \geq 0, \abs{l} = 2^{(2-\aj)j}, k \in \Z^2 \},
	\end{aligned} \\
	\Unishgroup_j &:= \{ (j', l, k; \aj, \dir) \in \Unishgroup \suchthat j' = j \}, \quad j \geq 0.
\end{align}
Using the notation $\unishplain_\Unishind := \unish[\dir]{\alpha}{j}{l}{k}$ for $\Unishind = (j, l, k; \aj, \dir) \in \Unishgroup$ (here, $\dir = \emptyset$ means that this index is omitted), we can write in short
\begin{equation}
	\bigunion_{\Unishind \in \Unishgroup} \unishplain_\Unishind = \UnishInt \union \UnishBound.
\end{equation}

By definition, the support of $\ft\model_j$ is contained in the frequency corona $\Coro_j$.
Due to \eqref{eq:shearlets:corosupp}, this region also overlaps with the neighboring coronas $\Coro_{j-1}$ and $\Coro_{j+1}$, but is disjoint with all the other ones.
Hence, for any $j \geq 1$, we easily conclude that
\begin{align}\label{eq:imginp:scalpm}
	\sp{\model_j}{\unishplain_{-1,k}} &= 0, \quad k \in \Z^2,\\
	\sp{\model_j}{\unish[\dir]{\alpha}{j'}{l}{k}} &= 0, \quad (j', l, k; \aj, \dir) \in \Unishgroup_{j'}, \ \abs{j' - j} > 1. 
\end{align}
This shows that all non-zero coefficients are actually contained in the subsequence $(\sp{\model_j}{\unishplain_\Unishind})_{\Unishind \in \scalpm{\Unishgroup_j} }$, where we used a comfortable notation from \cite{donoho2013microlocal}:
\begin{equation}
	\scalpm{A_j} := A_{j-1} \union A_{j} \union A_{j+1}, \quad j \geq 0, \quad \text{$(A_j)_{j \in \Nzero}$ are arbitrary sets and $A_{-1} := \emptyset$}.
\end{equation}
At this point, it also should become more clear why we have chosen the functions $\Corofilter_j = \ift\Corofunc_j$ as frequency filters in Section~\ref{sec:model}.

Now, we fix some $\eps > 0$ (this will be the same as in \ref{item:imginp:asympconv:gaps}) and define the clusters
\begin{equation}
	\cluster_j := \{ (j, l, k; \aj, \ver) \suchthat \abs{l} \leq 1, k \in \Z^2, \abs{k_2 - l k_1} \leq 2^{\eps j} \}\subset \Unishgroup_j, \quad j \geq 0.
\end{equation}
Figure~\ref{fig:imginp:modelcluster} visualizes the geometric shape of $\cluster_j$: We consider only those shearlet elements which are almost horizontally oriented and concentrated in a tube of width $2^{\eps j}$ around the $x_1$-axis (recall that ``$\ver$'' means a vertical direction in frequency which corresponds to a horizontal orientation in the space).
This choice is clearly influenced by the geometry of $\model$ whose support ($\supp \model = \intvcl{-\wlen}{\wlen} \times \{0\}$) is entirely covered by every $\cluster_j$.
But note that, although the clusters are carefully chosen, there may occur very small shearlet coefficients belonging to $\cluster_j$ as well; this is due the band-limiting property of $\Unish$ which produces strong oscillations in the spatial domain.
The proofs of the main Propositions~\ref{prop:imginp:clusteredsparse} and \ref{prop:imginp:clustercoh} will show that the term $2^{\eps j}$ plays a fundamental role in our analysis.
In fact, $\eps$ controls the trade-off relation between clustered sparsity and cluster coherence that was already mentioned in the course of Theorem~\ref{thm:framework:errorl1min}.
\begin{figure}
	\centering
	\includegraphics[width=0.3\textwidth]{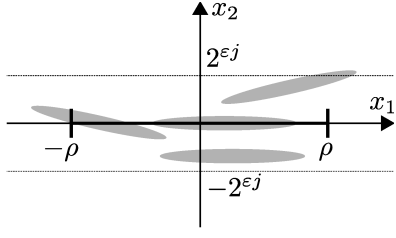}
	\caption{Some shearlet elements associated with the cluster $\cluster_j$.}
	\label{fig:imginp:modelcluster}
\end{figure}

Taking the technicality of ``neighboring coronas'' into account, we finally put
\begin{equation}
	\delta_j^{j'-j}
	:= \lnorm{\indcoeff{\Unishgroup_{j'} \setminus \cluster_{j'}} ( \OpAnalysis{\Unishshort} \model_j )}[1]
	= \sum_{\Unishind \in \Unishgroup_{j'} \setminus \cluster_{j'}} \abs{\sp{\model_j}{\unishplain_\Unishind}}, \quad \text{for } j, j' \geq 1, \ \abs{j' - j} \leq 1,
\end{equation}
and then sum up (recall that $\Unishshort = \Unish$),
\begin{equation}
	\delta_j
	:= \delta_j^{-1} + \delta_j^{0} + \delta_j^{+1}
	\stackrel{\eqref{eq:imginp:scalpm}}{=} \lnorm{\indcoeff{\setcompl{(\scalpm{\cluster}_j)}} ( \OpAnalysis{\Unishshort} \model_j )}[1] = \sum_{\Unishind \in \setcompl{(\scalpm{\cluster}_j)}} \abs{\sp{\model_j}{\unishplain_\Unishind}}, \quad \text{for } j \geq 1.
\end{equation}
By definition, $\model_j$ is $\delta_j$-clustered sparse in $\Unish$ with respect to the cluster $\scalpm{\cluster}_j$.
In the following, we will show that $\scalpm{\cluster}_j$ indeed captures the sparsity behavior:
\begin{proposition}
\label{prop:imginp:clusteredsparse}
Assuming that $\liminf_{j \to \infty} \aj > 0$, the sequence $(\delta_j)_{j \in \N}$ decays rapidly, i.e.,
\begin{equation}\label{eq:imginp:clusteredsparse}
	\delta_j \in \asympffaster{2^{-N j}}, \quad j \to \infty,
\end{equation}
for every $N \in \N$.
\end{proposition}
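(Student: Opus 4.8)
The plan is to estimate the tail sum $\delta_j = \sum_{\Unishind \in \setcompl{(\scalpm{\cluster}_j)}} \abs{\sp{\model_j}{\unishplain_\Unishind}}$ by passing to the Fourier side and exploiting the explicit product structure of $\ft\model_j(\xi) = \ft\weight(\xi_1)\Corofunc_j(\xi)$ together with the rapid (Schwartz) decay of the shearlet generators. By Plancherel, $\sp{\model_j}{\unishplain_\Unishind} = \sp{\ft\model_j}{\unishft_\Unishind} = \integ[\R^2]{\ft\weight(\xi_1)\Corofunc_j(\xi)\conj{\unishft_\Unishind(\xi)}}{d\xi}$, and since both $\ft\model_j$ and $\unishft_\Unishind$ are supported in (a neighbourhood of) the corona $\Coro_j$, the relevant frequencies all satisfy $\abs{\xi_1} \asymp 2^{2j}$ on the horizontal cone. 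First I would split $\setcompl{(\scalpm{\cluster}_j)} \cap \scalpm{\Unishgroup_j}$ (only these indices contribute, by \eqref{eq:imginp:scalpm}) into three regimes: (a) shearlets on scales $j' \in \{j-1,j,j+1\}$ in the \emph{horizontal} cone, (b) \emph{vertical}-cone shearlets with shear $\abs{l} \geq 2$, and (c) vertical-cone shearlets with $\abs{l}\leq 1$ but translation index $k$ outside the tube $\abs{k_2 - lk_1} \leq 2^{\eps j}$. Regime (c) is the genuinely important one; (a) and (b) should be controlled by directional mismatch alone.

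For regimes (a) and (b), the key point is geometric: the Fourier support of $\model_j$ concentrates sharply around the $\xi_1$-axis — more precisely, $\ft\weight \in \Schwartz[\R]$ so $\abs{\ft\weight(\xi_1)} \leq C_M \Schwartzpoly[M]{\xi_1}$ for all $M$ — while a horizontal-cone shearlet $\unishft[\hor]{\aj}{j'}{l}{k}$ (resp. a vertical-cone shearlet with $\abs{l}\geq 2$) has its frequency plate wedged away from the $\xi_1$-axis, living where $\abs{\xi_1} \gtrsim 2^{2j}$ with additional angular separation. Hence on the overlap of supports $\abs{\xi_1}$ is forced to be large, and each integrand is bounded by $C_M 2^{-2jM}$ times a shearlet $L^1$-mass that is itself uniformly bounded (the generators are Schwartz). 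Summing the $\asympfaster{2^{(2-\aj)j}}$ shears, the two space translates $k$ — here one uses that each $\unishft_\Unishind$ is, in the spatial domain, a Schwartz function rapidly decaying away from the shear-adapted lattice point $\pscalcone{\aj}{\dir}^{-j}\shearcone{\dir}^{-l} k$, so the $k$-sum converges geometrically — and the three neighbouring scales, one still gets a bound of the form $C_{M'} 2^{-M'j}$ for every $M'$, which is $\asympffaster{2^{-Nj}}$ for every $N$. Regime (b) is the same estimate with the roles of $\xi_1,\xi_2$ interchanged, the extra factor $\abs{l}\geq 2$ providing the angular gap.

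The main obstacle is regime (c): these shearlets are \emph{correctly oriented} and on the \emph{right} scale, so decay can only come from the spatial separation between the shearlet's essential support and the line $\intvcl{-\wlen}{\wlen}\times\{0\}$ on which $\model$ lives. The strategy here is to work in the spatial domain: write $\sp{\model_j}{\unishplain_\Unishind} = \distrinp{\model}{\conj{\unishplain_\Unishind} \conv \conj{\Corofilter_j}}$ — or, cleaner, $\sp{\model_j}{\unishplain_\Unishind} = \sp{\model}{\unishplain_\Unishind \conv \conj{\Corofilter_j}}$ viewed as pairing a distribution against a Schwartz function — so that $\abs{\sp{\model_j}{\unishplain_\Unishind}} \leq \integ[-\wlen][\wlen]{\abs{\weight(t)}\,\abs{(\unishplain_\Unishind \conv \Corofilter_j)(t,0)}}{dt}$. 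For a vertical-cone shearlet with $\abs{l}\leq 1$, the generator $\unishplain_\Unishind$ is a Schwartz function essentially concentrated, in the space, on a ridge of length $\asymp 2^{\aj j}$ and width $\asymp 2^{2j}$ — wait, one must be careful with which axis is long — centred at the lattice point $p_\Unishind := \pscalcone{\aj}{\ver}^{-j}\shearcone{\ver}^{-l} k$, whose relevant coordinate (transverse to the ridge) is $\asymp 2^{-2j}(k_2 - l k_1)$. Since $\abs{k_2 - lk_1} > 2^{\eps j}$ puts this point at transverse distance $\gtrsim 2^{(\eps-2)j}\cdot 2^{2j}= 2^{\eps j}$ ridge-widths away from the $x_1$-axis — more precisely, the anisotropic dilation turns $\abs{k_2-lk_1}>2^{\eps j}$ into a gain of $\Schwartzpoly[N]{2^{\eps j}}$ in the rapidly-decreasing bound — each such coefficient is $\leq C_N \norm[auto]{\weight}_1 \, 2^{-\eps N j}$ (after absorbing the Jacobian normalization $2^{-(2+\aj)j/2}$ and the convolution with $\Corofilter_j$, which only costs finitely many fixed powers of $2^{2j}$). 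Summing over the at most $\asympfaster{2^{\aj j}}$ relevant translates $k$ along the ridge — here $\liminf \aj > 0$ is irrelevant, but it is used precisely to ensure $2^{\aj j} \to \infty$ does not outpace $2^{-\eps Nj}$, so we absorb it by taking $N$ large — and over $\abs{l}\leq 1$ and the three neighbouring scales, one obtains $\delta_j^{(c)} \in \asympffaster{2^{-Nj}}$ for every $N$. Combining the three regimes gives $\delta_j = \delta_j^{(a)} + \delta_j^{(b)} + \delta_j^{(c)} \in \asympffaster{2^{-Nj}}$ for every $N$, which is \eqref{eq:imginp:clusteredsparse}. The technically delicate bookkeeping — tracking the anisotropic Jacobians $2^{-(2\pm\aj)j/2}$, the sheared lattice spacing, and the extra $\Corofilter_j$-convolution, and making the $k$-sums genuinely convergent uniformly in $j$ — is where the real work lies; the geometric heuristics above are straightforward.
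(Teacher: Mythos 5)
Your overall architecture coincides with the paper's: misoriented shearlets (horizontal cone, and vertical cone with large shear) are handled on the Fourier side via the rapid decay of $\ft\weight$ away from $\xi_1=0$, while the correctly oriented vertical-cone shearlets outside the tube $\abs{k_2-lk_1}\leq 2^{\eps j}$ are handled by a purely spatial separation argument. This is exactly the paper's split into $T_2^0,T_3^0,T_4^0$ (treated by Lemma~\ref{lem:imginp:coeffdecay}, i.e.\ Fourier-side partial integration against the support geometry) versus $T_1^0$ (treated by Lemma~\ref{lem:imginp:spatialdecay}); your regime (c) is essentially the paper's spatial step and is sound modulo bookkeeping, including the $\Schwartzpoly{k_2-lk_1}^{-N}$ gain summed over $\abs{k_2-lk_1}>2^{\eps j}$.

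There is, however, a genuine error in your regime (b), and it sits precisely where the hypothesis $\liminf_{j}\aj>0$ does its work. For a vertical-cone shearlet with shear $\abs{l}\geq 2$, the Fourier support satisfies $\abs{\xi_2}\asymp 2^{2j}$ and $\xi_1/\xi_2\in\intvcl{(l-1)2^{(\aj-2)j}}{(l+1)2^{(\aj-2)j}}$, hence only $\abs{\xi_1}\asymp\abs{l}\,2^{\aj j}$ --- \emph{not} $\abs{\xi_1}\gtrsim 2^{2j}$ as you claim. Since $\ft\model_j(\xi)=\ft\weight(\xi_1)\Corofunc_j(\xi)$ decays only in the $\xi_1$ variable (it has no decay in $\xi_2$), the gain from $\ft\weight$ is merely $\Schwartzpoly{2^{\aj j}}^{-M}$, and this single factor must beat the $2^{O(Nj)}$ terms produced by the sums over $k$ and over the $\asympfaster{2^{(2-\aj)j}}$ shears. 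If $\aj\to 0$ (which scaling sequences permit), then $2^{\aj j}$ can stay bounded and this factor does not decay at all, so the estimate collapses; this is exactly why $\liminf_j\aj>0$ is indispensable in regime (b) (and in the analogous estimates for the horizontal and boundary terms the paper only notes are handled similarly). Your proposal instead asserts a $2^{-2jM}$ gain for regime (b) ``with the roles of $\xi_1,\xi_2$ interchanged'' and relegates $\liminf_j\aj>0$ to a translate count in regime (c), where in fact it is not needed: the spatial estimate for $T_1^0$ closes with the prefactor $2^{(5-\aj)j/2}$ against $2^{-(N-1)\eps j}$ regardless. To repair the argument you must correct the support geometry in regime (b) and relocate the hypothesis accordingly; as a minor point, the boundary shearlets should also be addressed explicitly, though they follow by combining the horizontal and vertical estimates.
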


The proof of this proposition is essentially based on appropriate estimates of
\begin{align}
	\delta_j^{j'-j}
	= \sum_{\Unishind \in \Unishgroup_{j'} \setminus \cluster_{j'}} \abs{\sp{\model_j}{\unishplain_\Unishind}}
	={} &  \underbrace{\sum_{\substack{k \in \Z^2,\abs{l}\leq 1\\ \abs{k_2 - lk_1}>2^{\eps j'}}} \abs[\Big]{\sp[\Big]{\model_j}{\unish[\ver]{\alpha_{j'}}{j'}{l}{k}}}}_{=: T_1^{j'-j}} + \underbrace{\sum_{\substack{k \in \Z^2,\\ \abs{l}>1}} \abs[\Big]{\sp[\Big]{\model_j}{\unish[\ver]{\alpha_{j'}}{j'}{l}{k}}}}_{=: T_2^{j'-j}} \\
	&{} + \underbrace{\sum_{k \in \Z^2,l} \abs[\Big]{\sp[\Big]{\model_j}{\unish[\hor]{\alpha_{j'}}{j'}{l}{k}}}}_{=: T_3^{j'-j}} +  \underbrace{\sum_{k \in \Z^2} \abs[\Big]{\sp[\Big]{\model_j}{\unish{\alpha_{j'}}{j'}{\pm2^{(2-\alpha_{j'})j'}}{k}}}}_{=: T_4^{j'-j}}.
\end{align}
In the following, we will just investigate the case of $j' = j$.
The respective arguments for $j' = j \pm 1$ are literally the same; only the corresponding constants may differ.
To keep the notation simple, we introduce a shortcut for the transformed translations,
\begin{align}
	\translind{t}{\hor} &= (\translind{t_1}{\hor}, \translind{t_2}{\hor}) := \pscalcone{\aj}{\hor}^{-j} \shearcone{\hor}^{-l} k = (2^{-2j}(k_1 - l k_2), 2^{-\aj j} k_2), \\
	\translind{t}{\ver} &= (\translind{t_1}{\ver}, \translind{t_2}{\ver}) := \pscalcone{\aj}{\ver}^{-j} \shearcone{\ver}^{-l} k = (2^{-\aj j} k_1, 2^{-2j}(k_2 - l k_1)),
	\end{align}
where $j \in \Z$, $l \in \Z$ and $k \in \Z^2$.
Moreover, we put (the benefit of this notation will become clear with Lemma~\ref{lem:imginp:coeffdecay})
\begin{equation}
	\translind{N}{\dir} := \translind{N}{\dir}(\translind{t}{\dir}) := \begin{cases}
		N, & t^{(\dir)} \neq 0, \\
		0, & t^{(\dir)} = 0,
	\end{cases} \quad \dir \in \{ \hor, \ver \}, \quad N \in \Nzero.
\end{equation}
The following two lemmas yield several decay estimates for the shearlet coefficients.
A detailed proof as well as some remarks on the used techniques can be found in Section~\ref{subsec:proofs:coeffdecay}.\footnote{The reader should be aware of the fact that many estimates in the proofs make heavy use of \emph{generic constants}, i.e., although a constant may change from step to step, the same symbol, usually a ``$C$'', is used. Dependencies on other constants will be indicated by sub-indices. This is often referred to the \emph{Hardy-notation} in literature.} For what follows, also recall the compact notation of ``Schwartz polynomials'' $\Schwartzpoly{x} = (1 + \abs{x}^2)^{1/2}$.
\begin{lemma}
\label{lem:imginp:coeffdecay}
Let $(j, l, k; \aj, \dir) \in \Unishgroup_j$ with $j \geq 1$.
If $\aj \geq 0$, then the following estimates hold for arbitrary integers $N_1, N_2, M \geq 0$:
\begin{romanlist}
\item \label{lem:imginp:coeffdecay:ver}
	If $\dir = \ver$ and $\abs{l} > 1$, we have
	\begin{equation}
		\abs[\Big]{\sp[\Big]{\model_j}{\unish[\ver]{\aj}{j}{l}{k}}} \leq C_{N_1,N_2,M} \abs[auto]{\translind{t_1}{\ver}}^{-\translind{N_1}{\ver}} \abs[auto]{\translind{t_2}{\ver}}^{-\translind{N_2}{\ver}} 2^{(2-\aj)j/2} 2^{-\translind{N_2}{\ver}\aj j} \Schwartzpoly{2^{\aj j}}^{-M}.
	\end{equation}
\item \label{lem:imginp:coeffdecay:hor}
	If $\dir = \hor$, we have
	\begin{equation}
		\abs[\Big]{\sp[\Big]{\model_j}{\unish[\hor]{\aj}{j}{l}{k}}} \leq C_{N_1,N_2,M} \abs[auto]{\translind{t_1}{\hor}}^{-\translind{N_1}{\hor}} \abs[auto]{\translind{t_2}{\hor}}^{-\translind{N_2}{\hor}} 2^{(2-\aj)j/2} 2^{-\translind{N_2}{\hor}\aj j} \Schwartzpoly{2^{2 j}}^{-M}.
	\end{equation}
\item \label{lem:imginp:coeffdecay:boundary}
	If $\dir = \emptyset$ and $\abs{l} = 2^{(2-\aj)j}$, we have
	\begin{equation}
		\abs[\Big]{\sp[\Big]{\model_j}{\unish{\aj}{j}{l}{k}}} \leq C_{N_1,N_2,M} \abs[auto]{\translind{t_1}{\hor}}^{-\translind{N_1}{\hor}} \abs[auto]{\translind{t_2}{\hor}}^{-\translind{N_2}{\hor}} 2^{(2-\aj)j/2} 2^{-\translind{N_2}{\hor}\aj j} \Schwartzpoly{2^{2 j}}^{-M}.
	\end{equation}
\end{romanlist}
\end{lemma}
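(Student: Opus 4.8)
The plan is to pass to the frequency side via Plancherel and recognise each coefficient $\sp{\model_j}{\unishplain_\Unishind}$ as an \emph{oscillatory integral} whose phase is linear in $\xi$ and whose gradient is exactly the transformed translation $\translind{t}{\dir}$; the claimed polynomial decay in $\translind{t}{\dir}$ then comes from repeated integration by parts, while the decay in the scale $j$ (the factors $2^{(2-\aj)j/2}$, $2^{-\translind{N_2}{\dir}\aj j}$ and $\Schwartzpoly{2^{\aj j}}^{-M}$ resp. $\Schwartzpoly{2^{2j}}^{-M}$) comes from the fixed Schwartz window $\ft\weight$ together with the geometry of the Fourier supports of the shearlet elements.

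Concretely, combining $\ft\model_j(\xi) = \ft\weight(\xi_1)\Corofunc(2^{-2j}\xi)$ with the Fourier formula for interior shearlets in Definition~\ref{def:shearlets:unishdef}, for $\Unishind = (j,l,k;\aj,\ver)\in\Unishgroup_j$ one obtains
\begin{equation}
	\sp{\model_j}{\unish[\ver]{\aj}{j}{l}{k}} = 2^{-(2+\aj)j/2}\integ[\R^2]{g_\Unishind(\xi)\, e^{2\pi i\, \xi^\T \pscalcone{\aj}{\ver}^{-j}\shearcone{\ver}^{-l} k}}{d\xi},
\end{equation}
where $g_\Unishind(\xi) := \ft\weight(\xi_1)\,\Corofunc(2^{-2j}\xi)^2\,\conj{\Conefunc{\ver}\opleft(\xi^\T \pscalcone{\aj}{\ver}^{-j}\shearcone{\ver}^{-l}\opright)}$ is smooth, compactly supported, and inherits the rapid decay of $\ft\weight\in\Schwartz[\R]$ in the $\xi_1$-variable; the $\hor$-case and the two pieces of the boundary case are completely analogous. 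Since $\xi\mapsto\xi^\T \pscalcone{\aj}{\ver}^{-j}\shearcone{\ver}^{-l} k$ is linear with $\partial_{\xi_i}\bigl(\xi^\T \pscalcone{\aj}{\ver}^{-j}\shearcone{\ver}^{-l} k\bigr) = \bigl(\pscalcone{\aj}{\ver}^{-j}\shearcone{\ver}^{-l} k\bigr)_i = \translind{t_i}{\ver}$, integrating by parts in $\xi_i$ (legitimate since $g_\Unishind$ is compactly supported, so there are no boundary terms) produces a factor $(2\pi\translind{t_i}{\ver})^{-1}$ and moves a derivative $\partial_{\xi_i}$ onto $g_\Unishind$. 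Carrying this out $N_1$ times in $\xi_1$ whenever $\translind{t_1}{\ver}\neq 0$ and $N_2$ times in $\xi_2$ whenever $\translind{t_2}{\ver}\neq 0$, and otherwise not integrating by parts in that variable (which is precisely what the convention $\translind{N}{\dir}$ records), yields
\begin{equation}
	\abs[auto]{\sp[auto]{\model_j}{\unish[\ver]{\aj}{j}{l}{k}}} \leq C\, 2^{-(2+\aj)j/2}\, \abs[auto]{\translind{t_1}{\ver}}^{-\translind{N_1}{\ver}}\abs[auto]{\translind{t_2}{\ver}}^{-\translind{N_2}{\ver}}\integ[\supp g_\Unishind]{\abs[auto]{\partial_{\xi_1}^{\translind{N_1}{\ver}}\partial_{\xi_2}^{\translind{N_2}{\ver}}g_\Unishind(\xi)}}{d\xi}.
\end{equation}

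It remains to estimate this last integral, and this is where the scale-dependent factors enter. The integrand is supported on the intersection of the dilated corona ($\abs{\xi}\asymp 2^{2j}$) with the cone-function region, a trapezoid of $\xi_2$-extent $\asymp 2^{2j}$ and $\xi_1$-extent $\asymp 2^{\aj j}$ (the roles of the two variables being interchanged in the $\hor$ and boundary pieces); in particular $\abs{\xi_2}\asymp 2^{2j}$ stays bounded away from $0$, so the argument of $\conefunc$ is bounded and its singularity is irrelevant. The decisive point is that $\ft\weight$ is a \emph{fixed} Schwartz function, not rescaled with $j$: hence $\integ[\R]{\abs[auto]{\partial_{\xi_1}^{a}\ft\weight(\xi_1)}}{d\xi_1}=\asympfaster{1}$ uniformly, which replaces the crude volume bound $2^{(2+\aj)j}$ by a factor $2^{2j}$ coming from the $\xi_2$-integration alone, turning the prefactor $2^{-(2+\aj)j/2}$ into the asserted $2^{(2-\aj)j/2}$. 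Moreover, whenever the trapezoid is pushed away from the $\xi_1$-axis --- which is exactly the case $\abs{l}>1$ in \ref{lem:imginp:coeffdecay:ver} and is \emph{always} the case in \ref{lem:imginp:coeffdecay:hor} and \ref{lem:imginp:coeffdecay:boundary} --- the variable $\xi_1$ has modulus $\gtrsim 2^{\aj j}$ (resp.\ $\gtrsim 2^{2j}$) there, so the Schwartz tail estimate $\integ[\abs{\xi_1}\geq R]{\abs[auto]{\partial_{\xi_1}^{a}\ft\weight(\xi_1)}}{d\xi_1}\leq C_M R^{-M}$ supplies the factor $\Schwartzpoly{2^{\aj j}}^{-M}$ (resp.\ $\Schwartzpoly{2^{2j}}^{-M}$) for every $M$. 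Finally, a chain-rule computation shows that each $\partial_{\xi_2}$ hitting $\Corofunc(2^{-2j}\xi)^2$ costs at most $2^{-2j}\leq 2^{-\aj j}$, and each $\partial_{\xi_2}$ hitting $\conj{\Conefunc{\ver}(\xi^\T\pscalcone{\aj}{\ver}^{-j}\shearcone{\ver}^{-l})}$ costs at most $\asympfaster{2^{-\aj j}}$ on the support (any stray power of the shearing parameter $l$ being absorbed into the already-extracted $\abs[auto]{\translind{t_2}{\ver}}^{-\translind{N_2}{\ver}}$), which yields the announced $2^{-\translind{N_2}{\ver}\aj j}$. Assembling these estimates together with a Leibniz expansion of $\partial_{\xi_1}^{\translind{N_1}{\ver}}\partial_{\xi_2}^{\translind{N_2}{\ver}}g_\Unishind$ gives \ref{lem:imginp:coeffdecay:ver}, and \ref{lem:imginp:coeffdecay:hor}, \ref{lem:imginp:coeffdecay:boundary} follow by the same argument with the two frequency variables interchanged (for the boundary shearlets one applies it separately on each cone).

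I expect the main obstacle to be the careful, uniform bookkeeping in this last step: one has to control, for \emph{arbitrary} orders $N_1,N_2,M$ and \emph{uniformly} over all admissible $(l,k)$, how the repeated differentiation of the composed cone function $\conefunc\opleft(\xi^\T\pscalcone{\aj}{\ver}^{-j}\shearcone{\ver}^{-l}\opright)$ and of the dilated corona factor interacts with the anisotropic scaling --- in particular verifying that no $\xi_2$-derivative ever produces a factor worse than $2^{-\aj j}$ and that every power of the (possibly very large) shearing parameter $l$ is compensated by the explicit $\abs[auto]{\translind{t_2}{\ver}}^{-\translind{N_2}{\ver}}$ gain --- and simultaneously reconciling the mismatch between the geometric volume $2^{(2+\aj)j}$ of the Fourier support and the target prefactor by fully exploiting the scale-independent Schwartz decay of $\ft\weight$. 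The remaining Leibniz combinatorics, the handling of the generic constants $C_{N_1,N_2,M}$, and the essentially identical neighbouring cases $j' = j\pm 1$ are routine.
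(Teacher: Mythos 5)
Your argument is correct and follows essentially the same route as the paper's proof: Plancherel, integration by parts against the linear phase whose gradient is $\translind{t}{\dir}$, a Leibniz expansion, the observation that the $\xi_1$-support lies at distance $\gtrsim 2^{\aj j}$ (resp.\ $2^{2j}$) from the origin so that the fixed Schwartz window $\ft\weight$ supplies $\Schwartzpoly{2^{\aj j}}^{-M}$ (resp.\ $\Schwartzpoly{2^{2j}}^{-M}$), the cost $2^{-\aj j}$ per derivative of $\sigma_{j,l}$, and the $\xi_2$-volume $\asymp 2^{2j}$ converting $2^{-(2+\aj)j/2}$ into $2^{(2-\aj)j/2}$. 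Only a cosmetic remark: the relevant geometric fact is that the trapezoid is pushed away from the $\xi_2$-axis (i.e.\ $\abs{\xi_1}$ is large), not the $\xi_1$-axis, and the uniform control of the shearing parameter in the $\xi_2$-derivatives comes from $\abs{l}\leq 2^{(2-\aj)j}$ rather than from absorption into $\abs[auto]{\translind{t_2}{\ver}}^{-\translind{N_2}{\ver}}$, exactly as in the paper's estimate of $\partial\sigma_{j,l}/\partial\xi_2$.
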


\begin{lemma}
\label{lem:imginp:spatialdecay}
Let $(j, l, k; \aj, \ver) \in \Unishgroup_j$ with $j \geq 1$ and $\abs{l}\leq 1$.
Then, for every integer $N \geq 2$, we have
\begin{equation} \label{eq:imginp:spatialdecay}
\abs[\Big]{\sp[\Big]{\model_j}{\unish[\ver]{\aj}{j}{l}{k}}} \leq C_N 2^{(5-\aj)j/2} \integ[\R]{\tilde\weight_{N,j}(2^{-\aj j}(x_1+k_1)) \Schwartzpoly{x_1}^{-N} \Schwartzpoly{l x_1 + lk_1 - k_2)}^{-N}}{dx_1},
\end{equation}
where $\tilde\weight_{N,j} := \abs{\weight} \conv \Schwartzpoly{2^{2j} \dotarg}^{-N}$.
\end{lemma}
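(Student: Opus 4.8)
The plan is to carry everything to the physical domain and to bound $\bigl|\sp{\model_j}{\unish[\ver]{\aj}{j}{l}{k}}\bigr|$ by the integral over $\R^2$ of the product of the pointwise envelopes of $\model_j$ and of $\unish[\ver]{\aj}{j}{l}{k}$. The envelope of $\model_j$ is read off from $\ft\model_j(\xi)=\ft\weight(\xi_1)\Corofunc_j(\xi)$: this identity says that $\model_j$ is the convolution of $\weight$ with $\Corofilter_j:=\ift\Corofunc_j$ in the \emph{first} variable only, and since $\Corofunc\in\Schwartz[\R^2]$ the usual dilation gives $\Corofilter_j(x)=2^{4j}\ift\Corofunc(2^{2j}x)$, hence $\abs{\Corofilter_j(x_1,x_2)}\le C_N\,2^{4j}\,\Schwartzpoly{2^{2j}x_1}^{-N}\Schwartzpoly{2^{2j}x_2}^{-N}$ for every $N$. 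Convolving the $x_1$-profile against $\abs{\weight}$ therefore yields
\[
\abs{\model_j(x_1,x_2)}\le C_N\,2^{4j}\,\Schwartzpoly{2^{2j}x_2}^{-N}\,\tilde\weight_{N,j}(x_1),\qquad \tilde\weight_{N,j}=\abs{\weight}\conv\Schwartzpoly{2^{2j}\dotarg}^{-N},
\]
which is exactly where the smoothed weight $\tilde\weight_{N,j}$ of the statement originates.

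For the shearlet I would first observe that the translation by $k$ in \eqref{eq:shearlets:unishdef:interior} is an honest spatial translation, $\unish[\ver]{\aj}{j}{l}{k}(x)=\unish[\ver]{\aj}{j}{l}{0}(x-\translind{t}{\ver})$ with $\translind{t}{\ver}=\pscalcone{\aj}{\ver}^{-j}\shearcone{\ver}^{-l}k$ as introduced before the lemma, so that only $\unish[\ver]{\aj}{j}{l}{0}$ must be controlled. Writing out its Fourier integral and substituting $\xi=2^{2j}u$ displays $\unish[\ver]{\aj}{j}{l}{0}$ as a scale-dependent power of $2$ times $g_{j,l}(2^{2j}\dotarg)$, where $g_{j,l}(y)=\integ[\R^2]{\Corofunc(u)\,\conefunc\bigl(2^{(2-\aj)j}\tfrac{u_1}{u_2}-l\bigr)\,e^{2\pi i u^\T y}}{du}$. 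The amplitude is smooth and supported in a corona intersected with a cone of opening $\asympeq{2^{-(2-\aj)j}}$ about the $u_2$-axis; this is where $\abs{l}\le1$ enters, making the cone essentially horizontal, keeping it away from $u_2=0$ (so the amplitude really is smooth), and forcing its support to have measure $\asympeq{2^{-(2-\aj)j}}$. Integrating by parts $N$ times in $u_1$ produces the decay $\Schwartzpoly{2^{-(2-\aj)j}y_1}^{-N}$, because each $u_1$-derivative of the cone factor costs a factor $\asympfaster{2^{(2-\aj)j}}$ that is exactly offset by the gain $\abs{y_1}^{-1}$; passing to the variables $(\tau,u_2)$ with $\tau:=2^{(2-\aj)j}u_1/u_2-l$ and integrating by parts $N$ times in $u_2$ produces the shear-adapted decay $\Schwartzpoly{l\,2^{-(2-\aj)j}y_1+y_2}^{-N}$, the point being that $\abs{u_1}\lesssim 2^{-(2-\aj)j}$ on the support makes the $u_2$-derivatives of the cone factor only $\asympfaster{1}$. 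Undoing the $2^{2j}$-dilation and the $\translind{t}{\ver}$-translation and simplifying the arguments then yields an envelope of the form
\[
\bigl|\unish[\ver]{\aj}{j}{l}{k}(x_1,x_2)\bigr|\le C_N\cdot(\text{power of }2)\cdot\Schwartzpoly{2^{\aj j}x_1-k_1}^{-N}\,\Schwartzpoly{l\,2^{\aj j}x_1+2^{2j}x_2-k_2}^{-N}.
\]
(This is the quantitative refinement of the Schwartz property behind Theorem~\ref{thm:shearlets:unishprop}, and can alternatively be extracted from the estimates in Subsection~\ref{subsec:proofs:unishprop}.)

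It then remains to multiply the two envelopes and integrate. Integrating out $x_2$ by means of the elementary one-dimensional bound $\integ[\R]{\Schwartzpoly{2^{2j}x_2}^{-N}\Schwartzpoly{2^{2j}x_2+a}^{-N}}{dx_2}\le C_N\,2^{-2j}\Schwartzpoly{a}^{-N}$ (valid for $N\ge2$, which is why that hypothesis is imposed), applied with $a=l\,2^{\aj j}x_1-k_2$, leaves a one-dimensional integral in $x_1$ which is put into the asserted shape by the change of variables $x_1\mapsto 2^{-\aj j}(x_1+k_1)$: this sends the argument of $\tilde\weight_{N,j}$ to $2^{-\aj j}(x_1+k_1)$, sends $\Schwartzpoly{2^{\aj j}\dotarg-k_1}^{-N}$ to $\Schwartzpoly{x_1}^{-N}$, and sends $\Schwartzpoly{l\,2^{\aj j}\dotarg-k_2}^{-N}$ to $\Schwartzpoly{lx_1+lk_1-k_2}^{-N}$. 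Collecting the powers of $2$ contributed by the two envelopes, by the $x_2$-integration, and by the change of variables then produces the prefactor recorded in the statement.

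The substantive step is the second one: the anisotropic, shear-adapted pointwise bound for the shearlet with the correct \emph{uniform} dependence on $j$, $l$, $k$ and $\aj$. The delicate part is the bookkeeping of the powers $2^{(2-\aj)j}$ generated by differentiating the cone function, played off against the decay harvested from the oscillatory phase; in particular, one must check that differentiation in the ``long'' direction $u_2$ costs only $\asympfaster{1}$ on the support of the cone, which is precisely what keeps the estimate uniform all the way to the isotropic end $\aj\to2$. Everything else --- the convolution envelope for $\model_j$, the one-dimensional convolution bound, and the closing change of variables --- is routine.
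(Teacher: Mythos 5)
Your argument is correct and follows essentially the same route as the paper's proof: a pointwise envelope for $\model_j$ obtained from the convolution structure $\model_j=\model\conv\Corofilter_j$, the anisotropic shear-adapted envelope $\abs{\unish[\ver]{\aj}{j}{l}{k}(x)}\lesssim 2^{(\aj+2)j/2}\Schwartzpoly{2^{\aj j}x_1-k_1}^{-N}\Schwartzpoly{l2^{\aj j}x_1+2^{2j}x_2-k_2}^{-N}$, integration in $x_2$ via the peak-splitting bound $\integ[\R]{\Schwartzpoly{u}^{-N}\Schwartzpoly{u+a}^{-N}}{du}\leq C_N\Schwartzpoly{a}^{-N}$ (which is exactly the paper's max/min argument and where $N\geq 2$ enters), and the closing change of variables $x_1\mapsto 2^{-\aj j}(x_1+k_1)$. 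The only difference is cosmetic: you sketch a stationary-phase derivation of the shearlet envelope that the paper simply asserts from the Schwartz property of the construction.
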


Now, we are able to give a proof of the main result of this section:
\begin{proof}[Proof of Proposition~\ref{prop:imginp:clusteredsparse}]
\begin{proofsteps}{Step}
\item\label{prop:imginp:clusteredsparse:fourier}
	We start with an estimation of $T_2^0$.
	For this, we fix $j \geq 0$ and $\abs{l} > 1$ first, and consider some index $(j, l, k; \aj, \ver) \in \Unishgroup_j$.
	Then, for $N \geq 2$, we have
	\begin{align}
		&\sum_{\substack{k \in \Z^2,\\ \mathclap{\translind{t_1}{\ver}\neq0,\translind{t_2}{\ver}\neq0}}} \abs[auto]{\translind{t_1}{\ver}}^{-N} \abs[auto]{\translind{t_2}{\ver}}^{-N}
		= \sum_{\substack{k \in \Z^2,\\ \mathclap{k_1 \neq 0,k_2 \neq l k_1}}} \abs{2^{-\aj j}k_1}^{-N} \abs{2^{-2 j}(k_2 - l k_1)}^{-N} \\
		={} & 2^{(\aj N + 2 N) j}\sum_{\substack{k \in \Z^2, \\ \mathclap{k_1\neq0, k_2 \neq 0}}} \abs{k_1}^{-N} \abs{k_2}^{-N} \leq C_{N} 2^{N(\aj + 2) j} \integ[\abs{x_1} \geq 1,\abs{x_2}\geq 1]{\abs{x_1}^{-N} \abs{x_2}^{-N}}{dx} \leq C_{N} 2^{N(\aj + 2) j}.
	\end{align}
	Similar estimates for the cases $\translind{t_1}{\ver} = 0$ and/or $\translind{t_2}{\ver} = 0$ yield altogether
	\begin{equation}
		\sum_{k \in \Z^2} \abs[auto]{\translind{t_1}{\ver}}^{-\translind{N}{\ver}} \abs[auto]{\translind{t_2}{\ver}}^{-\translind{N}{\ver}} \leq C_{N} 2^{\translind{N}{\ver}(\aj + 2 ) j}.
	\end{equation}
	Finally, by Lemma~\ref{lem:imginp:coeffdecay}\ref{lem:imginp:coeffdecay:ver},
	\begin{align}
		T_2^0
		&= \sum_{\substack{k \in \Z^2,\\ \mathclap{1 < \abs{l} \leq 2^{(2-\aj)j}}}} \abs[\Big]{\sp[\Big]{\model_j}{\unish[\ver]{\aj}{j}{l}{k}}} \leq C_{N,M} \sum_{\substack{k \in \Z^2,\\ \mathclap{1 < \abs{l} \leq 2^{(2-\aj)j}}}} \abs[auto]{\translind{t_1}{\ver}}^{-\translind{N}{\ver}} \abs[auto]{\translind{t_2}{\ver}}^{-\translind{N}{\ver}} 2^{(2-\aj)j/2} 2^{-\translind{N}{\ver}\aj j} \Schwartzpoly{2^{\aj j}}^{-M} \\
		&\leq C_{N,M} 2^{3(2-\aj)j/2} 2^{-\translind{N}{\ver}\aj j} \Schwartzpoly{2^{\aj j}}^{-M} \sum_{k \in \Z^2} \abs[auto]{\translind{t_1}{\ver}}^{-\translind{N}{\ver}} \abs[auto]{\translind{t_2}{\ver}}^{-\translind{N}{\ver}} \\
		&\leq C_{N,M} 2^{3(2-\aj)j/2} 2^{2 \translind{N}{\ver} j} \Schwartzpoly{2^{\aj j}}^{-M}.
	\end{align}
	Using the assumption $\liminf_{j \to \infty} \aj > 0$ and choosing $M$ sufficiently large, we obtain the desired decay rates of \eqref{eq:imginp:clusteredsparse}.
	The estimates for $T_3^0$ and $T_4^0$ are done analogously and are therefore omitted at this point.
\item\label{prop:imginp:clusteredsparse:spatial}
	To show the respective decay rate for $T_1^0$, we are going to apply Lemma~\ref{lem:imginp:spatialdecay} for $N \geq 2$ to obtain
	\begin{align}
		T_1^0 &= \sum_{\substack{k \in \Z^2,\abs{l}\leq 1\\ \abs{k_2 - lk_1}>2^{\eps j}}} \abs[\Big]{\sp[\Big]{\model_j}{\unish[\ver]{\aj}{j}{l}{k}}} \\
		&\leq  C_N 2^{(5-\aj)j/2} \sum_{\substack{k \in \Z^2,\abs{l}\leq 1\\
		\abs{k_2 - lk_1}>2^{\eps j}}} \integ[\R]{\tilde\weight_{N,j}(2^{-\aj j}(x_1+k_1)) \Schwartzpoly{x_1}^{-N} \Schwartzpoly{l x_1 + lk_1 - k_2}^{-N}}{dx_1}\\
		&=  C_N 2^{(5-\aj)j/2} \sum_{\substack{k \in \Z^2,\abs{l}\leq 1\\
		\abs{k_2}>2^{\eps j}}} \integ[\R]{\tilde\weight_{N,j}(2^{-\aj j}(x_1+k_1)) \Schwartzpoly{x_1}^{-N} \Schwartzpoly{l x_1 - k_2}^{-N}}{dx_1}\\
		&=  C_N 2^{(5-\aj)j/2} \sum_{\substack{\abs{l}\leq 1 \\ k_2 \in \Z, \abs{k_2}>2^{\eps j}}} \integ[\R]{\Bigg( \sum_{k_1 \in \Z} \tilde\weight_{N,j}(2^{-\aj j}(x_1+k_1)) \Bigg) \Schwartzpoly{x_1}^{-N} \Schwartzpoly{l x_1 - k_2}^{-N}}{dx_1}.
	\end{align}
	The first factor of the integrand can be estimated by
	\begin{align}
		\sum_{k_1 \in \Z} \tilde\weight_{N,j}(2^{-\aj j}(x_1+k_1)) &= \sum_{k_1 \in \Z} \integ[\R]{\abs{\weight(y_1)} \Schwartzpoly{2^{2j}(y_1-(2^{-\aj j}(x_1+k_1)))}^{-N}}{dy_1} \\
		&=\sum_{k_1 \in \Z} \integ[\R]{\abs{\weight(y_1)} \Schwartzpoly{2^{(2-\aj)j}(k_1 + x_1 - 2^{\aj j}y_1)}^{-N}}{dy_1} \\
		&\leq  \integ[\R]{\abs{\weight(y_1)} \underbrace{\Bigg(\sum_{k_1 \in \Z} \Schwartzpoly{k_1 + x_1 - 2^{\aj j}y_1}^{-N}\Bigg)}_{\leq C_N}}{dy_1} \leq C_N.
	\end{align}
	For the last inequality recall that $2^{(2-\aj)j} \geq 1$. Finally, by a similar argumentation as in \eqref{eq:proofs:spatialdecay:splitargument}, we obtain
	\begin{align}
		T_1^0 &\leq C_N 2^{(5-\aj)j/2} \sum_{\substack{\abs{l}\leq 1 \\ \mathclap{ k_2 \in \Z, \abs{k_2}>2^{\eps j}}}} \integ[\R]{\Schwartzpoly{x_1}^{-N}
        \Schwartzpoly{l x_1 - k_2)}^{-N}}{dx_1} \\
		&\leq C_N 2^{(5-\aj)j/2} \sum_{k_2 \in \Z, \abs{k_2}>2^{\eps j}} \Schwartzpoly{k_2}^{-N} \leq C_N 2^{(5-\aj)j/2} \integ[\abs{x_2} > 2^{\eps j}]{\Schwartzpoly{x_2}^{-N}}{dx_2} \\
		&\leq C_N 2^{(5-\aj)j/2} 2^{-(N-1)\eps j}.
	\end{align}
	This proves the claim since we can choose $N$ arbitrarily large. \qedhere
\end{proofsteps}
\end{proof}

The necessity of the assumption $\liminf_j \aj > 0$ in Proposition~\ref{prop:imginp:clusteredsparse} becomes clear within \ref{prop:imginp:clusteredsparse:fourier} of the proof; the factor $\Schwartzpoly{2^{\aj j}}^{-M}$ would not produce the required decay otherwise.
Moreover, the calculations of \ref{prop:imginp:clusteredsparse:spatial} emphasize the importance of the condition $\abs{k_2 - l k_1} \leq 2^{\eps j}$ in the definition of $\cluster_j$: To obtain a sufficient coefficient decay, we require a certain distance between the model singularity and the essential shearlet support.
However, one should be aware of the fact that---although rapid decay is achieved for arbitrary small $\eps > 0$---the constants in \eqref{eq:imginp:clusteredsparse} will blow up as $\eps$ approaches $0$.

The following corollary, clarifies the behavior of the normalization factor in \eqref{eq:imginp:asympconv}:
\begin{corollary}\label{cor:imginp:analysisnorm}
There exists an $N \geq 0$ such that $\anorm{\model_j}{\Unishshort} \in \asympffaster{2^{Nj}}$ as $j \to \infty$.
\end{corollary}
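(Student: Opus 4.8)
The plan is to reuse the work already done for Proposition~\ref{prop:imginp:clusteredsparse}: the $\l{1}$-analysis norm of $\model_j$ splits into the ``off-cluster'' mass, which is exactly $\delta_j$ and hence rapidly decaying, plus the ``cluster'' mass, which we only need to bound \emph{from above} by a quantity that is exponential in $j$. Throughout, as in Proposition~\ref{prop:imginp:clusteredsparse}, we work under the assumption $\liminf_{j \to \infty} \aj > 0$.

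First I would observe that, for $j \geq 1$, relation \eqref{eq:imginp:scalpm} kills all coefficients of $\model_j$ against the coarse scaling functions, and moreover $\sp{\model_j}{\unishplain_\Unishind} = 0$ whenever $\Unishind \notin \scalpm{\Unishgroup_j}$. Consequently, by the very definitions of $\delta_j$ and of the clusters,
\begin{equation}
	\anorm{\model_j}{\Unishshort} = \sum_{\Unishind \in \scalpm{\Unishgroup_j}} \abs{\sp{\model_j}{\unishplain_\Unishind}} = \delta_j + \sum_{\Unishind \in \scalpm{\cluster_j}} \abs{\sp{\model_j}{\unishplain_\Unishind}} .
\end{equation}
Since $\delta_j \in \asympffaster{2^{-j}}$ by Proposition~\ref{prop:imginp:clusteredsparse}, it remains to estimate the cluster term.

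For the cluster term I would repeat the computation carried out in \ref{prop:imginp:clusteredsparse:spatial} of the proof of Proposition~\ref{prop:imginp:clusteredsparse}, with the single modification that the tube condition is reversed. By the usual symmetry it suffices to bound $\sum_{\Unishind \in \cluster_j} \abs{\sp{\model_j}{\unishplain_\Unishind}}$ (the scales $j \pm 1$ are handled verbatim, possibly with different constants). Each element of $\cluster_j$ is of the form $\unish[\ver]{\aj}{j}{l}{k}$ with $\abs{l} \leq 1$, so Lemma~\ref{lem:imginp:spatialdecay} applies to every coefficient; performing the sum over $k_1$ by means of $\sum_{k_1 \in \Z} \tilde\weight_{N,j}(2^{-\aj j}(x_1 + k_1)) \leq C_N$ (established there, using $2^{(2-\aj)j} \geq 1$) and then over $m := k_2 - l k_1$, the sole difference is that $m$ now runs over $\abs{m} \leq 2^{\eps j}$, so that $\sum_{\abs{m} \leq 2^{\eps j}} \Schwartzpoly{m}^{-N} \leq \sum_{m \in \Z} \Schwartzpoly{m}^{-N} = C_N < \infty$ for $N \geq 2$ --- a \emph{bounded} quantity in place of the decaying tail $\asympffaster{2^{-(N-1)\eps j}}$ that appeared in the original argument. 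This produces $\sum_{\Unishind \in \cluster_j} \abs{\sp{\model_j}{\unishplain_\Unishind}} \leq C_N\, 2^{(5-\aj)j/2}$, and therefore $\sum_{\Unishind \in \scalpm{\cluster_j}} \abs{\sp{\model_j}{\unishplain_\Unishind}} \leq C \bigl( 2^{(5-\alpha_{j-1})(j-1)/2} + 2^{(5-\aj)j/2} + 2^{(5-\alpha_{j+1})(j+1)/2} \bigr)$.

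It remains to read off the rate. By $\liminf_{j\to\infty}\aj > 0$ there are $\delta_0 > 0$ and $j_0$ with $\aj \geq \delta_0$ for $j \geq j_0$; since also $\aj < 2$ for every $j$, this gives $2^{(5-\aj)j/2} \leq 2^{5j/2}$ for $j \geq j_0$, and the two neighbouring-scale terms are $\leq C\, 2^{5(j+1)/2}$. Combining with the display above, $\anorm{\model_j}{\Unishshort} \leq \delta_j + C\, 2^{5j/2}$ for all large $j$, hence $\anorm{\model_j}{\Unishshort} \in \asympffaster{2^{3j}}$ and $N = 3$ does the job. The only point that genuinely needs care is that the cluster contribution, unlike $\delta_j$, does \emph{not} vanish: it may actually grow like $2^{(5-\aj)j/2}$, and the content of the corollary is precisely that this growth is still merely exponential, so that the normalization in \eqref{eq:imginp:asympconv} is harmless. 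No estimate beyond Lemma~\ref{lem:imginp:spatialdecay} and Proposition~\ref{prop:imginp:clusteredsparse} is needed, which is why the statement is phrased as a corollary.
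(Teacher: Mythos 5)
Your proposal is correct and follows exactly the route the paper sketches: split $\anorm{\model_j}{\Unishshort}$ into the off-cluster part $\delta_j$ (rapidly decaying by Proposition~\ref{prop:imginp:clusteredsparse}) and the in-cluster part, then bound the latter by rerunning Step~2 of that proof via Lemma~\ref{lem:imginp:spatialdecay} with the tube condition reversed, so the tail sum over $m = k_2 - lk_1$ becomes merely bounded and the whole term is $O(2^{(5-\aj)j/2})$. The details you supply (the change of variables in $k_2$, the uniform bound on the $k_1$-sum, and the final choice $N=3$) are all sound, so this is essentially the paper's own argument written out in full.
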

\begin{proof}
Due to Proposition~\ref{prop:imginp:clusteredsparse} is sufficient to investigate what happens ``inside'' of $\scalpm{\cluster}_j$.
For this, one can make use of Lemma~\ref{lem:imginp:spatialdecay} again, and proceed similar to \ref{prop:imginp:clusteredsparse:spatial} in the proof of Proposition~\ref{prop:imginp:clusteredsparse}.\qedhere
\end{proof}

\subsection{Cluster Coherence}

We now analyze the cluster coherence.
At this point, the condition on the gap size \ref{item:imginp:asympconv:gaps}, which has not occurred yet, comes into play.
Conversely, the image models $\model_j$ are only implicitly involved, namely by the geometric shape of the clusters $\scalpm{\cluster_j}$.

\begin{proposition}
\label{prop:imginp:clustercoh}
Assuming that $(\mdiam_j)_j \in \asympffaster{2^{-(\aj + \eps) j}}$, we have (recall that $\Unishshort = \Unish$)
\begin{equation}
	\clustercoh{\scalpm{\cluster_j}}{\indset{\mask{\mdiam_j}} \Unishshort} \to 0, \quad j \to \infty.
\end{equation}
\end{proposition}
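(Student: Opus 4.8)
The plan is to exploit that, for the mask $\ProM:=\indset{\mask{\mdiam_j}}$ of the thin vertical strip, every element indexed by $\scalpm{\cluster_j}$ is a \emph{nearly-horizontal} shearlet whose spatial extent in the $x_1$-direction is $\sim 2^{-\aj j}$ (resp.\ $\sim 2^{-\alpha_{j\pm1}(j\pm1)}$), which by \ref{item:imginp:asympconv:gaps} is eventually far larger than the half-width $\mdiam_j$ of the strip; hence its restriction to the strip keeps only a vanishing fraction of its energy, and that smallness, amplified by Cauchy--Schwarz over the $\sim 2^{\eps j}$ translates allowed inside the tube defining $\cluster_j$, is exactly what \ref{item:imginp:asympconv:gaps} is tailored to beat. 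First I would reduce: since $\ProM$ is self-adjoint and idempotent, $\sp{\ProM\unishplain_\Unishind}{\ProM\unishplain_{\tilde\Unishind}}=\sp{\unishplain_\Unishind}{\ProM\unishplain_{\tilde\Unishind}}$, so
\[
	\clustercoh{\scalpm{\cluster_j}}{\ProM\Unishshort}=\max_{\tilde\Unishind}\,\sum_{\Unishind\in\scalpm{\cluster_j}}\abs{\sp{\unishplain_\Unishind}{g_{\tilde\Unishind}}},\qquad g_{\tilde\Unishind}:=\ProM\unishplain_{\tilde\Unishind},
\]
where $\tilde\Unishind$ runs over all indices of $\Unishshort$ and each $g_{\tilde\Unishind}$ is supported in $\mask{\mdiam_j}$ with $\norm{g_{\tilde\Unishind}}\leq\norm{\unishplain_{\tilde\Unishind}}\leq1$ (norms of Parseval-frame elements are at most $1$). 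Splitting $\scalpm{\cluster_j}=\cluster_{j-1}\union\cluster_j\union\cluster_{j+1}$, it then suffices to bound $\sum_{\Unishind\in\cluster_{j'}}\abs{\sp{\unishplain_\Unishind}{g}}$ for $j'\in\{j-1,j,j+1\}$, uniformly over all $g\in\Leb[\R^2]{2}$ with $\norm{g}\leq1$ and $\supp g\subset\mask{\mdiam_j}$; I will carry out the representative case $j'=j$.

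The key point I would establish is a \emph{small} Bessel bound for the cluster elements restricted to the strip. Fix $\Unishind=(j,l,k;\aj,\ver)\in\cluster_j$. The band-limited Schwartz function $\unishplain_\Unishind$ is concentrated, at the anisotropic scale $\operatorname{diag}(2^{\aj j},2^{2j})$, around $(2^{-\aj j}k_1,\,2^{-2j}(k_2-lk_1))$ (modulo the mild shear, as $\abs{l}\leq1$), with rapid decay away from it. From this one reads off, for every $N$, $\norm{\ProM\unishplain_\Unishind}^2\leq C_N(2^{\aj j}\mdiam_j)\Schwartzpoly{k_1}^{-N}$, the factor $2^{\aj j}\mdiam_j$ being the ratio of the strip width to the $x_1$-extent of the element, and $\Schwartzpoly{k_1}^{-N}$ recording that the $x_1$-centre $2^{-\aj j}k_1$ lies $\sim\abs{k_1}$ element-widths off the strip. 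More importantly, for fixed $l,k_1$ the members of $(\ProM\unishplain_{(j,l,(k_1,k_2);\aj,\ver)})_{k_2\in\Z}$ are mutually almost orthogonal --- they are $x_2$-translates of a single profile at spacing $2^{-2j}$, and $\ProM$, acting only in $x_1$, commutes with these translations --- so they form a Bessel system with bound $C_N(2^{\aj j}\mdiam_j)\Schwartzpoly{k_1}^{-N}$ (derived, e.g., from the periodization formula for translates, or from the discrete Calder\'on identity \eqref{eq:shearlets:corotiling} for the corona window $\Corofunc$). As $\supp g\subset\mask{\mdiam_j}$ gives $\sp{\unishplain_\Unishind}{g}=\sp{\ProM\unishplain_\Unishind}{g}$, this yields $\sum_{k_2\in\Z}\abs{\sp{\unishplain_\Unishind}{g}}^2\leq C_N(2^{\aj j}\mdiam_j)\Schwartzpoly{k_1}^{-N}\norm{g}^2$.

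Summation is then routine. Membership in $\cluster_j$ imposes $\abs{l}\leq1$ and $\abs{k_2-lk_1}\leq2^{\eps j}$, i.e.\ at most $\lesssim 2^{\eps j}$ choices of $k_2$ per pair $(l,k_1)$; Cauchy--Schwarz over these together with the preceding $\l{2}$-bound gives $\sum_{\abs{k_2-lk_1}\leq2^{\eps j}}\abs{\sp{\unishplain_\Unishind}{g}}\leq C_N(2^{\eps j}\,2^{\aj j}\mdiam_j)^{1/2}\Schwartzpoly{k_1}^{-N/2}\norm{g}$, and summing over $k_1\in\Z$ (convergent for $N\geq3$) and $l\in\{-1,0,1\}$ produces $\sum_{\Unishind\in\cluster_j}\abs{\sp{\unishplain_\Unishind}{g}}\leq C\,(2^{(\aj+\eps)j}\mdiam_j)^{1/2}$, uniformly in $g$ and hence in $\tilde\Unishind$. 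By \ref{item:imginp:asympconv:gaps}, $2^{(\aj+\eps)j}\mdiam_j=\mdiam_j/2^{-(\aj+\eps)j}\to0$, so this contribution vanishes. The scales $j\pm1$ I would treat by the same computation with $(\aj,j)$ replaced by $(\alpha_{j\pm1},j\pm1)$; the one extra ingredient needed there is that $\sp{\unishplain_\Unishind}{\ProM\unishplain_{\tilde\Unishind}}=0$ unless the $\xi_2$-projections of $\supp\ft{\unishplain_\Unishind}$ and $\supp\ft{\unishplain_{\tilde\Unishind}}$ intersect --- multiplying by $\indset{\mask{\mdiam_j}}$ acts only in $x_1$ and leaves the $\xi_2$-support of $\unishplain_{\tilde\Unishind}$ unchanged --- which confines the relevant $\tilde\Unishind$ to a bounded band of scales near $j$, after which the gap condition at index $j$ still closes the bound.

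I expect the main obstacle to be exactly the small-Bessel-bound claim of the second step: for band-limited shearlets there is no genuine spatial disjointness, so the bounded-overlap property and the constants $C_N$ must be harvested carefully from the Schwartz decay (or from \eqref{eq:shearlets:corotiling}), with uniformity across the possibly non-convergent scaling sequence $(\aj)_j$; and in the $j\pm1$ case the frequency-support reduction has to be run against \emph{all} ambient elements $\unishplain_{\tilde\Unishind}$, including the strongly anisotropic horizontal-cone ones at finer scales, whose $\xi_2$-support can be comparatively short. As in the proofs of Lemmas~\ref{lem:imginp:coeffdecay} and~\ref{lem:imginp:spatialdecay}, the bookkeeping of the interplay between the gap width $\mdiam_j$, the anisotropy exponents $\aj$, and the tube parameter $2^{\eps j}$ is the technical crux; conceptually the statement is the one above --- condition \ref{item:imginp:asympconv:gaps} is precisely what forces the product $2^{(\aj+\eps)j}\mdiam_j$ (the ``tube count'' times the ``strip-to-element-width ratio'') to tend to $0$.
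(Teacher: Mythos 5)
Your proof is correct and closes under \ref{item:imginp:asympconv:gaps}, but it takes a genuinely different route from the paper's. The paper keeps the second index explicit: it splits the maximum over $\Unishind_2$ into vertical, horizontal and boundary cases ($T_\ver$, $T_\hor$, $T_\emptyset$), bounds each $\abs{\sp{\indset{\mask{\mdiam_j}}\unishplain_{\Unishind_1}}{\unishplain_{\Unishind_2}}}$ by integrating the product of the two spatial Schwartz envelopes over the strip, and sums, obtaining the linear rate $\clustercoh{\scalpm{\cluster_j}}{\indset{\mask{\mdiam_j}} \Unishshort}\lesssim 2^{(\aj+\eps)j}\mdiam_j$. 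You instead uniformize over the second element---replacing $\ProM\unishplain_{\Unishind_2}$ by an arbitrary $g$ supported in the strip with $\norm{g}\leq1$---which removes the paper's orientation casework entirely (and automatically covers the coarse-scale elements, over which the maximum in Definition~\ref{def:framework:coherence} is also taken but which the paper's case split does not mention), and then argue in $\l{2}$: the norm bound $\norm{\ProM\unishplain_\Unishind}^2\lesssim_N 2^{\aj j}\mdiam_j\Schwartzpoly{k_1}^{-N}$, upgraded to a Bessel bound over the $k_2$-translates, plus Cauchy--Schwarz over the $\lesssim 2^{\eps j}$ translates in the tube, giving the weaker but still sufficient rate $(2^{(\aj+\eps)j}\mdiam_j)^{1/2}$. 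The Bessel step you single out as the crux does go through: for fixed $l,k_1$ the functions $\ProM\unish[\ver]{\aj}{j}{l}{k}$, $k_2\in\Z$, are $x_2$-translates at spacing $2^{-2j}$ of one profile whose $\xi_2$-support has length at most $2^{2j}$, so the periodization criterion for lattice translates has a single overlapping term and yields a Bessel constant $2^{2j}\sup_{\xi_2}\norm{\widehat{\ProM\unishplain_\Unishind}(\cdot,\xi_2)}^2\lesssim_N 2^{\aj j}\mdiam_j\Schwartzpoly{k_1}^{-N}$, exactly as you assert; and it is genuinely needed, since bounding each coefficient by $\norm{\ProM\unishplain_\Unishind}$ and multiplying by the tube count would only give $2^{\eps j}(2^{\aj j}\mdiam_j)^{1/2}$, which requires the stronger hypothesis $\mdiam_j\in\asympffaster{2^{-(\aj+2\eps)j}}$. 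Your treatment of the scales $j\pm1$ is at the same level of detail as the paper's (``slight modifications''), and the frequency-support digression there is superfluous once you have uniformized over $g$. What each approach buys: the paper's direct summation yields the sharper first-power dependence on $2^{(\aj+\eps)j}\mdiam_j$, which is echoed in the optimality discussion of Subsection~\ref{subsec:imginp:optimality}, while yours is shorter, uniform in the ambient frame element, and isolates the geometric mechanism (strip-to-element-width ratio times tube count) most transparently.
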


\begin{proof}
We first separate the neighboring scales,
\begin{equation} \label{eq:imginp:clustercoh:scales}
	\clustercoh{\scalpm{\cluster_j}}{\indset{\mask{\mdiam_j}} \Unishshort}
	\leq \clustercoh{\cluster_{j-1}}{\indset{\mask{\mdiam_j}} \Unishshort} + \clustercoh{\cluster_{j}}{\indset{\mask{\mdiam_j}} \Unishshort} + \clustercoh{\cluster_{j+1}}{\indset{\mask{\mdiam_j}} \Unishshort}.
\end{equation}
In the following, we will only consider the main-scale-term (the respective estimates of neighboring-scale-terms will then follow from slight modifications):
\begin{align}
	& \mathrel{\phantom{\leq}} \clustercoh{\cluster_{j}}{\indset{\mask{\mdiam_j}} \Unishshort}
	= \max_{\Unishind_2 \in \Unishgroup} \sum_{\Unishind_1 \in \cluster_j} \abs{\sp{\indset{\mask{\mdiam_j}} \unishplain_{\Unishind_1}}{\unishplain_{\Unishind_2}}} \\
	={} & \underbrace{\max_{\substack{\Unishind_2 \in \Unishgroup,\\ \dir=\ver}} \sum_{\Unishind_1 \in \cluster_j} \abs{\sp{\indset{\mask{\mdiam_j}} \unishplain_{\Unishind_1}}{\unishplain_{\Unishind_2}}}}_{=: T_\ver}
	{}+{} \underbrace{\max_{\substack{\Unishind_2 \in \Unishgroup,\\ \dir=\hor}} \sum_{\Unishind_1 \in \cluster_j} \abs{\sp{\indset{\mask{\mdiam_j}} \unishplain_{\Unishind_1}}{\unishplain_{\Unishind_2}}}}_{=: T_\hor}
	{}+{} \underbrace{\max_{\substack{\Unishind_2 \in \Unishgroup,\\ \dir=\emptyset}} \sum_{\Unishind_1 \in \cluster_j} \abs{\sp{\indset{\mask{\mdiam_j}} \unishplain_{\Unishind_1}}{\unishplain_{\Unishind_2}}}}_{=: T_\emptyset}.
\end{align}
To derive a better intuition for the following computations, it might be helpful to consider the visualizations of Figure~\ref{fig:imginp:clustercoh}.
\begin{proofsteps}{Case}
\item \label{prop:imginp:clustercoh:ver}
	We start by estimating $T_\ver$.
	For this, let $\Unishind_1 = (j, l, k; \aj, \ver) \in \cluster_j$ and $\Unishind_2 = (j, l', k'; \aj, \ver) \in \Unishgroup_j$.
	Using the rapid decay properties of the universal shearlets, we obtain ($N \geq 2$)
	\begin{align}
		& \abs[\Big]{\sp[\Big]{\indset{\mask{\mdiam_j}} \unish[\ver]{\aj}{j}{l}{k}}{\unish[\ver]{\aj}{j}{l'}{k'}}}
		\leq \integ[-\mdiam_j][\mdiam_j]{\integ[\R]{\abs[auto]{\unish[\ver]{\aj}{j}{l}{k}(x)} \abs[auto]{ \unish[\ver]{\aj}{j}{l'}{k'}(x)}}{dx_2}}{dx_1} \\
		\leq{} & C_N \integ[-\mdiam_j][\mdiam_j]{\integ[\R]{2^{(2+\aj)j} \Schwartzpoly{\shearcone{\ver}^l \pscalcone{\aj}{\ver}^j x - k }^{-N} \Schwartzpoly{\shearcone{\ver}^{l'} \pscalcone{\aj}{\ver}^j x - k'}^{-N}}{dx_2}}{dx_1} \\
		\leq{} & \begin{aligned}[t]
			C_N \integ[-\mdiam_j][\mdiam_j]{\integ[\R]{& 2^{(2+\aj)j} \Schwartzpoly{2^{\aj j}x_1 - k_1}^{-N} \Schwartzpoly{l 2^{\aj j} x_1 +2^{2 j} x_2 - k_2}^{-N} \\
				& \times \Schwartzpoly{2^{\aj j}x_1 - k'_1}^{-N} \Schwartzpoly{l' 2^{\aj j} x_1 + 2^{2 j} x_2 - k'_2}^{-N} }{dx_2}}{dx_1}.
		\end{aligned} \\
		={} & C_N \integ[-2^{\aj j}\mdiam_j][2^{\aj j}\mdiam_j]{\integ[\R]{\Schwartzpoly{x_1 - k_1}^{-N} \Schwartzpoly{l x_1 +x_2 - k_2}^{-N} \underbrace{\Schwartzpoly{x_1 - k'_1}^{-N}}_{\leq 1} \Schwartzpoly{l' x_1 + x_2 - k'_2}^{-N} }{dx_2}}{dx_1}.
	\end{align}
	Without loss of generality, we may assume that the maximum in $T_\ver$ is attained for some $\Unishind_2 \in \Unishgroup_j$.
	This leads to
	\begin{align}
		& T_\ver
		= \max_{\substack{\Unishind_2 \in \Unishgroup,\\ \dir=\ver}} \sum_{\Unishind_1 \in \cluster_j} \abs{\sp{\indset{\mask{\mdiam_j}} \unishplain_{\Unishind_1}}{\unishplain_{\Unishind_2}}}
		\leq \max_{(j,l',k';\aj,\ver) \in \Unishgroup_j} \sum_{\abs{l} \leq1} \sum_{k \in \Z^2} \abs[\Big]{\sp[\Big]{\indset{\mask{\mdiam_j}} \unish[\ver]{\aj}{j}{l}{k}}{\unish[\ver]{\aj}{j}{l'}{k'}}} \\
		\leq{} & C_N \max_{(j,l',k';\aj,\ver) \in \Unishgroup_j} \sum_{\substack{\abs{l} \leq1 \\ k \in \Z^2}} \integ[-2^{\aj j}\mdiam_j][2^{\aj j}\mdiam_j]{\integ[\R]{ \Schwartzpoly{x_1 - k_1}^{-N} \Schwartzpoly{l x_1 +x_2 - k_2}^{-N} \Schwartzpoly{l' x_1 + x_2 - k'_2}^{-N} }{dx_2}}{dx_1} \\
		\leq{} & C_N \max_{(j,l',k';\aj,\ver) \in \Unishgroup_j} \sum_{\abs{l}\leq1} \begin{aligned}[t]
			\integ[-2^{\aj j}\mdiam_j][2^{\aj j}\mdiam_j]{\integ[\R]{& \Schwartzpoly{l' x_1 + x_2 - k'_2}^{-N} \\
				& \times \underbrace{\Bigg( \sum_{k \in \Z^2}\Schwartzpoly{x_1 - k_1}^{-N} \Schwartzpoly{l x_1 +x_2 - k_2}^{-N} \Bigg)}_{\leq \tilde C_N} }{dx_2}}{dx_1}
		\end{aligned} \\
		\leq{} & C_N \max_{(j,l',k';\aj,\ver) \in \Unishgroup_j} \integ[-2^{\aj j}\mdiam_j][2^{\aj j}\mdiam_j]{\underbrace{\integ[\R]{ \Schwartzpoly{x_2 + l' x_1 - k'_1}^{-N} }{dx_2}}_{\leq \tilde C_N}}{dx_1} \leq C_N 2^{\aj j} h_j \to 0, \quad j \to \infty.
	\end{align}
\item \label{prop:imginp:clustercoh:hor}
	For the term $T_\hor$, let $\Unishind_1 = (j, l, k; \aj, \ver) \in \cluster_j$ and $\Unishind_2 = (j, l', k'; \aj, \hor) \in \Unishgroup_j$.
	For $N \geq 2$ we then have
	\begin{align}
		& \abs[\Big]{\sp[\Big]{\indset{\mask{\mdiam_j}} \unish[\ver]{\aj}{j}{l}{k}}{\unish[\hor]{\aj}{j}{l'}{k'}}}
		\leq \integ[-\mdiam_j][\mdiam_j]{\integ[\R]{\abs[auto]{\unish[\ver]{\aj}{j}{l}{k}(x)} \abs[auto]{\unish[\hor]{\aj}{j}{l'}{k'}(x)}}{dx_2}}{dx_1} \\
		\leq{} & C_N \integ[-\mdiam_j][\mdiam_j]{\integ[\R]{2^{(2+\aj)j} \Schwartzpoly{\shearcone{\ver}^l \pscalcone{\aj}{\ver}^j x - k}^{-N} \Schwartzpoly{\shearcone{\hor}^{l'} \pscalcone{\aj}{\hor}^j x - k'}^{-N}}{dx_2}}{dx_1} \\
		\leq{} &\begin{aligned}[t]
			C_N 2^{(2+\aj)j} & \integ[-\mdiam_j][\mdiam_j]{ \integ[\R]{\Schwartzpoly{2^{\aj j} x_1 - k_1}^{-N} \Schwartzpoly{l2^{\aj j} x_1 +2^{2j}x_2 - k_2}^{-N} \\
			& \times \Schwartzpoly{l'2^{\aj j} x_2 +2^{2j}x_1 - k'_1}^{-N} \Schwartzpoly{2^{\aj j} x_2 - k'_2}^{-N}}{dx_2}}{dx_1}
		\end{aligned} \\
		\leq{} & \begin{aligned}[t]
			C_N 2^{\aj j} & \integ[-\mdiam_j][\mdiam_j]{\integ[\R]{\Schwartzpoly{2^{\aj j} x_1 - k_1}^{-N} \Schwartzpoly{l2^{\aj j} x_1 +x_2 - k_2}^{-N} \\
			& \times \underbrace{ \Schwartzpoly{l'2^{(\aj-2)j} x_2 +2^{2j}x_1 - k'_1}^{-N} \Schwartzpoly{2^{(\aj-2) j} x_2 - k'_2}^{-N}}_{\leq 1}}{dx_2}}{dx_1}
		\end{aligned} \\
		\leq{} & C_N 2^{\aj j} \integ[-\mdiam_j][\mdiam_j]{ \Schwartzpoly{2^{\aj j} x_1 - k_1}^{-N}}{dx_1} \underbrace{\integ[\R]{\Schwartzpoly{x_2 + l2^{\aj j} x_1 - k_2}^{-N}}{dx_2}}_{\leq \tilde C_N}.
	\end{align}
	Summarizing, we obtain
	\begin{align}
		T_\hor ={}& \max_{\substack{\Unishind_2 \in \Unishgroup,\\ \dir=\hor}} \sum_{\Unishind_1 \in \cluster_j} \abs{\sp{\indset{\mask{\mdiam_j}} \unishplain_{\Unishind_1}}{\unishplain_{\Unishind_2}}}
		\leq \max_{(j,l',k';\aj,\hor) \in \Unishgroup_j} \sum_{\abs{l} \leq1} \sum_{\substack{k \in \Z^2,\\ \abs{k_2-lk_1}\leq2^{\eps j}}} \abs[\Big]{\sp[\Big]{\indset{\mask{\mdiam_j}} \unish[\ver]{\aj}{j}{l}{k}}{\unish[\hor]{\aj}{j}{l'}{k'}}} \\
		\leq{} & C_N 2^{\aj j} \max_{(j,l',k';\aj,\hor) \in \Unishgroup_j} 2^{\eps j} \sum_{k_1 \in \Z} \integ[-\mdiam_j][\mdiam_j]{ \Schwartzpoly{2^{\aj j} x_1 - k_1}^{-N}}{dx_1} \\
		={} & C_N 2^{\eps j} 2^{\aj j} \max_{(j,l',k';\aj,\hor) \in \Unishgroup_j} \integ[-\mdiam_j][\mdiam_j]{ \underbrace{\sum_{k_1 \in \Z} \Schwartzpoly{2^{\aj j} x_1 - k_1}^{-N}}_{\leq \tilde C_N}}{dx_1} \leq C_N 2^{(\aj + \eps) j} h_j \to 0, \quad j \to \infty.
	\end{align}
\item
	Since boundary shearlets are a combination of the horizontal and vertical elements, it is easy to show that there exists a constant $C > 0$ such that $T_\emptyset \leq C (T_\hor + T_\ver)$.
	Therefore, the claim follows from the first two cases.\qedhere
\end{proofsteps}
\end{proof}

\begin{figure}
	\centering
	\subfigure[{\ref{prop:imginp:clustercoh:ver} corresp. to $T_\ver$}]{
		\centering
		\includegraphics[width=.25\textwidth]{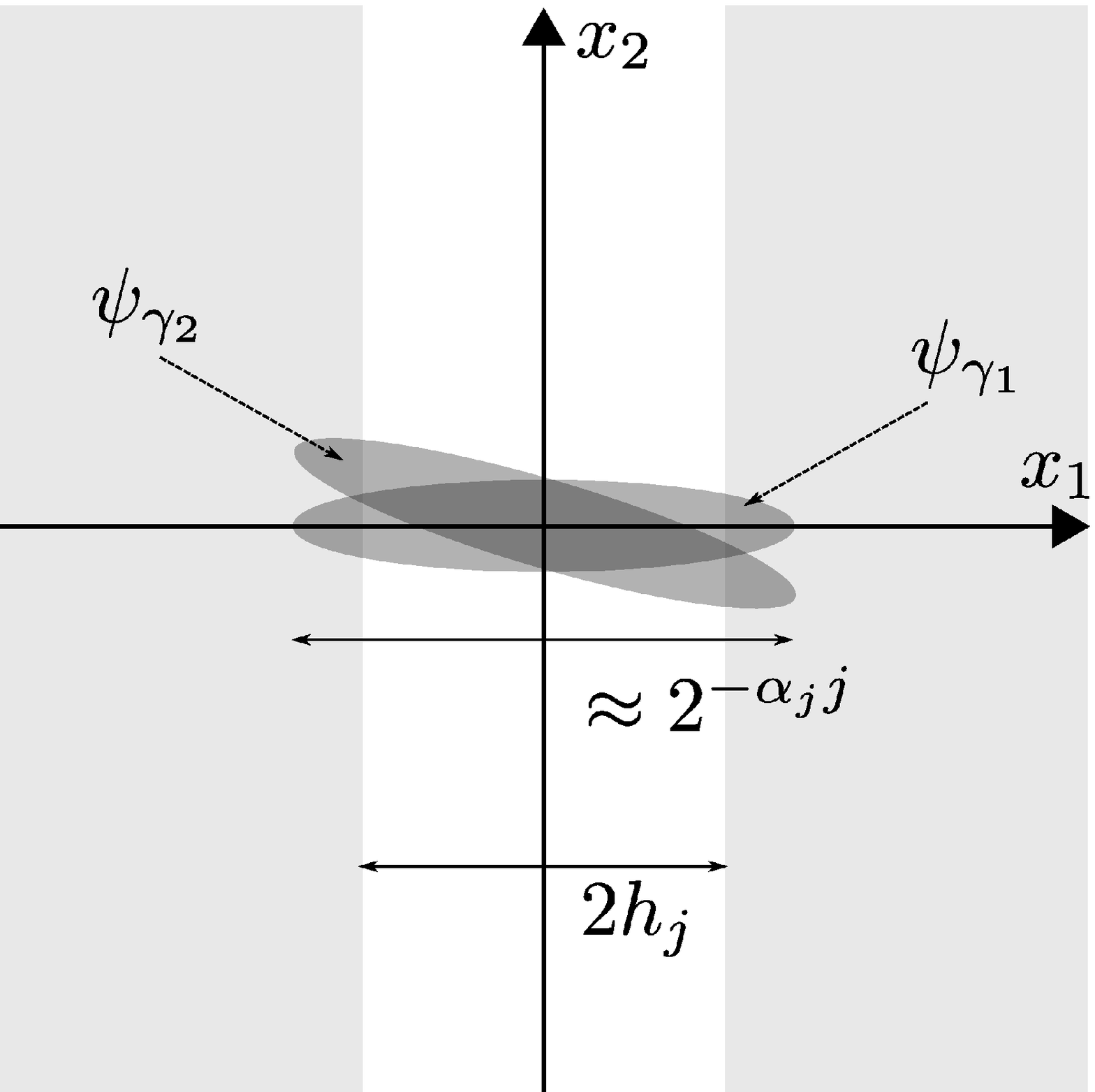}
		\label{fig:imginp:clustercoh:ver}
	}%
	\qquad
	\subfigure[{\ref{prop:imginp:clustercoh:hor} corresp. to $T_\hor$}]{
		\centering
		\includegraphics[width=.25\textwidth]{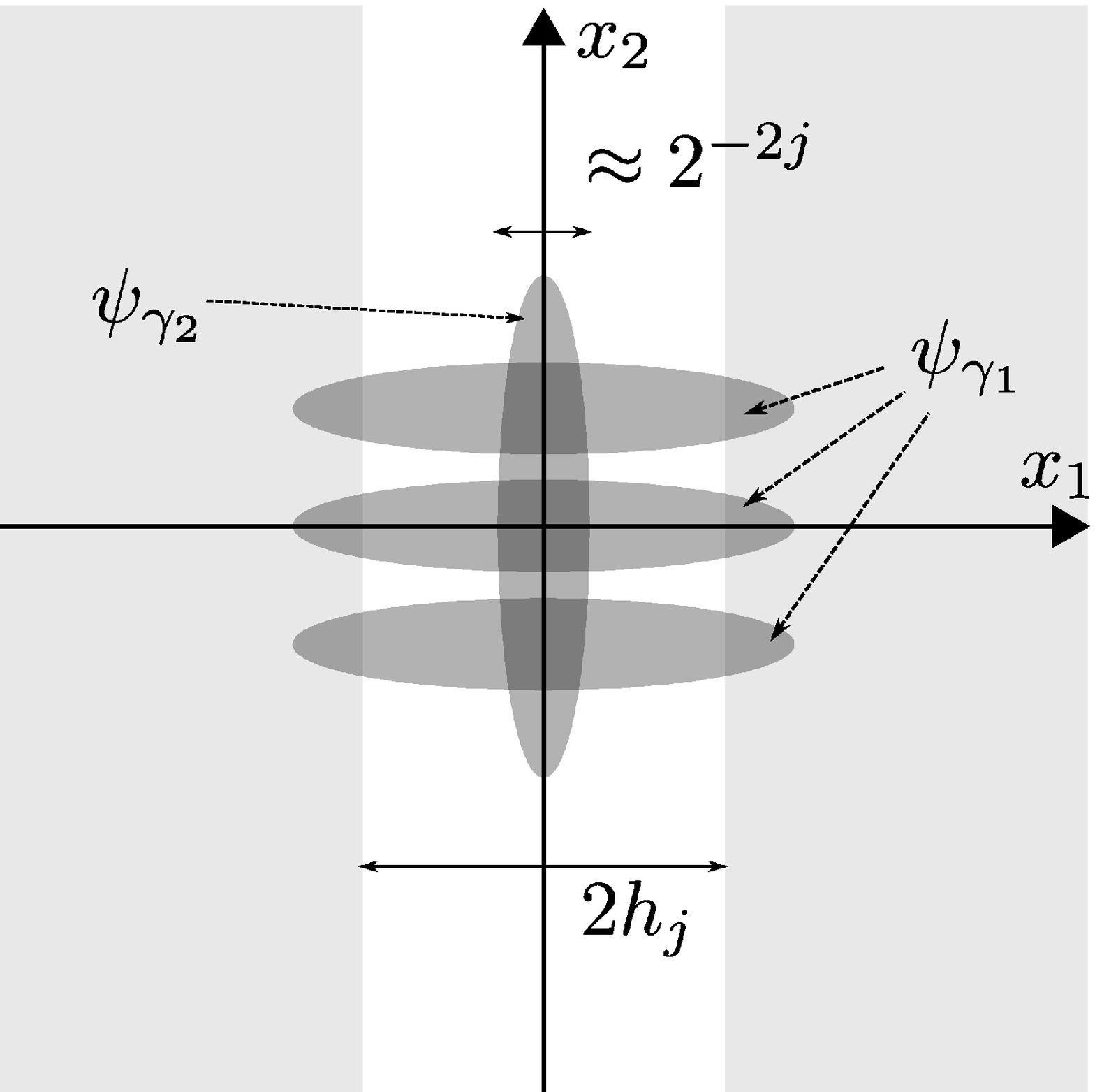}
		\label{fig:imginp:clustercoh:hor}
	}%
	\caption{This illustration shows how strong the essential shearlet supports may overlap (at worst) in the proof of Proposition~\ref{prop:imginp:clustercoh}.}
	\label{fig:imginp:clustercoh}
\end{figure}

Intuitively, the scalar products $\abs{\sp{\indset{\mask{\mdiam_j}} \unishplain_{\Unishind_1}}{\unishplain_{\Unishind_2}}}$ measure how strong the corresponding frame elements overlap (or interact) inside of $\mask{\mdiam_j}$ (cf. Figure~\ref{fig:imginp:clustercoh}).
The definition of $\cluster_j$ forces $\unishplain_{\Unishind_1}$ to be almost horizontally oriented, whereas $\unishplain_{\Unishind_2}$ can have arbitrary direction.

Also notice that on the one hand, in \ref{prop:imginp:clustercoh:ver} of the proof, it was sufficient to assume that $\mdiam_j$ decays faster than the essential length of the shearlets elements.
On the other hand, the essential support of the vertically oriented shearlets in \ref{prop:imginp:clustercoh:hor} could be entirely contained in $\mask{\mdiam_j}$ (cf. Figure~\ref{fig:imginp:clustercoh:hor}).
Hence, we shall not allow too many vertical translations in $\cluster_j$.
However, the ``tube width'' has to be at least $2^{\eps j}$ to guarantee sufficient sparsity (see final remark of previous subsection).
These observations show the fundamental trade-off relation between clustered sparsity and cluster coherence.
We have already described this interaction subsequent to Theorem~\ref{thm:framework:errorl1min}, and this can now be made precise for the specific setting of image inpainting.
In particular, we have derived an intuition of how the cluster coherence is related to the amount of missing information: In the proof of Proposition~\ref{prop:imginp:clustercoh}, we observed that $\clustercoh{\scalpm{\cluster_j}}{\indset{\mask{\mdiam_j}} \Unishshort}$ can be estimated by $2^{(\aj+\eps) j} \mdiam_j$ implying that the coherence will decrease whenever the gap width does.

\subsection{Optimal Gap Sizes}
\label{subsec:imginp:optimality}

As already mentioned at the beginning of this section, the condition \ref{item:imginp:asympconv:gaps} on the gap width in Theorem~\ref{thm:imginp:asympconv} is
only a sufficient criterion for asymptotically perfect inpainting. It is therefore an essential (and still open) question, whether this assumption is also necessary,
i.e., the choice of $\mdiam_j \approx 2^{-\aj j}$ would be \emph{optimal}. In fact, this would, for instance, formally prove that shearlets are superior to wavelets when recovering curvilinear structures and, more general, that directional scaling has an improving effect on the process of inpainting. Such a result however seems to be unreachable to us at the moment, in particular, because of the vastly unknown $\l{1}$-recoveries $\modelrec_j$. Without the introduction of novel analysis techniques, it appears almost impossible to find an appropriate lower estimate for $\anorm{\modelrec_j - \model_j}{\Unishshort}$.

In order to tackle this problem, one could use another inpainting algorithm that returns more explicit recoveries.
A very simple (but prominent) example is \emph{one-step thresholding} which was also analyzed in \cite{king2014analysis}.
In this method, the damaged signal $\ProK \model_j$ is first analyzed with $\Unishshort$ and small coefficients are truncated afterward.
A final synthesis yields the recovery
\begin{equation}\label{eq:imginp:thresholding}
	\modelrec_j := \OpSynthesis{\Unishshort} (\indcoeff{\cluster_{\beta_j}} \OpAnalysis{\Unishshort} (\ProK\model_j)),
\end{equation}
where $\cluster_{\beta_j} := \{ (j, l, k; \aj, \dir) \suchthat \abs{\sp{\ProK\model_j}{\unish[\ver]{\alpha}{j}{l}{k}}} \geq \beta_j \}$ for certain thresholds $\beta_j$.
Compared to $\l{1}$-minimization, $\modelrec_j$ is given by the explicit expression \eqref{eq:imginp:thresholding};
but conversely, we have virtually no information about the geometric form of the thresholding-clusters $\cluster_{\beta_j}$.
Due to the band-limiting of $\Unish$, the shearlet elements are strongly oscillating in spatial domain, which makes is extremely difficult to determine the large coefficients.
It is particularly impossible to choose the thresholds $\beta_j$ such that $\cluster_{\beta_j}$ has a ``tube shape'' as in Subsection~\ref{subsec:imginp:clusteredsparse}.
Although the drawbacks of simple thresholding and $\l{1}$-optimization are quite different---in both cases, a certain part of the recovery is only implicitly given---a statement on optimality seems to be currently out of our reach.

However, we strongly believe that at least a weaker version of $(\mdiam_j)_j \in \asympffaster{2^{-(\aj+\eps) j}}$ is necessary to achieve asymptotically perfect inpainting.
To give an intuitive argument for this conjecture, we may consider some cluster index $(j, l, k; \aj, \ver) \in \cluster_j$.
Recalling the notation $\translind{t}{\ver} = (2^{-\aj j} k_1, 2^{-2j}(k_2 - l k_1))$ from Subsection~\ref{subsec:imginp:clusteredsparse}, we observe that $\unish[\ver]{\alpha}{j}{l}{k}$ is ``concentrated'' inside of $\mask{\mdiam_j}$
provided that $\abs{k_1} \leq 2^{\aj j} \mdiam_j$. Hence---as a heuristic argument---the ``content'' of the missing part might be estimated by
\begin{align}
	\anorm{\ProM \model_j}{\Unishshort} &= \sum_{\unishplain \in \Unishshort} \abs{\sp{\ProM \model_j}{\unishplain}} \gtrsim \sum_{\substack{(j, l, k; \aj, \ver) \in \cluster_j \\ l=0,\abs{k_1} \leq 2^{\aj j} \mdiam_j}} \underbrace{\abs{\sp{\model_j}{\unish[\ver]{\alpha}{j}{l}{k}}}}_{\approx 2^{(5-\aj)j/2}} \\
	&\approx 2^{\eps j} 2^{\aj j} \mdiam_j 2^{(5-\aj)j/2} = 2^{5(1+2\eps)j/2} 2^{\aj j/2} \mdiam_j.
\end{align}
This at least indicates that excessively enlarging $\mdiam_j$ produces a lack of ``information'' and will cause recoveries deviating
strongly from the original image. Interestingly, a similar estimate for the energy of $\ProM \model_j$ gives
\begin{align}
	\Lebnorm{\ProM \model_j}^2 &= \sum_{\unishplain \in \Unishshort} \abs{\sp{\ProM \model_j}{\unishplain}}^2 \gtrsim \sum_{\substack{(j, l, k; \aj, \ver) \in \cluster_j \\ l=0,\abs{k_1} \leq 2^{\aj j} \mdiam_j}} \abs{\sp{\model_j}{\unish[\ver]{\alpha}{j}{l}{k}}}^2 \\
	&\approx 2^{\eps j} 2^{\aj j} \mdiam_j 2^{(5-\aj)j} = 2^{(5+\eps)j} \mdiam_j.
\end{align}
Since the term $2^{\aj j}$ cancels out here, our estimate from below does not involve the crucial feature of anisotropy.
Therefore, measuring the error in terms of energy seems to be inappropriate to prove an optimality result that compares shearlets and wavelets (see also Subsection~\ref{subsec:framework:errorestimate}).

\section{Extensions and Further Directions}
\label{sec:extension}

The two main Theorems~\ref{thm:shearlets:unishprop} and \ref{thm:imginp:asympconv} are amenable to several extensions.
In the following, we present a brief listing of various directions in which our modeling situation could be generalized to:

\begin{itemize}[leftmargin=3em]
\item
	\emph{$\alpha$-molecules:} We anticipate that the notion of universal shearlets, and in particular the idea of scaling sequences, can be extended to the setting of $\alpha$-molecules \cite{GKKS14}, which is a novel framework to unify numerous representation systems---including ridgelets, wavelets, shearlets, and curvelets---as well as their sparse approximation properties.
\item
	\emph{Non-asymptotic analysis:} The statement of Theorem~\ref{thm:imginp:asympconv} is of purely asymptotical nature and we did not investigate the decay constants in \eqref{eq:imginp:asympconv}.
	In particular, the constants will blow up when $\aj$ is closed to $0$.
	Therefore, the non-asymptotical case, i.e., when only a finite number of scales is considered, might be also interesting.
	This directly leads to the optimization problem of finding the ``best possible'' $\aj$ in the range of $0 \leq \aj \leq 2$ (the second bound corresponds to the wavelet case).
	In practice, this challenge could be tackled by a combination of $\l{1}$-minimization and an appropriate \emph{dictionary learning} algorithm such as K-SVD \cite{rubenstein2013analysis}, which tries to learn the optimal scaling sequence $(\aj)_j$.
\item
	\emph{Curvilinear singularities:} By the \emph{tubular neighborhood theorem}, it is possible---though
    very technical---to extend our results toward a curve model instead of the straight-line distribution $\model$. This can be done similarly to the approach in \cite{donoho2013microlocal}, where this technique
    was applied to the problem of geometric separation.
\item
	\emph{Other mask models:} Other models for the mask such as ball or box shaped structures can be imagined.
	The general strategy of our work should be a conceivable path for an analysis also in those cases, but certainly, the technical details will highly depend on the chosen model for the mask.
\item
	\emph{Other image models:} In this paper, we focused on the recovery of anisotropic features. However, one might also ask about inpainting of other structures such as texture, or even a superposition of different structural components. This certainly requires a careful adaption of the arguments, but we believe that our general strategy is applicable. In the case of texture, one could use for instance \emph{Gabor systems}; but in order to perform an asymptotic analysis, an additional parameter needs then to be inserted as it was done in \cite{Kut13}.
\item
	\emph{Higher dimensions:} The construction of Guo and Labate in \cite{guo2013construction} also contains higher
    dimensional versions of shearlets. The respective modifications of universal shearlets are straightforward, hence
    we decided to not include those. Of special interest is then the three-dimensional case, which corresponds to the
    problem of \emph{video-inpainting}.
\item
	\emph{Noise:} A very common question is to ask for the stability of a certain algorithm when the input signal is
    affected by noise. For the problem of (abstract) inpainting, theoretical results are proven in \cite{king2014analysis}
    and can be directly applied to our analysis, showing that it is indeed stable toward noise.
\end{itemize}

\section{Proofs}
\label{sec:proofs}

\subsection{Proof of Theorem \ref{thm:shearlets:unishprop}}
\label{subsec:proofs:unishprop}

\begin{proof}[\nopunct]\mbox{}

\emph{Band-limiting and vanishing moment property:}
	Observing that $(0, 0) \not\in \supp\Corofunc_j \subset \Coro_j$, this immediately follows from Definition~\ref{def:shearlets:unishdef}.

\emph{Smoothness:}
	Due to the band-limiting of $\Unish$, it suffices to show that the Fourier transforms of the elements are smooth.
	This then automatically implies that $\Unish$ is contained in $\Schwartz[\R^2]$.

	The smoothness of $\UnishLow$ and $\UnishInt$ are induced by their smooth defining functions $\meyerscal$ and $\conefunc$.
	Hence it remains to consider some element $\unish{\aj}{j}{l}{k} \in \UnishBound$. In the interior of $\Cone{\hor}$ and $\Cone{\ver}$, the smoothness is again obvious.
	Thus, we only need to analyze the boundary lines of the cones which are given by $\{\abs{\xi_1} = \abs{\xi_2}\}$.

	For this, let us use the shortcut $\lmax := 2^{(2-\aj) j}$ for the maximal shearing number (on scale $j$) and simplify the definition of $\unishft{\aj}{j}{\lmax}{k}$ (for $j \geq 1$):
	\begin{equation}
		\unishft{\aj}{j}{\lmax}{k} (\xi) = 2^{-\frac{(2+\aj)j}{2}-\frac{1}{2}} \cdot \begin{cases}
			\Corofunc( 2^{-2j}\xi )\conefunc\opleft( \lmax \left( \xi_2 / \xi_1 - 1 \right) \opright) e^{-\pi i [2^{-2j} \xi_1 k_1 + 2^{-\aj j}(\xi_2 - \xi_1)k_2]}, & \xi \in \Cone{\hor}, \\
			\Corofunc( 2^{-2j}\xi )\conefunc\opleft( \lmax \left( \xi_1 / \xi_2 - 1 \right) \opright) e^{-\pi i [2^{-2j} \xi_1 k_1 + 2^{-\aj j}(\xi_2 - \xi_1)k_2]}, & \xi \in \Cone{\ver}.
		\end{cases}
	\label{eq:proofs:unishprop:boundaryterm}
	\end{equation}
	In case of $\xi_1 = \pm\xi_2$, both terms coincide implying the continuity of $\unishft{\aj}{j}{l}{k}$.
	By the same argument, the continuity of $\unishft{\aj}{j}{-\lmax}{k}$ is verified.
	Next, we compute the partial derivatives of both terms in \eqref{eq:proofs:unishprop:boundaryterm} which are given by
	\begin{multline}
		\partder[\xi_1 = \xi_2]{}{\xi_1} \left[ \Corofunc( 2^{-2j}\xi )\conefunc\opleft( \lmax \left( \tfrac{\xi_2}{\xi_1} - 1 \right) \opright) e^{-\pi i [2^{-2j} \xi_1 k_1 + 2^{-\aj j}(\xi_2 - \xi_1)k_2]} \right] \\
		\shoveleft  = 2^{-2j} \partder{\Corofunc}{\xi_1}( 2^{-2j}\xi_1, 2^{-2j}\xi_1 ) \conefunc(0) e^{-2^{-2j} \pi i  \xi_1 k_1} - \frac{\lmax}{\xi_1} \Corofunc( 2^{-2j}\xi_1, 2^{-2j}\xi_1 ) \conefunc'(0) e^{-2^{-2j} \pi i  \xi_1 k_1} \\*
		- \pi i (2^{-2j} k_1 - 2^{-\aj j}k_2) \Corofunc( 2^{-2j}\xi_1, 2^{-2j}\xi_1 ) \conefunc(0) e^{-2^{-2j} \pi i  \xi_1 k_1}
	\label{eq:proofs:unishprop:partderhor}
	\end{multline}
	and
	\begin{multline}
		\partder[\xi_1 = \xi_2]{}{\xi_1} \left[ \Corofunc( 2^{-2j}\xi )\conefunc\opleft( \lmax \left( \tfrac{\xi_1}{\xi_2} - 1 \right) \opright) e^{-\pi i [2^{-2j} \xi_1 k_1 + 2^{-\aj j}(\xi_2 - \xi_1)k_2]} \right] \\
		\shoveleft  = 2^{-2j} \partder{\Corofunc}{\xi_1}( 2^{-2j}\xi_1, 2^{-2j}\xi_1 ) \conefunc(0) e^{-2^{-2j} \pi i  \xi_1 k_1} + \frac{\lmax}{\xi_1} \Corofunc( 2^{-2j}\xi_1, 2^{-2j}\xi_1 	) \conefunc'(0) e^{-2^{-2j} \pi i  \xi_1 k_1} \\*
		- \pi i (2^{-2j} k_1 - 2^{-\aj j}k_2) \Corofunc( 2^{-2j}\xi_1, 2^{-2j}\xi_1 ) \conefunc(0) e^{-2^{-2j} \pi i  \xi_1 k_1}.
	\label{eq:proofs:unishprop:partderver}
	\end{multline}
	These two expressions indeed coincide, since $\conefunc'(0) = 0$.
	The same can be done in a similar fashion for the partial derivative with respect to $\xi_2$, and by obvious modifications, one verifies the smoothness for $\unishft{\aj}{j}{-\lmax}{k}$ as well as for the case $j = 0$.
	Finally, the differentiability of higher order is proven by induction and successive use of \eqref{eq:shearlets:bump2}.
	
\emph{Parseval frame property:}
	In order to prove this property, the general strategy is to first make use of Parseval's identity to eliminate the translations (modulations in the Fourier representation,
    respectively), followed by application of the covering properties \eqref{eq:shearlets:corotiling} and \eqref{eq:shearlets:bump1}.

	For this, let $f \in \Leb[\R^2]{2}$ be arbitrary. We now consider the respective parts of $\Unish$ separately:

	\begin{proofsteps}{Case}
	\item\label{thm:shearlets:unishprop:bnd}
		Boundary shearlets with $j \geq 1$: By Plancherel's Theorem, we obtain
		\begin{multline}
			\sum_{k \in \Z^2} \abs{\sp{f}{\unish{\aj}{j}{\lmax}{k}}}^2
			= \sum_{k \in \Z^2} \abs{\sp{\ft f}{\unishft{\aj}{j}{\lmax}{k}}}^2 = \sum_{k \in \Z^2} \abs[\bigg]{ \integ[\R^2]{2^{-\frac{(2+\aj)j}{2}-\frac{1}{2}} \ft f(\xi) \Corofunc( 2^{-2j}\xi ) e^{\pi i \xi^\T \pscalcone{\aj}{\hor}^{-j} \shearcone{\hor}^{-\lmax} k} \\
			\times \Big[ \indset{\Cone{\hor}}(\xi) \hphantom{\cdot} \Conefunc{\hor}\opleft( \xi^\T \pscalcone{\aj}{\hor}^{-j} \shearcone{\hor}^{-\lmax} \opright)
			+ \indset{\Cone{\ver}}(\xi) \hphantom{\cdot} \Conefunc{\ver}\opleft( \xi^\T \pscalcone{\aj}{\ver}^{-j} \shearcone{\ver}^{-\lmax} \opright) \Big]}{d\xi} }^2.
		\label{eq:proofs:unishprop:boundaryint}
		\end{multline}
		To apply Parseval's identity, we perform a substitution
		\begin{equation}
			\eta^\T := 2^{-1} \xi^\T \pscalcone{\aj}{\hor}^{-j} \shearcone{\hor}^{-\lmax}
			\quad\iff\quad \xi = \xi(\eta) = (2^{2j+1}\eta_1, 2^{2j+1}\eta_1 + 2^{\aj j + 1} \eta_2).
		\end{equation}
		The single expressions of \eqref{eq:proofs:unishprop:boundaryint} then have the form
		\begin{align}
			\Conefunc{\hor}\opleft( \xi^\T \pscalcone{\aj}{\hor}^{-j} \shearcone{\hor}^{-\lmax} \opright) &= \conefunc\opleft( \tfrac{\eta_2}{\eta_1} \opright), \\
			\Conefunc{\ver}\opleft( \xi^\T \pscalcone{\aj}{\ver}^{-j} \shearcone{\ver}^{-\lmax} \opright) &= \conefunc\opleft( 2^{(2-\aj)j} \left( \xi_1 / \xi_2 - 1 \right) \opright) = \conefunc\opleft( - \tfrac{\eta_2}{\eta_1 + 2^{(\aj-2)j} \eta_2} \opright), \\
			\Corofunc( 2^{-2j}\xi ) &= \Corofunc\opleft( 2\eta_1, 2[\eta_1 + 2^{(\aj-2)j} \eta_2] \opright).
		\end{align}
		By \eqref{eq:shearlets:corosupp}, the mapping $(\eta_1, \eta_2) \mapsto \Corofunc( 2\eta_1, 2[\eta_1 + 2^{(\aj-2)j} \eta_2])$ is supported in the ``strip'' $\{\abs{\eta_1} \leq 1/4\}$.
		Since $\supp \conefunc \subset \intvcl{-1}{1}$, we can conclude that the mappings
		\begin{align}
			(\eta_1, \eta_2) \mapsto U_{(\hor),j}(\eta) &:= \Corofunc\opleft( 2\eta_1, 2[\eta_1 + 2^{(\aj-2)j} \eta_2] \opright) \conefunc\opleft( \tfrac{\eta_2}{\eta_1} \opright), \\
			(\eta_1, \eta_2) \mapsto U_{(\ver),j}(\eta) &:= \Corofunc\opleft( 2\eta_1, 2[\eta_1 + 2^{(\aj-2)j} \eta_2] \opright) \conefunc\opleft( - \tfrac{\eta_2}{\eta_1 + 2^{(\aj-2)j} \eta_2} \opright)
		\end{align}
		are both supported in the square $Q^2 := \intvcl{-\frac{1}{2}}{\frac{1}{2}}^2$. Here, we used
		\begin{align}
			\abs[auto]{\tfrac{-\eta_2}{\eta_1 + 2^{(\aj-2)j} \eta_2}} \leq 1 \ \implies \ \abs[auto]{\tfrac{\eta_2}{\eta_1}} \leq \abs[auto]{1 + 2^{(\aj-2)j} \tfrac{\eta_2}{\eta_1}} \leq 1 + 2^{(\aj-2)j} \abs[auto]{\tfrac{\eta_2}{\eta_1}} \ \implies \ \abs[auto]{\tfrac{\eta_2}{\eta_1}} \leq \tfrac{1}{1 - 2^{(\aj-2)j}} \leq 2,
		\end{align}
		where in the last estimate is due to $\aj \leq 2 - 1 / j$.
		With this observation, we can proceed in \eqref{eq:proofs:unishprop:boundaryint} by
		\begin{align}
			& \sum_{k \in \Z^2} \abs{\sp{f}{\unish{\aj}{j}{\lmax}{k}}}^2 \\
			={} & \sum_{k \in \Z^2} \abs[\bigg]{ \integ[Q^2]{2^{\frac{(2+\aj)j}{2}+\frac{1}{2}} \ft f\opleft(\xi(\eta)\opright) \Big[\indset{\Cone{\hor}}\opleft(\xi(\eta)\opright) \hphantom{\cdot} U_{(\hor),j}(\eta) + \indset{\Cone{\ver}}\opleft(\xi(\eta)\opright) \hphantom{\cdot} U_{(\ver),j}(\eta) \Big] e^{2\pi i \eta^\T k}}{d\eta} }^2 \\
			={} & \integ[Q^2]{2^{(2+\aj)j+1} \abs{\ft f\opleft(\xi(\eta)\opright)}^2 \abs[\Big]{\indset{\Cone{\hor}}\opleft(\xi(\eta)\opright) \hphantom{\cdot} U_{(\hor),j}(\eta) + \indset{\Cone{\ver}}\opleft(\xi(\eta)\opright) \hphantom{\cdot} U_{(\ver),j}(\eta) }^2}{d\eta} \\
			={} & \integ[\Cone{\hor}]{ \abs{\ft f(\xi)}^2 \abs{\Corofunc( 2^{-2j}\xi )}^2 \abs{\conefunc\opleft( \lmax \left( \xi_2 / \xi_1 - 1 \right) \opright)}^2}{d\xi} + \integ[\Cone{\ver}]{ \abs{\ft f(\xi)}^2 \abs{\Corofunc( 2^{-2j}\xi )}^2 \abs{\conefunc\opleft( \lmax \left( \xi_1 / \xi_2 - 1 \right) \opright)}^2}{d\xi}.
		\end{align}
		A very similar computation gives
		\begin{align}
			\sum_{k \in \Z^2} \abs{\sp{f}{\unish{\aj}{j}{-\lmax}{k}}}^2
			& = \begin{aligned}[t]
			 &\integ[\Cone{\hor}]{ \abs{\ft f(\xi)}^2 \abs{\Corofunc( 2^{-2j}\xi )}^2 \abs{\conefunc\opleft( \lmax \left( \xi_2 / \xi_1 + 1 \right) \opright)}^2}{d\xi} \\
			 & + \integ[\Cone{\ver}]{ \abs{\ft f(\xi)}^2 \abs{\Corofunc( 2^{-2j}\xi )}^2 \abs{\conefunc\opleft( \lmax \left( \xi_1 / \xi_2 + 1 \right) \opright)}^2}{d\xi}.
			 \end{aligned}
		\end{align}
	\item\label{thm:shearlets:unishprop:bndj0}
		Boundary shearlets with $j = 0$:
		Observing that $\supp \Corofunc \subset Q^2$, we obtain
		\begin{align}
			& \mathrel{\phantom{=}} \sum_{k \in \Z^2} \abs{\sp{f}{\unish{\aj}{0}{\pm1}{k}}}^2
			= \sum_{k \in \Z^2} \abs{\sp{\ft f}{\unishft{\aj}{0}{\pm1}{k}}}^2 \\
			& = \sum_{k \in \Z^2} \abs[\bigg]{ \integ[Q^2]{\ft f(\xi) \Corofunc(\xi)  \Big[\indset{\Cone{\hor}}(\xi) \conefunc\opleft(\tfrac{\xi_2}{\xi_1} \mp 1 \opright) + \indset{\Cone{\ver}}(\xi) \conefunc\opleft(\tfrac{\xi_1}{\xi_2} \mp 1 \opright) \Big] e^{2\pi i \xi^\T k}}{d\xi} }^2 \\
			& = \integ[\Cone{\hor}]{\abs{\ft f(\xi)}^2 \abs{\Corofunc(\xi)}^2 \abs[auto]{\conefunc\opleft(\tfrac{\xi_2}{\xi_1} \mp 1 \opright)}^2 }{d\xi} + \integ[\Cone{\ver}]{\abs{\ft f(\xi)}^2 \abs{\Corofunc(\xi)}^2 \abs[auto]{\conefunc\opleft(\tfrac{\xi_1}{\xi_2} \mp 1 \opright)}^2}{d\xi}.
		\end{align}
	\item\label{thm:shearlets:unishprop:int}
		Interior shearlets:
		The substitution $\eta^\T = \xi^\T \pscalcone{\aj}{\dir}^{-j} \shearcone{\dir}^{-l}$ for $\abs{l} < \lmax$, $\dir \in \{ \hor, \ver \}$ yields
		\begin{align}
			\sum_{k \in \Z^2} \abs{\sp{f}{\unish[\hor]{\aj}{j}{l}{k}}}^2
			&= \integ[\R^2]{ \abs{\ft f(\xi)}^2 \abs{\Corofunc( 2^{-2j}\xi )}^2 \abs[auto]{\conefunc\opleft( \lmax  \tfrac{\xi_2}{\xi_1} - l \opright)}^2}{d\xi}, \\
			\sum_{k \in \Z^2} \abs{\sp{f}{\unish[\ver]{\aj}{j}{l}{k}}}^2
			&= \integ[\R^2]{ \abs{\ft f(\xi)}^2 \abs{\Corofunc( 2^{-2j}\xi )}^2 \abs[auto]{\conefunc\opleft( \lmax  \tfrac{\xi_1}{\xi_2} - l \opright)}^2}{d\xi}.
		\end{align}
	\item\label{thm:shearlets:unishprop:coarse}
		Coarse scaling functions:
		Since $\supp\Scalfunc \subset Q^2$, we have
		\begin{equation}
			\sum_{k \in \Z^2} \abs{\sp{f}{\unishplain_{-1,k}}}^2
			= \sum_{k \in \Z^2} \abs[\bigg]{\integ[Q^2]{\ft f(\xi) \ft\Scalfunc(\xi) e^{-2\pi i \xi^\T k}}{d\xi}}^2
			= \integ[\R^2]{ \abs{\ft f(\xi)}^2 \abs{\ft\Scalfunc(\xi)}^2}{d\xi}.
		\end{equation}
	\end{proofsteps}
	Finally, by using \ref{thm:shearlets:unishprop:bnd}--\ref{thm:shearlets:unishprop:coarse}, we can conclude that
	\begin{align}
		& \mathrel{\phantom{=}} \sum_{\unishplain \in \Unish} \abs{\sp{f}{\unishplain}}^2 \\
		& = \sum_{k \in \Z^2} \bigg( \abs{\sp{f}{\unishplain_{-1,k}}}^2 + \sum_{\dir \in \{ \hor,\ver \}}\sum_{j \in \Nzero} \sum_{\abs{l} < \lmax} \abs{\sp{f}{\unish[\dir]{\aj}{j}{l}{k}}}^2 + \sum_{j \in \Nzero} \sum_{l = \pm\lmax} \abs{\sp{f}{\unish{\aj}{j}{l}{k}}}^2 \bigg) \\
		& = \begin{aligned}[t]
			&\integ[\R^2]{ \abs{\ft f(\xi)}^2 \abs{\ft\Scalfunc(\xi)}^2}{d\xi} \\
			& + \integ[\R^2]{ \abs{\ft f(\xi)}^2 \sum_{j \in \Nzero} \abs{\Corofunc( 2^{-2j}\xi )}^2 \bigg[ \sum_{\abs{l} < \lmax} \abs[auto]{\conefunc\opleft( \lmax \tfrac{\xi_2}{\xi_1} - l \opright)}^2 + \sum_{\abs{l} < \lmax} \abs[auto]{\conefunc\opleft( \lmax \tfrac{\xi_1}{\xi_2} - l \opright)}^2 \bigg]}{d\xi} \\
			& + \integ[\Cone{\hor}]{ \abs{\ft f(\xi)}^2 \sum_{j \in \Nzero} \abs{\Corofunc( 2^{-2j}\xi )}^2 \abs{\conefunc\opleft( \lmax \left( \xi_2 / \xi_1 - 1 \right) \opright)}^2}{d\xi} \\
			& + \integ[\Cone{\ver}]{ \abs{\ft f(\xi)}^2 \sum_{j \in \Nzero} \abs{\Corofunc( 2^{-2j}\xi )}^2 \abs{\conefunc\opleft( \lmax \left( \xi_1 / \xi_2 - 1 \right) \opright)}^2}{d\xi}\\
			& + \integ[\Cone{\hor}]{ \abs{\ft f(\xi)}^2 \sum_{j \in \Nzero} \abs{\Corofunc( 2^{-2j}\xi )}^2 \abs{\conefunc\opleft( \lmax \left( \xi_2 / \xi_1 + 1 \right) \opright)}^2}{d\xi} \\
			& + \integ[\Cone{\ver}]{ \abs{\ft f(\xi)}^2 \sum_{j \in \Nzero} \abs{\Corofunc( 2^{-2j}\xi )}^2 \abs{\conefunc\opleft( \lmax \left( \xi_1 / \xi_2 + 1 \right) \opright)}^2}{d\xi}
		\end{aligned} \\
		& = \begin{multlined}[t]
			\integ[\R^2]{ \abs{\ft f(\xi)}^2 \abs{\ft\Scalfunc(\xi)}^2}{d\xi} + \integ[\R^2]{ \abs{\ft f(\xi)}^2 \sum_{j \in \Nzero} \abs{\Corofunc( 2^{-2j}\xi )}^2 \times \\
			\times \bigg[ \indset{\Cone{\hor}}(\xi) \underbrace{\sum_{\abs{l} \leq \lmax} \abs[auto]{\conefunc\opleft( \lmax \tfrac{\xi_2}{\xi_1} - l \opright)}^2}_{\stackrel{\eqref{eq:shearlets:corosupp}}{=} 1}
			+ \indset{\Cone{\ver}}(\xi) \underbrace{\sum_{\abs{l} \leq \lmax} \abs[auto]{\conefunc\opleft( \lmax \tfrac{\xi_1}{\xi_2} - l \opright)}^2}_{\stackrel{\eqref{eq:shearlets:corosupp}}{=} 1} \bigg]}{d\xi}
		\end{multlined} \\
		& = \integ[\R^2]{ \abs{\ft f(\xi)}^2 \underbrace{\bigg[ \abs{\ft\Scalfunc(\xi)}^2 + \sum_{j \in \Nzero} \abs{\Corofunc( 2^{-2j}\xi )}^2 \bigg]}_{\stackrel{\eqref{eq:shearlets:corotiling}}{=} 1}}{d\xi} = \Lebnorm{\ft f}^2 = \Lebnorm{f}^2.
	\end{align}
	This finishes the proof.\qedhere
\end{proof}

\subsection{Proofs of Decay Lemmas from Subsection \ref{subsec:imginp:clusteredsparse}}
\label{subsec:proofs:coeffdecay}

The approaches of the following two proofs are significantly different from each other: The first result, which is Lemma~\ref{lem:imginp:coeffdecay}, covers the cases of shearlet elements which ``touch'' or ``overlap'' the singularity $\model_j$. This requires application of the precise definitions (which means working with the shearlet Fourier representations) and utilizing specific properties such as vanishing moments conditions. In contrast to this, the statement of Lemma~\ref{lem:imginp:spatialdecay} is a rather general geometric insight: Due to the rapid decay of the models and shearlet functions, the coefficients $\abs{\sp{\model_j}{\unish[\ver]{\aj}{j}{l}{k}}}$ will become small whenever
the ``distance'' of $\model_j$ and $\unish[\ver]{\aj}{j}{l}{k}$ is sufficiently large. Its proof is therefore essentially based on spatial estimates, and avoids any Fourier techniques.

\begin{proof}[Proof of Lemma \ref{lem:imginp:coeffdecay}]
\begin{romanlist}
\item
	Plancherel's Theorem and the definition of universal shearlets imply
	\begin{align}
		\sp[\Big]{\model_j}{\unish[\ver]{\aj}{j}{l}{k}}
		&= \sp[\Big]{\ft\model_j}{\unishft[\ver]{\aj}{j}{l}{k}}
		= \integ[\R^2]{\ft\weight(\xi_1) \Corofunc(2^{-2j}\xi) \conj{\unishft[\ver]{\aj}{j}{l}{k}(\xi)}}{d\xi} \\
		&= \integ[\R]{e^{2\pi i \translind{t_2}{\ver} \xi_2} \integ[\R]{\ft\weight(\xi_1) \underbrace{\Corofunc(2^{-2j}\xi) \unishft[\ver]{\aj}{j}{l}{0}(\xi)}_{=: \sigma_{j,l}(\xi)} e^{2\pi i \translind{t_1}{\ver} \xi_1} }{d\xi_1}}{d\xi_2}.
	\end{align}
	Next, we perform partial integrations, where $\xi\mapsto \ft\weight(\xi_1) \sigma_{j,l}(\xi)$ is differentiated and the modulation terms are integrated, respectively.
	Repeating this $\translind{N_i}{\ver}$-times for $i = 1, 2$ (recall that $\translind{N_i}{\ver} = 0$ when $\translind{t_i}{\ver} = 0$), we obtain
	\begin{equation} \label{eq:proofs:coeffdecay:partder}
		\abs[\Big]{\sp[\Big]{\model_j}{\unish[\ver]{\aj}{j}{l}{k}}}
		\leq C_{N_1,N_2} \abs[auto]{\translind{t_1}{\ver}}^{-\translind{N_1}{\ver}} \abs[auto]{\translind{t_2}{\ver}}^{-\translind{N_2}{\ver}} \integ[\R]{\underbrace{\integ[\R]{\abs[\Big]{\der{(\translind{N_1}{\ver},\translind{N_2}{\ver})}[\ft\weight(\xi_1)\sigma_{j,l}(\xi)] }}{d\xi_1}}_{=: h_{N_1,N_2}(\xi_2)}}{d\xi_2},
	\end{equation}
	where the boundary terms vanish due to the compact support of $\xi\mapsto \ft\weight(\xi_1) \sigma_{j,l}(\xi)$.
	From \eqref{eq:shearlets:unishdef:trapezoidsupport} (the roles of $\xi_1$ and $\xi_2$ of course need to be interchanged), we conclude for any $\xi \in \supp\sigma_{j,l}$ that
	\begin{equation} \label{eq:proofs:coeffdecay:supp}
		(l-1)2^{(\aj-2)j} \leq \frac{\xi_1}{\xi_2} \leq (l+1)2^{(\aj-2)j} \quad \text{ and } \quad
		2^{2j-4} \leq \abs{\xi_2} \leq 2^{2j-1}.
	\end{equation}
	The assumption $\abs{l} > 1$ now implies that there exist constants $C_1, C_2 > 0$ such that
	\begin{equation}
		\xi_1 \in I_{j,l} := \intvcl{-C_2 l 2^{\aj j}}{-C_1 l2^{\aj j}} \union \intvcl{C_1 l 2^{\aj j}}{C_2 l 2^{\aj j}}.
	\end{equation}
	The Leibniz rule with respect to $\xi_1$ yields
	\begin{equation}
		\der{(\translind{N_1}{\ver},\translind{N_2}{\ver})}[\ft\weight(\xi_1)\sigma_{j,l}(\xi)] = \sum_{n_1 = 0}^{\translind{N_1}{\ver}} \binom{\translind{N_1}{\ver}}{n_1} \ft\weight^{(n_1)}(\xi_1) \der{(\translind{N_1}{\ver}-n_1,\translind{N_2}{\ver})} \sigma_{j,l}(\xi), \quad \xi \in \R^2,
	\end{equation}
	and, combined with Hölder's inequality, we obtain (recall the definition of $h_{N_1,N_2}$ from \eqref{eq:proofs:coeffdecay:partder})
	\begin{align}
		h_{N_1,N_2}(\xi_2)
		&\leq \sum_{n_1 = 0}^{\translind{N_1}{\ver}} \binom{\translind{N_1}{\ver}}{n_1} \integ[\R]{\abs[\Big]{\ft\weight^{(n_1)}(\xi_1) \der{(\translind{N_1}{\ver}-n_1,\translind{N_2}{\ver})} \sigma_{j,l}(\xi)}}{d\xi_1} \\
		&\leq \sum_{n_1 = 0}^{\translind{N_1}{\ver}} \binom{\translind{N_1}{\ver}}{n_1} \Lebnorm{\ft\weight^{(n_1)}}[1][I_{j,l}] \Lebnorm{\der{(\translind{N_1}{\ver}-n_1,\translind{N_2}{\ver})} \sigma_{j,l}}[\infty][\R^2].
	\end{align}
	To estimate the first factor, we use the rapid decay of $\ft\weight$ as well as the specific form of $I_{j,l}$:
	\begin{align}
		\Lebnorm{\ft\weight^{(n_1)}}[1][I_{j,l}]
		&\leq \measure{I_{j,l}} \sup_{\xi_1 \in I_{j,l}} \abs{\ft\weight^{(n_1)} (\xi_1)}
		\leq C_{M,n_1} \cdot \abs{l 2^{\aj j}} \cdot \Schwartzpoly{l 2^{\aj j}}^{-(M+1)} \\
		&\leq C_{M,n_1} \Schwartzpoly{l 2^{\aj j}}^{-M}
		\stackrel{\abs{l} > 1}{\leq} C_{M,n_1} \Schwartzpoly{2^{\aj j}}^{-M}. \label{eq:proofs:coeffdecay:weight}
	\end{align}
	For the other term, we proceed similar to \eqref{eq:proofs:unishprop:partderhor} und \eqref{eq:proofs:unishprop:partderver} and additionally use \eqref{eq:proofs:coeffdecay:supp}:
	\begin{align}
		2^{(2+\aj)j/2}\abs[auto]{\partder{\sigma_{j,l}}{\xi_1} (\xi)}
		&= \abs[auto]{\partder{}{\xi_1} \left[ \Corofunc^2( 2^{-2j}\xi )\conefunc\opleft( 2^{(2-\aj)j} \frac{\xi_1}{\xi_2} - l \opright) \right]} \\*
		&\leq C_1 2^{-2j} + C_2 \frac{2^{(2-\aj)j}}{\abs{\xi_2}}
		\stackrel{\aj \leq 2}{\leq} C 2^{-\aj j}, \\
		2^{(2+\aj)j/2}\abs[auto]{\partder{\sigma_{j,l}}{\xi_2} (\xi)}
		&= \abs[auto]{\partder{}{\xi_2} \left[ \Corofunc^2( 2^{-2j}\xi )\conefunc\opleft( 2^{(2-\aj)j} \frac{\xi_1}{\xi_2} - l \opright) \right]} \\*
		&\leq C_1 2^{-2j} + C_2 2^{(2-\aj)j} \frac{\abs{\xi_1}}{\abs{\xi_2}^2}
		\leq C 2^{-\aj j}.
	\end{align}
	By induction, we obtain
	\begin{equation}
		\Lebnorm{\der{(\translind{N_1}{\ver}-n_1,\translind{N_2}{\ver})} \sigma_{j,l}}[\infty][\R^2]
		\leq C_{N_1,N_2,n_1} 2^{-(2+\aj)j/2} 2^{-(\translind{N_1}{\ver} - n_1)\aj j} 2^{-\translind{N_2}{\ver}\aj j}.
	\end{equation}
	This implies
	\begin{align}
		h_{N_1,N_2}(\xi_2) & \leq \sum_{n_1 = 0}^{\translind{N_1}{\ver}} \binom{\translind{N_1}{\ver}}{n_1} C_{N_1,N_2,n_1,M} \Schwartzpoly{2^{\aj j}}^{-M} 2^{-(2+\aj)j/2} 2^{-(\translind{N_1}{\ver} - n_1)\aj j} 2^{-\translind{N_2}{\ver}\aj j} \\
		& \leq C_{N_1,N_2,M} \Schwartzpoly{2^{\aj j}}^{-M} 2^{-(2+\aj)j/2} 2^{-\translind{N_2}{\ver}\aj j} \underbrace{\sum_{n_1 = 0}^{\translind{N_1}{\ver}} \binom{\translind{N_1}{\ver}}{n_1} \left( 2^{-\aj j} \right)^{\translind{N_1}{\ver} - n_1}}_{= (1 + 2^{-\aj j})^{\translind{N_1}{\ver}} \leq 2^{\translind{N_1}{\ver}}} \\
		& \leq C_{N_1,N_2,M} \Schwartzpoly{2^{\aj j}}^{-M} 2^{-(2+\aj)j/2} 2^{-\translind{N_2}{\ver}\aj j},
	\end{align}
	where we made use of the assumption $\aj \geq 0$ to estimate $(1 + 2^{-\aj j})^{\translind{N_1}{\ver}}$.
	Since $\measure{\supp h_{N_1,N_2}} \leq C 2^{2j}$, a final estimate gives
	\begin{align}
		\abs[\Big]{\sp[\Big]{\model_j}{\unish[\ver]{\aj}{j}{l}{k}}}
		& \leq C_{N_1,N_2} \abs[auto]{\translind{t_1}{\ver}}^{-\translind{N_1}{\ver}} \abs[auto]{\translind{t_2}{\ver}}^{-\translind{N_2}{\ver}} \integ[\R]{h_{N_1,N_2}(\xi_2)}{d\xi_2} \\
		& \leq C_{N_1,N_2,M} \abs[auto]{\translind{t_1}{\ver}}^{-\translind{N_1}{\ver}} \abs[auto]{\translind{t_2}{\ver}}^{-\translind{N_2}{\ver}} \Schwartzpoly{2^{\aj j}}^{-M} 2^{(2-\aj)j/2} 2^{-\translind{N_2}{\ver}\aj j}.
	\end{align}
\item
	This proof works analogously to the one of \ref{lem:imginp:coeffdecay:ver}.
	Basically, only the intervals $I_{j,l}$ need to be modified slightly.
	Due to $\abs{\xi_1} \in \asympeq{2^{2j}}$ for any $\xi \in \supp\unishft[\hor]{\aj}{j}{l}{k}$, they have the form
	\begin{equation}
		\intvcl{-C_2 2^{2 j}}{-C_1 2^{2 j}} \union \intvcl{C_1 2^{2 j}}{C_2 2^{2 j}}, \quad C_1, C_2 > 0.
	\end{equation}
	This leads to the different factor $\Schwartzpoly{2^{2 j}}^{-M}$ in \ref{lem:imginp:coeffdecay:hor}, compared with \ref{lem:imginp:coeffdecay:ver}.
\item
	Recalling the definition of the boundary shearlets, one can easily verify this estimate by a combination of part \ref{lem:imginp:coeffdecay:ver} and \ref{lem:imginp:coeffdecay:hor}.\qedhere
\end{romanlist}
\end{proof}

It is worth noting that the crucial point of the proof above is the specific form of the intervals $I_{j,l}$, which is due to the ``geometry'' of $\supp\unishft[\dir]{\aj}{j}{l}{k}$.
Interestingly, the fact that $0$ is not contained in $I_{j,l}$, precisely corresponds to the fact that all moments of $\unish[\ver]{\aj}{j}{l}{k}$ vanish with respect to $x_2$ (at least when $\abs{l} > 1$).

\begin{proof}[Proof of Lemma \ref{lem:imginp:spatialdecay}]
At first, we provide appropriate decay estimates of our edge model,
\begin{align} \label{eq:proofs:spatialdecay:modeldecay}
	\abs{\model_j(x)} ={} & \abs{[\model \conv \Corofilter_j](x)} = \abs[\Big]{\integ[\R]{\weight(y_1) F_j(x - (y_1, 0))}{dy_1}} \\
	\leq{} & \integ[\R]{\abs{\weight(y_1)} 2^{4j} \underbrace{\abs{\ift{\Corofunc}(2^{2j}(x - (y_1, 0)))}}_{\mathclap{\leq C_N \Schwartzpoly{2^{2j} x_2}^{-N} \Schwartzpoly{2^{2j} (y_1 - x_1)}^{-N}}}}{dy_1} = C_N 2^{4j} \Schwartzpoly{2^{2j} x_2}^{-N} \underbrace{[\abs{\weight} \conv \Schwartzpoly{2^{2j} \dotarg}^{-N}](x_1)}_{= \tilde\weight_{N,j}(x_1)} \\
	={} & C_N  2^{4j} \Schwartzpoly{2^{2j} x_2}^{-N} \tilde\weight_{N,j}(x_1), \quad x = (x_1, x_2) \in \R^2,
\end{align}
as well as of the shearlet elements,
\begin{align}
	\abs{\unish[\ver]{\aj}{j}{l}{k}(x)} &\leq C_N 2^{(\aj + 2)j/2} \Schwartzpoly{\shearcone{\ver}^{l}\pscalcone{\aj}{\ver}^j x - k}^{-N} \\
	&\leq C_N 2^{(\aj + 2)j/2} \Schwartzpoly{2^{\aj j}x_1 - k_1}^{-N} \Schwartzpoly{l 2^{\aj j} x_1 + 2^{2 j}x_2 - k_2}^{-N}, \quad x = (x_1, x_2) \in \R^2.
\end{align}
With this, we can estimate the analysis coefficients:
\begin{align}\label{eq:proofs:spatialdecay:integest}
	\abs[\Big]{\sp[\Big]{\model_j}{\unish[\ver]{\aj}{j}{l}{k}}} \leq{} & C_N \integ[\R^2]{ 2^{4j} \Schwartzpoly{2^{2j} x_2}^{-N} \tilde\weight_{N,j}(x_1) 2^{(\aj + 2)j/2} \Schwartzpoly{2^{\aj j}x_1 - k_1}^{-N} \Schwartzpoly{l 2^{\aj j} x_1 + 2^{2 j}x_2 - k_2}^{-N}}{dx} \\
	={} & C_N 2^{(5-\aj)j/2} \integ[\R^2]{\Schwartzpoly{x_2}^{-N} \tilde\weight_{N,j}(2^{-\aj j}(x_1+k_1)) \Schwartzpoly{x_1}^{-N} \Schwartzpoly{l x_1 + x_2 + lk_1 - k_2}^{-N}}{dx}. 
\end{align}
Now, let us consider the product $\Schwartzpoly{x_2}^{-N} \Schwartzpoly{x_2 + (l x_1 + lk_1 - k_2)}^{-N}$.
One of the two factors (which one it is might depend on $x_2$) clearly has to be smaller than $\Schwartzpoly{\frac{1}{2}(l x_1 + lk_1 - k_2)}^{-N}$.
This yields ($K := l x_1 + lk_1 - k_2$)
\begin{align} \label{eq:proofs:spatialdecay:splitargument}
	\integ[\R]{\Schwartzpoly{x_2}^{-N} \Schwartzpoly{x_2 + K}^{-N}}{dx_2} ={}& \integ[\R]{\underbrace{\max\{\Schwartzpoly{x_2}^{-N}, \Schwartzpoly{x_2+K}^{-N}\}}_{\leq \Schwartzpoly{x_2}^{-N} + \Schwartzpoly{x_2+K}^{-N}} \underbrace{\min\{\Schwartzpoly{x_2}^{-N}, \Schwartzpoly{x_2+K}^{-N}\}}_{\leq \Schwartzpoly{\frac{K}{2}}^{-N}}}{dx_2} \\
	\leq{}& C_N \Schwartzpoly{K}^{-N} = C_N \Schwartzpoly{l x_1 + lk_1 - k_2)}^{-N}. 
\end{align}
Putting this into \eqref{eq:proofs:spatialdecay:integest}, we finally obtain
\begin{equation}
	\abs[\Big]{\sp[\Big]{\model_j}{\unish[\ver]{\aj}{j}{l}{k}}} \leq C_N 2^{(5-\aj)j/2} \integ[\R]{\tilde\weight_{N,j}(2^{-\aj j}(x_1+k_1)) \Schwartzpoly{x_1}^{-N} \Schwartzpoly{l x_1 + lk_1 - k_2)}^{-N}}{dx_1}.
\end{equation}
\end{proof}

\nocite{*}
\bibliographystyle{amsplain}
\bibliography{inpainting-paper.bib}

\end{document}